\newtheorem{theorem}{Theorem}[section]
\newtheorem{corollary}[theorem]{Corollary}
\newtheorem{lemma}[theorem]{Lemma}
\newtheorem{proposition}[theorem]{Proposition}
\theoremstyle{definition}
\newtheorem{definition}[theorem]{Definition}
\theoremstyle{remark}
\newtheorem*{example}{Example}
\numberwithin{equation}{section}
\begin{document}

%
%
%
%
%
%
%
%
%

\title[Ideals of Graph Homomorphisms]{Ideals of Graph Homomorphisms}

\author{Alexander Engstr\"om}

\address{%
Aalto University\\
Department of Mathematics\\
PO Box 11100\\
FI-00076 Aalto\\
Finland}

\email{alexander.engstrom@aalto.fi}

\author{Patrik Nor\'en}

\address{%
Aalto University\\
Department of Mathematics\\
PO Box 11100\\
FI-00076 Aalto\\
Finland}

\email{patrik.noren@aalto.fi}

\thanks{Alexander Engstr\"om gratefully acknowledges support from the Miller Institute for Basic Research in Science at UC Berkeley. Patrik Nor\'en gratefully acknowledges support from the Wallenberg foundation.}

\subjclass{Primary 05C60; Secondary 68W30, 13P25, 13P10, 62H17}

\keywords{Graph homomorphisms, toric ideals, Gr\"obner bases, algebraic statistics, structural graph theory}

\date\today

\begin{abstract}
In combinatorial commutative algebra and algebraic statistics many toric ideals are constructed from graphs. Keeping the categorical structure of graphs in mind we give previous results a more functorial context and generalize them by introducing the ideals of graph homomorphisms. For this new class of ideals we investigate how the topology of the graphs influence the algebraic properties. We describe explicit Gr\"obner bases for several classes, generalizing results by Hibi, Sturmfels and Sullivant. One of our main tools is the toric fiber product, and we employ results by Engstr\"om, Kahle and Sullivant. The lattice polytopes defined by our ideals include important classes in optimization theory, as the stable set polytopes. 
\end{abstract}

\maketitle

\section{Introduction}

In this paper we introduce the ideals of graph homomorphisms. They are natural generalizations of toric ideals studied in particular in combinatorial commutative algebra and algebraic statistics. The lattice polytopes associated to them are important in optimization theory, and we can derive results on graph colorings with these ideals. Many toric ideals in the literature are defined from graphs, but usually the categorical structure is lost in the translation. Defining the objects from graph homomorphisms provide functorial constructions for free, as in homological algebra.

\subsection{A short overview of the paper}
 For every pair of graphs $G$ and $H$ the graph homomorphisms from $G$ to $H$ defines a toric ideal $I_{G\rightarrow H}$. In Section~\ref{sec:idealsOfGraphHomomorphisms} we give a proper definition of ideals of graph homomorphisms $I_{G\rightarrow H}$.
We give examples and explain how they relate to previously studied toric ideals, in particular from algebraic statistics. Some basic properties are proved, with focus on how modifications of the graphs $G$ and $H$ change the ideal of graph homomorphisms from $G$ to $H$. 

The toric fiber product introduced by Sullivant~\cite{sullivant2007} and further developed by Engstr\"om, Kahle, and Sullivant~\cite{engstromKahleSullivant2011} is explained in the context of ideals of graph homomorphisms in Section~\ref{sec:glue}.
If the intersection of two graphs $G_1$ and $G_2$ is sufficiently well-behaved with regard to a target graph $H$, this allows us to
lift algebraic properties and bases of $I_{G_1 \rightarrow H}$ and $I_{G_2 \rightarrow H}$ to $I_{G_1\cup G_2 \rightarrow H} = I_{G_1 \rightarrow H} \times_{G_1 \cap G_2 \rightarrow H} I_{G_2 \rightarrow H}$. We apply this to understand $I_{G\rightarrow H}$ for $G$ in some graph classes.

In Section~\ref{sec:normal} we review results on normality, and how to use the toric fiber product of the previous section to lift normality from particular graphs to complete classes. For example, we show that if the semigroup associated to $I_{K_3 \rightarrow H}$ is normal, then so are the ones associated to $I_{G \rightarrow H}$ when $G$ is a maximal outerplanar graph.

The independent sets of a graph $G$ are indexed by the graph homomorphism from $G$ to the graph $\downspoon$ with two adjacent vertices and one loop. In Section~\ref{sec:Independent} we study the ideals $I_{G\rightarrow \downspoon}$. First we give a convenient multigrading that is crucial for later proofs. For many families of toric ideals in algebraic statistics it is known, or conjectured, that the largest degree of an element in a minimal Markov basis is even. We show by explicit constructions that also odd degrees appear for $I_{G\rightarrow \downspoon}$. Then we derive a quadratic square-free Gr\"obner basis for $I_{G\rightarrow \downspoon}$ when $G$ is a bipartite graph, and this shows that they are normal and Cohen-Macaulay.

Following Section 8, we extend our results from bipartite graphs to graphs that become bipartite after the removal of some vertex. This is much more technically challenging.
 
Our toric ideals define lattice polytopes $P_{G \rightarrow H}$. In Section~\ref{sec:polytopes} we first study how modifications of $H$ gives faces of $P_{G \rightarrow H}$. Then we show how one of the most important classes of polytopes in optimization theory, the stable set polytopes \cite{lovasz2003}, appear naturally as isomorphic to some of our polytopes.
 
In Section~\ref{sec:ASL} we show that Hibi's algebras with a straightening law from distributive lattices \cite{hibi1987} is isomorphic to a graded part of some particular ideals of graph homomorphisms. Hibi's results on normality, Cohen-Macaulayness, and Koszulness, follows right off from our much larger class.

The ideal of graph homomorphisms whose target graph is a complete graph, $I_{G \rightarrow K_n}$, is an algebraic structure on the $n$-colorings of a graph $G$. In Section~\ref{sec:Colorings} we show how the ideals can be used to give structural information about graph colorings.

But we start off by introducing some notation from toric geometry and algebraic statistics in Section~\ref{sec:toricAlgebraicStatistics}, and a short overview of the category of graphs in Section~\ref{sec:graphHomo}.

\section{Toric geometry in algebraic statistics}\label{sec:toricAlgebraicStatistics}
The toric ideals studied in this paper are closely connected to those in algebraic statistics. While the methods from any textbook on combinatorial commutative algebra, like Miller and Sturmfels~\cite{millerSturmfels2005}, is enough to parse most algebraic statements of this paper, we want to point out some notions and particularities of algebraic statistics. For a nice introduction to this area we recommend the lectures on algebraic statistics by Drton, Sturmfels and Sullivant~\cite{drtonSturmfelsSullivant2009}.

We fix a field $\mathbf{k}$ throughout the paper. Two equivalent ways to define a toric ideals are used: For a matrix $A=(a_{ij}) \in \mathbb{Z}^{kl}_{\geq 0} $ and a polynomial ring $R=\mathbf{k}[r_1,r_2, \ldots, r_l],$ the toric ideal $I_A$ is generated by the binomials $r^{\mathbf{u}}-r^{\mathbf{v}} = r_1^{u_1}r_2^{u_2} \cdots r_l^{u_l} - r_1^{v_1}r_2^{v_2} \cdots r_l^{v_l}$ for which $A\mathbf{u}=A\mathbf{v}$. Alternatively we could have defined $I_A=I_\Phi$ as the kernel of the map $\Phi$ from $R$ to $S=\mathbf{k}[s_1,s_2, \ldots, s_k]$ defined by $\Phi(r_j)=s_{1}^{a_{1j}}s_{2}^{a_{2j}}\cdots s_{k}^{a_{kj}}$. This toric ideal cuts out a toric variety denoted $X_A$ or $X_\Phi$.

For each monomial $m$ in $S$, the \emph{fiber} of $m$ is the set of monomials in $R$ mapped to $m$ by $\Phi$. For any monomials $m'$ and $m''$ in the same fiber, there is a binomial $b$ in the toric ideal $I_A$ and a monomial $n$ in $R$ such that $m'-m''= \pm bn$. If $\mathcal{B}$ is a \emph{Markov basis} (that is, a generating set) of $I_A$, then there is a sequence of monomials
\[ m'=m_1,m_2,\ldots,m_{t-1},m_{t}=m'' \]
in the fiber such that $m_i-m_{i+1}=\pm b_{i}n_{i}$ for some binomial generator $b_i$ in $\mathcal{B},$ and monomial $n_i$ in $R$, for all $1\leq i < t$. The step from $m_i$ to $m_{i+1}$ in the sequence is referred to as a \emph{Markov step} or a \emph{Markov move}. 

There is a graph structure on the fiber of the monomial $m$. This graph $\mathcal{F}_m$ has the monomials of the fiber as vertices, and they are adjacent if there is a Markov step between them. A set of binomials is a basis if and only if every fiber graph is connected. We often view the Markov steps as having a direction imposed by the basis. If a basis $\mathcal{B}$ is constructed with an order $m' \rightarrow m''$ for each binomial $m'-m'' \in \mathcal{B}$, then this imposes an order on each Markov steps, and turns the fiber graphs directed. A \emph{sink} in a directed graph has all edges directed towards it, and in a \emph{directed acyclic graph} there are no cycles with the edges directed in consecutive order. If all fiber graphs of $\mathcal{B}$ are connected, directed acyclic, and have a unique sink, then $\mathcal{B}$ is a \emph{Gr\"obner basis}.

The \emph{degree} of a basis $\mathcal{B}$ is the maximal degree of a binomial in it. The \emph{Markov width} of a toric ideal $I$, denoted $\mu(I)$, is the minimal degree of a basis $\mathcal{B}$ of $I$. The Markov width is an important invariant of $I$ and a good complexity measure. When the toric ideals are given by graphs, it is a central question how the topological and structural properties of the graphs are reflected in the Markov width~\cite{engstromKahleSullivant2011}.

\section{The category of graphs}~\label{sec:graphHomo}
The reader is invited to recall basic graph theory from Diestel \cite{diestel2010}. A \emph{loop} is an edge attached to only one vertex. Most our graphs are simple or with loops, but never with multiple or weighted edges. We get the graph $G^\circ$ from $G$ by attaching loops to all its vertices.  Although we sometimes have loops, the complement of a graph without loops, is the ordinary complement without loops. The \emph{symmetric difference} $A\Delta B$ of two sets $A$ and $B$ contains the elements that are in exactly one of $A$ and $B$. For an integer $d$ the set $[d]$ is $\{1,2,\ldots, d\}$.  The \emph{neighborhood} $N(v)$ is the set of vertices adjacent to $v$ in a graph $G$ (excluding loops), and $N(S)=(\cup_{v\in S} N(v)) \setminus S$ for any set $S$ of vertices.  The induced subgraph of $G$ on vertex set $S$ is denoted $G[S]$. A particular case of this, is for any edge $e$ of $G$, the graph $G[e]$ is the subgraph of $G$ only containing the edge $e$. The \emph{independence 
target graph} $\downspoon$ in Figure~\ref{fig:independenceTarget} will appear frequently in later parts of the paper. 
\begin{figure}
\center
\includegraphics[scale=0.5]{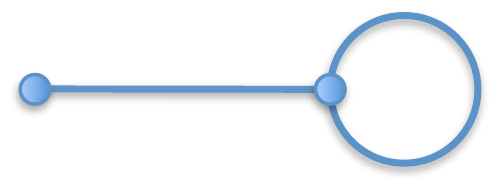}
\caption{The graph $\downspoon.$}\label{fig:independenceTarget}
\end{figure}

\begin{definition}\label{def:graphHomomorphism}
A \emph{graph homomorphism} from a graph $G$ to a graph $H$ is a function $\phi$ from the vertex set of $G$ to the vertex set of $H$ that induces a function from the edge set of $G$ to the edge set of $H$. A more formal way of stating it is that
\[ \phi:V(G)\rightarrow V(H)\]
satisfy
\[ uv\in E(G)\Rightarrow \phi(u)\phi(v)\in E(H). \]
\end{definition}
The set of graph homomorphisms from $G$ to $H$ is denoted $\mathrm{Hom}(G,H)$.  A \emph{graph isomorphism} from $G$ to $H$ is a graph homomorphism $\phi$ from $G$ to $H$ such that $\phi$ is a bijection between $V(G)$ and $V(H)$, and the inverse of $\phi$ is a graph homomorphism from $H$ to $G$. If such a map exists then $G$ and $H$ are said to be \emph{isomorphic} and can be considered to be the same.  

The following trivial facts are useful, and they capture the first aspect of why defining ideals from sets of graph homomorphisms might be a structural theory.

\begin{lemma}\label{lemma:graphHomomorphismsComposes}
Let $G_1, G_2,$ and  $G_3$ be graphs. If $\phi_1 \in \mathrm{Hom}(G_1,G_2)$ and 
$\phi_2 \in \mathrm{Hom}(G_2,G_3)$ then
$\phi_2 \circ \phi_1 \in \mathrm{Hom}(G_1,G_3)$.
\end{lemma}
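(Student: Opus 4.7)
The statement is essentially the assertion that graphs form a category with graph homomorphisms as morphisms, and the proof amounts to unwinding Definition~\ref{def:graphHomomorphism} twice. The plan is to verify the defining implication directly on an arbitrary edge of $G_1$, using the edge-preserving property of $\phi_1$ to transport the edge into $G_2$, and then the edge-preserving property of $\phi_2$ to transport it further into $G_3$.

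Concretely, I would first note that $\phi_2 \circ \phi_1$ is a well-defined function $V(G_1) \to V(G_3)$, since the codomain of $\phi_1$ is $V(G_2)$, which is the domain of $\phi_2$. Then I would fix an arbitrary edge $uv \in E(G_1)$. Applying Definition~\ref{def:graphHomomorphism} to $\phi_1$ gives $\phi_1(u)\phi_1(v) \in E(G_2)$. Applying the same definition to $\phi_2$, with $\phi_1(u)$ and $\phi_1(v)$ in place of $u$ and $v$, then yields $\phi_2(\phi_1(u))\phi_2(\phi_1(v)) \in E(G_3)$. Rewriting this as $(\phi_2 \circ \phi_1)(u)(\phi_2 \circ \phi_1)(v) \in E(G_3)$ completes the verification.

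There is no real obstacle: the only mild subtlety to keep in mind is that Definition~\ref{def:graphHomomorphism} treats a loop as an edge of the form $vv$, so the argument above automatically handles loops of $G_1$ provided the image edge (which may itself be a loop in $G_2$ or $G_3$) is also regarded as a genuine edge under the definition. Since the definition is stated uniformly in terms of edges of the form $uv$ without excluding $u=v$, the same two lines cover the loop case without modification.
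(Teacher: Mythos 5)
Your proof is correct and is essentially the same argument as the paper's (the paper simply states tersely that $\phi_2 \circ \phi_1$ induces the composite map $E(G_1) \to E(G_2) \to E(G_3)$ on edge sets). You have just unwound this one-line observation into an explicit verification on an arbitrary edge, with a helpful remark about loops.
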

\begin{proof}
The map $\phi_2 \circ \phi_1$ induces to edge sets $E(G_1) \rightarrow E(G_2) \rightarrow E(G_3)$.
\end{proof}

The graph $H_1$ is a subgraph of $H_2$, denoted $H_1 \subseteq H_2$, if $V(H_1)\subseteq V(H_2)$ and $E(H_1) \subseteq E(H_2).$

\begin{lemma}\label{lemma:expandTarget}
Let $G,H_1,H_2$ be graphs. If $V(H_1)=V(H_2)$ and $H_1 \subseteq H_2$ then $\mathrm{Hom}(G,H_1) \subseteq 
\mathrm{Hom}(G,H_2).$
\end{lemma}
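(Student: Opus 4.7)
The plan is to unpack Definition~\ref{def:graphHomomorphism} and observe that the conclusion is essentially immediate: every graph homomorphism from $G$ to $H_1$ already \emph{is} a function $V(G)\to V(H_2)$, because the vertex sets agree, and the edge condition for $H_2$ is weaker than the one for $H_1$.

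More precisely, I would take an arbitrary $\phi \in \mathrm{Hom}(G,H_1)$, and verify the two clauses of Definition~\ref{def:graphHomomorphism} with target $H_2$. First, since $V(H_1)=V(H_2)$, the function $\phi\colon V(G)\to V(H_1)$ is literally the same data as a function $\phi\colon V(G)\to V(H_2)$. Second, for any edge $uv\in E(G)$, homomorphism into $H_1$ gives $\phi(u)\phi(v)\in E(H_1)$, and since $E(H_1)\subseteq E(H_2)$ by the hypothesis $H_1\subseteq H_2$, we get $\phi(u)\phi(v)\in E(H_2)$. Hence $\phi\in\mathrm{Hom}(G,H_2)$, which proves the asserted inclusion.

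There is essentially no obstacle here; the lemma is a direct unfolding of definitions and is included to record a functoriality property that will be used tacitly later when modifying target graphs. The only mild subtlety worth flagging is that the hypothesis $V(H_1)=V(H_2)$ is genuinely needed: without it, the codomain of $\phi$ would not automatically land in $V(H_2)$, so the statement would require composition with an inclusion rather than an outright set-theoretic containment of $\mathrm{Hom}(G,H_1)$ in $\mathrm{Hom}(G,H_2)$.
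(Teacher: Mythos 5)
Your proof is correct, but it takes a slightly different route from the paper's. You verify the definition of graph homomorphism directly: since $V(H_1)=V(H_2)$, the map $\phi$ is already a function into $V(H_2)$, and the edge condition follows from $E(H_1)\subseteq E(H_2)$. The paper instead invokes Lemma~\ref{lemma:graphHomomorphismsComposes}: it observes that the identity map on vertices is a graph homomorphism $\phi_2\colon H_1\to H_2$ (precisely because $H_1\subseteq H_2$ with the same vertex set), and then writes $\phi_1 = \phi_2\circ\phi_1\in\mathrm{Hom}(G,H_2)$. Both arguments are one-liners and equally rigorous. The paper's phrasing is chosen deliberately to emphasize the categorical viewpoint announced in the introduction --- inclusions of graphs are themselves morphisms, and post-composition with them gives the desired containment for free --- whereas yours is the more elementary ``unfold the definition'' proof. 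Your observation that the hypothesis $V(H_1)=V(H_2)$ is what makes the containment a literal set-theoretic one (rather than an induced map) is a fair point and aligns with how the paper actually uses the lemma, e.g.\ in Lemma~\ref{lemma:ringExpandTarget}.
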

\begin{proof}
Take any $\phi_1 \in \mathrm{Hom}(G,H_1)$, and set  $\phi_2(v)=v$ for $\phi_2 \in \mathrm{Hom}(H_1,H_2)$. Then use 
Lemma~\ref{lemma:graphHomomorphismsComposes}
to conclude that $\phi_1 = \phi_2 \circ \phi_1 \in \mathrm{Hom}(G,H_2)$.
\end{proof}
\begin{lemma}\label{lemma:reduceSource}
Let $G_1,G_2,H$ be graphs. If $V(G_1)=V(G_2)$ and $G_1 \subseteq G_2$ then $\mathrm{Hom}(G_1,H) \supseteq 
\mathrm{Hom}(G_2,H).$
\end{lemma}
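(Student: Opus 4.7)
The plan is to mirror the proof of Lemma~\ref{lemma:expandTarget}, just on the other side of the composition. The key observation is that the identity map on the common vertex set $V(G_1)=V(G_2)$ is itself a graph homomorphism from $G_1$ into $G_2$: for any edge $uv\in E(G_1)$, the hypothesis $G_1\subseteq G_2$ gives $uv\in E(G_2)$, so the edge condition of Definition~\ref{def:graphHomomorphism} is satisfied by the identity.

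Given this, I would take an arbitrary $\phi_2\in\mathrm{Hom}(G_2,H)$, let $\phi_1\in\mathrm{Hom}(G_1,G_2)$ be the identity on vertices just constructed, and invoke Lemma~\ref{lemma:graphHomomorphismsComposes} to conclude that $\phi_2\circ\phi_1\in\mathrm{Hom}(G_1,H)$. Since $\phi_1$ is the identity on vertices, this composition equals $\phi_2$ as a set-map $V(G_1)=V(G_2)\to V(H)$, which means $\phi_2$ itself lies in $\mathrm{Hom}(G_1,H)$, giving the desired inclusion $\mathrm{Hom}(G_2,H)\subseteq\mathrm{Hom}(G_1,H)$.

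A fully direct alternative, which is essentially the unpacking of the above, is to note that any $\phi\in\mathrm{Hom}(G_2,H)$ is by definition a map $V(G_2)\to V(H)$, hence a map $V(G_1)\to V(H)$, and the edge condition over $E(G_1)\subseteq E(G_2)$ is inherited immediately from the edge condition over $E(G_2)$. There is no real obstacle: the statement is the straightforward contravariant analogue of Lemma~\ref{lemma:expandTarget}, and the only thing to be careful about is writing the inclusion in the right direction (smaller source gives larger hom-set).
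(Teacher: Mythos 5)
Your proof is correct and takes essentially the same approach as the paper: both use the identity (inclusion) map as an element of $\mathrm{Hom}(G_1,G_2)$ and compose with it via Lemma~\ref{lemma:graphHomomorphismsComposes}. You additionally spell out why the identity is a graph homomorphism and offer a direct unpacking, but these are just elaborations of the same argument.
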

\begin{proof}
Let $\phi_1(v)=v$ be the inclusion map in $\mathrm{Hom}(G_1,G_2)$, and take any
$\phi_2 \in  \mathrm{Hom}(G_2,H)$. Then
by Lemma~\ref{lemma:graphHomomorphismsComposes}, $\phi_2 = \phi_2 \circ \phi_1 \in \mathrm{Hom}(G_1,H)$.
\end{proof}
If $G$ is a graph and $S$ a subset of $V(G)$, then any map $\phi: V(G) \rightarrow V(H)$ can be restricted to a map
$\phi|_S : S=V(G[S]) \rightarrow V(H)$. In particular, a graph homomorphisms $\phi : G \rightarrow H$ restricts to a
graph homomorphism $\phi|_S: G[S] \rightarrow H$. For future reference we state this as a trivial lemma without proof.
\begin{lemma}\label{lemma:induceGraphHomomorphisms}
If $G,H$ are graphs, $S \subseteq V(G)$, and $\phi \in \mathrm{Hom}(G,H)$, then $ \phi|_S \in  \mathrm{Hom}(G[S],H).$
\end{lemma}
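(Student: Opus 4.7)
The plan is essentially a one-line verification, but it admits two natural formulations, and I would briefly indicate both before picking one.

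First, the direct route. Unwind Definition~\ref{def:graphHomomorphism} applied to $G[S]$: pick any edge $uv \in E(G[S])$. By definition of induced subgraph, $u,v \in S$ and $uv \in E(G)$. Because $\phi \in \mathrm{Hom}(G,H)$, we have $\phi(u)\phi(v) \in E(H)$. Since $\phi|_S$ agrees with $\phi$ on $S$, the pair $\phi|_S(u)\phi|_S(v)$ is the same edge of $H$, which verifies the defining implication $uv \in E(G[S]) \Rightarrow \phi|_S(u)\phi|_S(v) \in E(H)$.

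Alternatively, and more in the functorial spirit of the section, I would factor $\phi|_S$ as a composition. The inclusion map $\iota: V(G[S]) \hookrightarrow V(G)$ is itself a graph homomorphism $G[S] \to G$, precisely because every edge of the induced subgraph $G[S]$ is by construction an edge of $G$. Then $\phi|_S = \phi \circ \iota$, and Lemma~\ref{lemma:graphHomomorphismsComposes} immediately yields $\phi|_S \in \mathrm{Hom}(G[S],H)$.

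There is no real obstacle here; the authors themselves flag the statement as trivial. The only minor point of care is that Lemma~\ref{lemma:reduceSource} cannot be invoked as a black box, since it requires $V(G_1) = V(G_2)$, which fails for the inclusion $G[S] \subseteq G$ whenever $S \subsetneq V(G)$. That is why I would prefer the composition argument via Lemma~\ref{lemma:graphHomomorphismsComposes}, or the one-line direct verification, rather than trying to reuse Lemma~\ref{lemma:reduceSource}.
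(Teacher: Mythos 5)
Your proof is correct, and the paper in fact states this lemma explicitly ``without proof,'' offering only the informal remark that a graph homomorphism restricts to a graph homomorphism on any induced subgraph --- precisely the content of your direct verification. Your second, functorial route (factoring $\phi|_S = \phi \circ \iota$ through the inclusion $\iota: G[S] \hookrightarrow G$ and invoking Lemma~\ref{lemma:graphHomomorphismsComposes}) is a nice parallel to the proofs of Lemmas~\ref{lemma:expandTarget} and~\ref{lemma:reduceSource}, which also proceed by composing with an identity-type map; it buys a uniform treatment of all three lemmas via a single composition principle. Your observation that Lemma~\ref{lemma:reduceSource} cannot be applied as a black box --- because it demands $V(G_1)=V(G_2)$, which fails when $S \subsetneq V(G)$ --- is correct and is exactly the reason this statement is recorded separately rather than derived from the earlier lemma.
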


For more about graph homomorphisms, see the textbook by Hell and Ne\v set\v ril \cite{hellNesetril2005}.

\section{Ideals of Graph Homomorphisms}\label{sec:idealsOfGraphHomomorphisms}

In this section we define and prove the basic properties of the ideals of graph homomorphisms.  These ideals are toric ideals defined as kernels, and we first define rings and a map. 

\begin{definition}\label{def:ringOfGraphHomomorphisms}
For any graphs $G$ and $H$,
the \emph{ring of graph homomorphisms from $G$ to $H$} is the polynomial ring
\[ 
R_{G\rightarrow H} = \mathbf{k}\left[ r_\phi \mid  \phi : G \rightarrow H \mathrm{\ is\ a\ graph\ homomorphism} \right],
\]
and the \emph{ring of edge maps from $G$ to $H$} is the polynomial ring
\[
S_{G\rightarrow H} = \mathbf{k}\left[ s_\phi \mid e\in E(G) \mathrm{\ and\ } \phi : G[e] \rightarrow H \mathrm{\ is\ a\ graph\ homomorphism} \right].
\]
\end{definition}
For every $s_\phi$ the domain of the graph homomorphism $\phi$ is a graph consisting of one edge and its vertices. By Lemma~\ref{lemma:induceGraphHomomorphisms} the following edge separator map is well defined and a ring homomorphism.
\begin{definition}
For graphs $G$ and $H$ the \emph{edge separator map}
\[ \Phi_{G \rightarrow H} : R_{ G \rightarrow H } \rightarrow S_{ G \rightarrow H}
\]
is defined by
\[
 \Phi(r_\phi) = \prod_{e\in E(G)} s_{\phi|_e}.
\]
\end{definition}
Now we are ready to define the object of our study.
\begin{definition}
For graphs $G$ and $H$ the \emph{ideal of graph homomorphisms from $G$ to $H$}, $I_{G \rightarrow H}$, is the kernel of the
edge separator map $\Phi_{G \rightarrow H}$.
\end{definition}
The corresponding toric variety is denoted $X_{G \rightarrow H}$, and the lattice polytope $P_{G \rightarrow H}$.
\begin{example}
The hierarchical model with variables taking $n$ discrete values modeled on a graph $G$, is a common statistical model that is studied in algebraic statistics. Recall that $K_n^\circ$ is the complete graph on $n$ vertices with loops on all vertices. The hierarchical model is the special case of ideals of graph homomorphisms $I_{G \rightarrow K_n^\circ}$. Many properties and examples of these models have been studied. Develin and Sullivant~\cite{develinSullivant2003}  studied the Markov width of binary graph models and constructed Markov bases of degree four for the binary graph models when the graphs are cycles and complete bipartite graphs $K_{2,n}$.  Ho\c{s}ten and Sullivant~\cite{Serkan} gave Gr\"obner basis for the binary graph model when the graph is a cycle and found a complete facet description of the underlying polytope. Another common model in statistics is the graphical models, they are associated to the ideals $I_{G\rightarrow K_n^\circ}$ if $G$ does not contain a triangle. Hierarchical models are defined in terms of simplicial complexes, the case when the complex is the clique complex of a graph is called graphical. Geiger, Meek and Sturmfels~\cite{geiger} found conditions for when a statistical model is graphical. Kahle~\cite{Kahle} found a neighborliness property of the underlying polytope for hierarchical models. Likelihood estimation for hierarchical models is studied for example in~\cite{diaconisSturmfels1998,Dobra,Eriksson}.

\end{example}

\begin{example}
An independent set of a graph is a set of non-adjacent vertices. Another name for independents sets are stable sets, and most of the important graph theoretic concepts and problems can be stated as properties of them. Recall that the graph on two vertices with one edge and one loop is denoted $\downspoon$. Every independent set of a graph $G$ can be described as a graph homomorphism from $G$ into $\downspoon$ where the independent set is the pre-image of the vertex without a loop.
The ideal of graph homomorphisms $I_{G \rightarrow \downspoon}$, or the
\emph{ideal of independent sets}, is an important special case that we will return to in Section~\ref{sec:Independent}. The polytope associated
to $I_{G \rightarrow \downspoon}$ is isomorphic to the \emph{stable set polytope}, which is important in optimization theory.
\end{example}

\begin{example}
A \emph{graph coloring} is an assignment of colors to the vertices of a graph with no adjacent vertices getting the same color. The minimal number of colors, the \emph{chromatic number}, is an important invariant of a graph. An upper bound for the chromatic number can easily be achieved by giving an explicit coloring, but to prove that a certain number of colors is indeed needed, is much more difficult. There are many simplistic ways to associate algebraic structures to graphs in attacking this problem, but the only successful ones so far makes heavy use of the underlying category of graphs and their homomorphisms \cite{BabsonKozlov2007}. A graph coloring of a graph $G$ with $n$ vertices is nothing but a graph homomorphism from $G$ to the complete graph $K_n$. In Section~\ref{sec:Colorings} we will show how ideals of graph homomorphisms 
$I_{G \rightarrow K_n}$ can be used in the study of graph colorings.
\end{example}

\begin{example}\label{example:p3toP4}
Let $G$ be the path on the four vertices 1,2,3,4; and $H$ the path on the three vertices 1,2,3; as in Figure~\ref{figure:p3toP4}. The variable of the ring of graph homomorphisms $R_{P_4 \rightarrow P_3}$ corresponding to a graph homomorphism
$\phi : P_4 \rightarrow P_3$ is called $r_{ \phi(1)\phi(2)\phi(3)\phi(4) }$.
The variables are
$r_{1232},r_{1212},r_{2121},r_{2123},r_{2321},r_{2323},$ $r_{3212},r_{3232},$
and the ideal of graph homomorphisms $I_{P_4 \rightarrow P_3}$ is generated by
$r_{1212}r_{3232}-r_{1232}r_{3212}$ and  $r_{2121}r_{2323}-r_{2123}r_{2321}.$
\end{example}
\begin{figure}
\center
\includegraphics[scale=0.5]{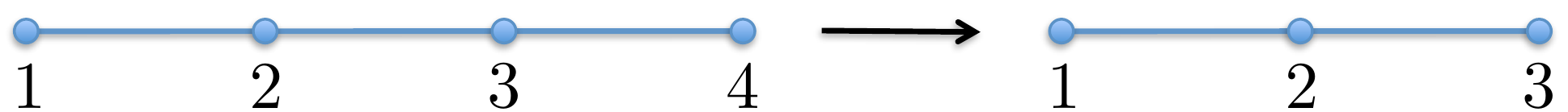}
\caption{The domain $P_4$ and target $P_3$ of the graph homomorphisms defining $I_{P_4\rightarrow P_3}$.}\label{figure:p3toP4}
\end{figure}

If $G$ have isolated vertices then $I_{G \rightarrow H}$ contains lots of uninteresting quadratic binomials. Most graphs we study lack isolated vertices, but we don't restrict to that case. If you change the source or target for a set of graph homomorphisms, it is also reflected in their rings.

\begin{lemma}\label{lemma:ringExpandTarget}
Let $G,H_1,H_2$ be graphs. If $V(H_1)=V(H_2)$ and $H_1 \subseteq H_2$ then $R_{G \rightarrow H_1} \subseteq R_{G \rightarrow H_2}$.
\end{lemma}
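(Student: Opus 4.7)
The plan is to unpack Definition~\ref{def:ringOfGraphHomomorphisms} and reduce the ring inclusion to the set inclusion already established in Lemma~\ref{lemma:expandTarget}. Concretely, by Definition~\ref{def:ringOfGraphHomomorphisms},
\[ R_{G \rightarrow H_i} = \mathbf{k}\left[ r_\phi \mid \phi \in \mathrm{Hom}(G,H_i) \right] \]
for $i=1,2$, so each ring is a polynomial ring whose variables are indexed by the set of graph homomorphisms from $G$ into the corresponding target.

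First I would invoke Lemma~\ref{lemma:expandTarget}: the hypotheses $V(H_1)=V(H_2)$ and $H_1 \subseteq H_2$ are exactly what that lemma needs, and it yields $\mathrm{Hom}(G,H_1) \subseteq \mathrm{Hom}(G,H_2)$. Hence the indexing set of variables for $R_{G \rightarrow H_1}$ is a subset of the indexing set of variables for $R_{G \rightarrow H_2}$. The natural interpretation of the inclusion symbol here is the canonical embedding of $R_{G \rightarrow H_1}$ as the $\mathbf{k}$-subalgebra of $R_{G \rightarrow H_2}$ generated by the variables $r_\phi$ with $\phi \in \mathrm{Hom}(G,H_1)$; under this identification each generator on the left hand side literally equals a generator on the right hand side, giving the claim.

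There is essentially no obstacle: the argument is a one-line consequence of Lemma~\ref{lemma:expandTarget} together with the definition of the polynomial ring. The only thing one should be careful about is clarifying the meaning of $\subseteq$ between polynomial rings whose variable sets differ, and that is handled by the canonical inclusion just described.
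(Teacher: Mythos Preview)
Your proposal is correct and follows exactly the paper's own proof, which simply cites Lemma~\ref{lemma:expandTarget} and Definition~\ref{def:ringOfGraphHomomorphisms}. Your added remark about the meaning of the inclusion as the canonical $\mathbf{k}$-subalgebra embedding is a helpful clarification but does not depart from the paper's argument.
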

\begin{proof}
Use Lemma~\ref{lemma:expandTarget} and Definition~\ref{def:ringOfGraphHomomorphisms}.
\end{proof}

\begin{lemma}\label{lemma:ringReduceSource}
Let $G_1,G_2,H$ be graphs. If $V(G_1)=V(G_2)$ and $G_1 \subseteq G_2$ then $R_{G_1 \rightarrow H} \supseteq R_{G_2 \rightarrow H}$.
\end{lemma}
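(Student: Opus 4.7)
The plan is to mirror the proof of Lemma~\ref{lemma:ringExpandTarget} almost verbatim, only replacing Lemma~\ref{lemma:expandTarget} by its source-side analog Lemma~\ref{lemma:reduceSource}. Recall that by Definition~\ref{def:ringOfGraphHomomorphisms} the ring $R_{G_i \rightarrow H}$ is the polynomial ring whose variables are indexed by the set $\mathrm{Hom}(G_i, H)$, so inclusions of rings of graph homomorphisms are literally recorded by inclusions of the underlying homomorphism sets.

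First I would invoke Lemma~\ref{lemma:reduceSource} with the hypothesis $V(G_1) = V(G_2)$ and $G_1 \subseteq G_2$ to conclude $\mathrm{Hom}(G_1, H) \supseteq \mathrm{Hom}(G_2, H)$: every graph homomorphism out of the larger graph $G_2$ restricts trivially (the identity on vertices is a homomorphism $G_1 \to G_2$) to one out of $G_1$. Next, since the variable sets of $R_{G_1 \rightarrow H}$ and $R_{G_2 \rightarrow H}$ are exactly these two homomorphism sets, I get $\{r_\phi \mid \phi \in \mathrm{Hom}(G_2, H)\} \subseteq \{r_\phi \mid \phi \in \mathrm{Hom}(G_1, H)\}$. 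The polynomial ring on the smaller variable set embeds canonically into the polynomial ring on the larger variable set, which gives the desired containment $R_{G_1 \rightarrow H} \supseteq R_{G_2 \rightarrow H}$.

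There is no real obstacle; the content of the statement is purely bookkeeping about how the source functor behaves contravariantly on graph inclusions at fixed vertex set, dual to the covariant behavior on the target already recorded in Lemma~\ref{lemma:ringExpandTarget}. The only subtle point worth flagging is that the inclusion $R_{G_2 \rightarrow H} \hookrightarrow R_{G_1 \rightarrow H}$ is induced by the identity on the common variables $r_\phi$ for $\phi \in \mathrm{Hom}(G_2, H)$, not by any nontrivial map, so the statement is an honest subring inclusion rather than merely a surjection or quotient relation.
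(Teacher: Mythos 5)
Your proof is correct and follows exactly the paper's one-line argument: apply Lemma~\ref{lemma:reduceSource} to get $\mathrm{Hom}(G_1,H) \supseteq \mathrm{Hom}(G_2,H)$ and then read off the ring inclusion from Definition~\ref{def:ringOfGraphHomomorphisms}. You simply spell out the bookkeeping that the paper leaves implicit.
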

\begin{proof}
Use Lemma~\ref{lemma:reduceSource} and Definition~\ref{def:ringOfGraphHomomorphisms}.
\end{proof}

Ordinarily we don't want to expand the target of our graph homomorphisms as in Lemma~\ref{lemma:ringExpandTarget}, but to move around in subrings where the target is reduced. This is handled in Lemma~\ref{lemma:ringShrinkTarget}.

\begin{lemma}\label{lemma:ringShrinkTarget}
Let $G,H_1,H_2$ be graphs with $V(H_1)=V(H_2)$ and $H_1 \subseteq H_2$; and let $m,n$ be monomials in $R_{G \rightarrow H_2}$. If $m-n \in I_{G \rightarrow H_2}$ then either both  $m,n \in R_{G \rightarrow H_1}$ or both $m,n \not \in R_{G \rightarrow H_1}$.
\end{lemma}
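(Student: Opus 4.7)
The plan is to show that for a monomial $m \in R_{G \rightarrow H_2}$, membership in $R_{G \rightarrow H_1}$ is detected entirely by the image $\Phi_{G \rightarrow H_2}(m) \in S_{G \rightarrow H_2}$. Once this is established, the conclusion is immediate because $m - n \in I_{G \rightarrow H_2} = \ker \Phi_{G \rightarrow H_2}$ forces $\Phi_{G\rightarrow H_2}(m) = \Phi_{G\rightarrow H_2}(n)$.

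First I would observe that under the hypothesis $V(H_1) = V(H_2)$ and $H_1 \subseteq H_2$, Lemma~\ref{lemma:expandTarget} gives $\mathrm{Hom}(G, H_1) \subseteq \mathrm{Hom}(G, H_2)$, and similarly $\mathrm{Hom}(G[e], H_1) \subseteq \mathrm{Hom}(G[e], H_2)$ for every edge $e$ of $G$. Therefore $R_{G \rightarrow H_1}$ sits inside $R_{G \rightarrow H_2}$ as the subring generated by exactly those $r_\phi$ with $\phi \in \mathrm{Hom}(G, H_1)$, and analogously $S_{G \rightarrow H_1} \subseteq S_{G \rightarrow H_2}$.

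Next I would prove the characterization: a monomial $m \in R_{G \rightarrow H_2}$ lies in $R_{G \rightarrow H_1}$ if and only if the monomial $\Phi_{G \rightarrow H_2}(m)$ lies in $S_{G \rightarrow H_1}$. The forward direction is straightforward from Lemma~\ref{lemma:induceGraphHomomorphisms}: if every $r_\phi$ in $m$ has $\phi \in \mathrm{Hom}(G, H_1)$, then each restriction $\phi|_e$ lies in $\mathrm{Hom}(G[e], H_1)$, so every factor $s_{\phi|_e}$ of $\Phi(r_\phi) = \prod_{e \in E(G)} s_{\phi|_e}$ belongs to $S_{G \rightarrow H_1}$. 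For the backward direction, suppose every $s$-variable appearing in $\Phi_{G \rightarrow H_2}(m)$ lies in $S_{G \rightarrow H_1}$. Fix any $r_\phi$ appearing in $m$; then for each edge $e \in E(G)$ the variable $s_{\phi|_e}$ appears in $\Phi_{G \rightarrow H_2}(m)$, hence $\phi|_e \in \mathrm{Hom}(G[e], H_1)$, i.e., $\phi(u)\phi(v) \in E(H_1)$ for the endpoints $u,v$ of $e$. As this holds for every edge of $G$ and $V(H_1) = V(H_2)$, we conclude $\phi \in \mathrm{Hom}(G, H_1)$, so $r_\phi \in R_{G \rightarrow H_1}$ and thus $m \in R_{G \rightarrow H_1}$.

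Finally, combining these pieces: since $\Phi_{G \rightarrow H_2}(m) = \Phi_{G \rightarrow H_2}(n)$, the two monomials $\Phi(m)$ and $\Phi(n)$ are literally equal in $S_{G \rightarrow H_2}$ and therefore simultaneously lie in $S_{G \rightarrow H_1}$ or simultaneously do not; by the characterization, the same is then true for $m$ and $n$ in $R_{G \rightarrow H_1}$. The main (and essentially only) content is the backward direction of the characterization, where one has to rule out that a spurious $s_{\phi|_e}$ might appear from cancellations across different $r_\phi$-factors — but since $\Phi$ sends each $r_\phi$ to a monomial in the $s_\psi$ with nonnegative exponents, no cancellation is possible when computing $\Phi(m)$ for a single monomial $m$, so the argument goes through cleanly.
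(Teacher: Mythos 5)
Your proof is correct and takes essentially the same approach as the paper's: both rest on the observation that membership of a monomial $m$ in $R_{G\rightarrow H_1}$ is detected by which $s$-variables appear in $\Phi_{G\rightarrow H_2}(m)$, and then use $\Phi_{G\rightarrow H_2}(m)=\Phi_{G\rightarrow H_2}(n)$. The paper phrases this as a contrapositive, tracing a single ``bad'' edge $e$ with $\phi(e)\in E(H_2)\setminus E(H_1)$ from a variable $r_\phi$ dividing $m$ to a variable $r_{\phi'}$ dividing $n$; you instead package the same content as a clean biconditional characterization ($m\in R_{G\rightarrow H_1}\iff\Phi_{G\rightarrow H_2}(m)\in S_{G\rightarrow H_1}$), which is slightly more systematic but not a genuinely different route.
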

\begin{proof}
By symmetry of $m$ and $n$, we only have to prove that if $m\not\in R_{G \rightarrow H_1}$ then $n\not\in R_{G \rightarrow H_1}$.
Assume that $m\not\in R_{G \rightarrow H_1}$ since for some graph homomorphism 
$\phi \in \mathrm{Hom}(G,H_1) \setminus \mathrm{Hom}(G,H_2)$ the variable $r_\phi$ divides $m$. This is certified by an edge
$e$ of $G$ mapped to $\phi(e)$ in $H_2\setminus H_1$. The images of $n$ and $m$ under $\Phi_{G \rightarrow H_2}$ are the same,
so there is a graph homomorphism $\phi' : G \rightarrow H_2$ such that $r_{\phi'}$ divides $n,$ and $\phi'$ sends $e$ to 
$\phi(e) \in H_2\setminus H_1$. This shows that $r_{\phi'}$ and hence $n$ is not in $R_{G \rightarrow H_1}$.
\end{proof}

\begin{theorem}\label{theorem:restrictMarkovBases}
Let $G,H_1,H_2$ be graphs with $V(H_1)=V(H_2)$ and $H_1 \subseteq H_2$. 
If $\mathcal{B}$ is a basis of $I_{G\rightarrow H_2}$, then $\mathcal{B} \cap I_{G \rightarrow H_1}$ is a basis of 
$I_{G\rightarrow H_1}$.
\end{theorem}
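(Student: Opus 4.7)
The plan is to show that an arbitrary binomial of $I_{G\rightarrow H_1}$ can be connected by Markov moves from $\mathcal{B}\cap I_{G\rightarrow H_1}$, by tracking how a Markov sequence from $\mathcal{B}$ in the larger ring stays confined to the smaller one. First I would record the basic observation that $I_{G\rightarrow H_1}=I_{G\rightarrow H_2}\cap R_{G\rightarrow H_1}$: the edge separator map $\Phi_{G\rightarrow H_1}$ is literally the restriction of $\Phi_{G\rightarrow H_2}$ to the subring $R_{G\rightarrow H_1}\subseteq R_{G\rightarrow H_2}$ guaranteed by Lemma~\ref{lemma:ringExpandTarget}, since for any monomial $r_\phi$ with $\phi\in\mathrm{Hom}(G,H_1)$ the product $\prod_{e\in E(G)} s_{\phi|_e}$ already lies in $S_{G\rightarrow H_1}$. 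Consequently two monomials of $R_{G\rightarrow H_1}$ lie in the same fiber of $\Phi_{G\rightarrow H_1}$ precisely when they lie in the same fiber of $\Phi_{G\rightarrow H_2}$.

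For the main argument, it suffices to show that for every pair of monomials $m,n\in R_{G\rightarrow H_1}$ in a common fiber of $\Phi_{G\rightarrow H_1}$, there is a sequence of Markov moves from $\mathcal{B}\cap I_{G\rightarrow H_1}$ connecting $m$ to $n$. Start with a Markov sequence
\[ m=m_1,m_2,\ldots,m_{t-1},m_t=n \]
in the $\Phi_{G\rightarrow H_2}$-fiber, coming from the basis $\mathcal{B}$ of $I_{G\rightarrow H_2}$, so that $m_i-m_{i+1}=\pm b_i n_i$ for binomials $b_i\in\mathcal{B}$ and monomials $n_i\in R_{G\rightarrow H_2}$. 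Since $m_1=m\in R_{G\rightarrow H_1}$, an inductive application of Lemma~\ref{lemma:ringShrinkTarget} to each step $m_i-m_{i+1}\in I_{G\rightarrow H_2}$ forces every $m_i$ to lie in $R_{G\rightarrow H_1}$.

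It remains to verify that each $b_i$ is actually in $\mathcal{B}\cap I_{G\rightarrow H_1}$. Writing $b_i=u_i-v_i$ and (up to sign) $m_i=u_in_i$, $m_{i+1}=v_in_i$, the fact that $m_i\in R_{G\rightarrow H_1}$ forces every variable dividing $u_in_i$ to correspond to a homomorphism into $H_1$; likewise from $m_{i+1}$ we conclude $v_i\in R_{G\rightarrow H_1}$. Thus $b_i\in R_{G\rightarrow H_1}\cap I_{G\rightarrow H_2}=I_{G\rightarrow H_1}$, so $b_i\in\mathcal{B}\cap I_{G\rightarrow H_1}$, and the sequence is a Markov sequence in $I_{G\rightarrow H_1}$. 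The only step that requires genuine work is the monomial-confinement argument, and this is precisely what Lemma~\ref{lemma:ringShrinkTarget} was prepared for; once that lemma is in hand, the result follows by a bookkeeping induction along the Markov path.
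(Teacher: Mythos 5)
Your proof is correct and takes essentially the same approach as the paper: fix a Markov path in $I_{G\rightarrow H_2}$ between two monomials of $R_{G\rightarrow H_1}$ in a common fiber, and use Lemma~\ref{lemma:ringShrinkTarget} inductively along the path to show all intermediate monomials (hence all the basis elements $b_i$ used) remain inside $R_{G\rightarrow H_1}$. The preliminary observation that $I_{G\rightarrow H_1}=I_{G\rightarrow H_2}\cap R_{G\rightarrow H_1}$ is a small extra clarification that the paper leaves implicit, but the core bookkeeping induction is identical.
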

\begin{proof}
If $m$ and $m'$ are monomials in $R_{G \rightarrow H_2 }$ and $m-m' \in I_{G \rightarrow H_1}$, then there are monomials
$m=m_0,m_1, \ldots, m_k = m'$ such that
\[ m_1-m_0,\,  m_2-m_1,\, \ldots, m_k-m_{k-1} \in I_{G \rightarrow H_2 }, \]
and each binomial $m_i - m_{i-1}$ equals some $m_i'b_i$ where $m_i$ is a monomial in $R_{G \rightarrow H_2 }$ and $b_i$
is a binomial in $\mathcal{B}$. We want to show that each $b_i$ is in $\mathcal{B} \cap  I_{G \rightarrow H_1}$ to prove
that $I_{G \rightarrow H_1} =  \langle \mathcal{B} \cap I_{G \rightarrow H_1} \rangle$. To do this we find that each $m_i'b_i$ is in $R_{G \rightarrow H_1}$.

We assumed that $m-m'\in I_{G \rightarrow H_1}$, so in particular $m=m_0 \in R_{G \rightarrow H_1}$. By 
Lemma~\ref{lemma:ringShrinkTarget} then also $m_1\in  R_{G \rightarrow H_1}$. Repeating the same argument, gives that all $m_i$, and there differences $m_i'b_i$, are in $R_{G \rightarrow H_1}$.
\end{proof}

\begin{corollary}\label{corollary:restrictMarkovBases}
If $G,H_1,H_2$ are graphs with $V(H_1)=V(H_2)$ and $H_1 \subseteq H_2$ then the Markov widths are related by
\[ \mu\left( I_{G \rightarrow H_1} \right) \leq \mu \left( I_{G \rightarrow H_2} \right). \]
\end{corollary}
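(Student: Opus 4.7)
The plan is to chase definitions: the Markov width is the minimal degree over all bases of the ideal, so it suffices to exhibit, for each basis $\mathcal{B}$ of $I_{G\rightarrow H_2}$, a basis of $I_{G\rightarrow H_1}$ whose degree is no larger. Theorem~\ref{theorem:restrictMarkovBases} hands us exactly such a construction: the subset $\mathcal{B}\cap I_{G\rightarrow H_1}$ is a basis of $I_{G\rightarrow H_1}$.

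Concretely, I would first fix a basis $\mathcal{B}$ of $I_{G\rightarrow H_2}$ realizing the minimum, so that the maximal degree of a binomial in $\mathcal{B}$ equals $\mu(I_{G\rightarrow H_2})$. By Theorem~\ref{theorem:restrictMarkovBases}, the set $\mathcal{B}' = \mathcal{B}\cap I_{G\rightarrow H_1}$ is a basis of $I_{G\rightarrow H_1}$. Since $\mathcal{B}'\subseteq \mathcal{B}$, every binomial in $\mathcal{B}'$ has degree at most $\mu(I_{G\rightarrow H_2})$, so the degree of $\mathcal{B}'$ (the maximum degree of a binomial in it) is at most $\mu(I_{G\rightarrow H_2})$. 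By definition of Markov width as an infimum,
\[
\mu(I_{G\rightarrow H_1}) \;\leq\; \deg(\mathcal{B}') \;\leq\; \deg(\mathcal{B}) \;=\; \mu(I_{G\rightarrow H_2}),
\]
which is the claim.

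There is essentially no obstacle here: the content is packed into Theorem~\ref{theorem:restrictMarkovBases}, and the corollary is a one-line consequence. The only minor point worth noting is the edge case where $I_{G\rightarrow H_1}$ is the zero ideal (e.g.\ $H_1$ has no edges and $G$ has edges, so $\mathrm{Hom}(G,H_1)$ may be empty or consist of nothing producing relations); in that case $\mu(I_{G\rightarrow H_1})=0$ under any reasonable convention and the inequality holds trivially. With this caveat addressed, the proof is complete.
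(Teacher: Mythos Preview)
Your proof is correct and follows essentially the same approach as the paper: pick a minimal-degree basis $\mathcal{B}$ of $I_{G\rightarrow H_2}$, apply Theorem~\ref{theorem:restrictMarkovBases} to get that $\mathcal{B}\cap I_{G\rightarrow H_1}$ is a basis of $I_{G\rightarrow H_1}$, and observe that its degree cannot exceed that of $\mathcal{B}$. The edge-case remark is a harmless addition but not needed for the argument.
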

\begin{proof}
Let  $\mathcal{B}$ be a degree $\mu( I_{G \rightarrow H_2})$ basis of $I_{G \rightarrow H_2}$. Restricting $\mathcal{B}$
to $I_{G \rightarrow H_1}$ gives a basis according to Theorem~\ref{theorem:restrictMarkovBases}, and that one is at most
of the same degree as $\mathcal{B}$.
\end{proof}

\section{Gluing together graphs}\label{sec:glue}

In structural graph theory it is studied how graph classes either can be defined by forbidden minors, or by being glued together from simple starting graphs~\cite{robertsonSeymour2004}. In algebraic statistics, when ideals are formed from graphs, one can ask if there is an operation on the level of ideals corresponding to gluing the graphs. The first algebraic result in this direction, collecting several scattered results and giving them a theoretical foundation, was obtained by Sullivant \cite{sullivant2007} when he defined the \emph{toric fiber product} and showed how to make use of it in the codimension zero case. In codimension one the first result was proved by Engstr\"om \cite{engstrom2011} and it was used to prove that cut ideals of $K_4$-minor free graphs are generated by quadratic square-free binomials, as conjectured by Sullivant and Sturmfels~\cite{sturmfelsSullivant2008}. The first systematic treatment of higher codimensions, with a clear connection to structural graph theory, was recently done by Engstr\"om, Kahle, and Sullivant \cite{engstromKahleSullivant2011}. In this section we use the toric fiber product to find generators of ideals of graph homomorphisms.

The integer matrix in the definition of a toric ideal can also be regarded as a configuration of integer vectors. For two vector configurations  $B=\{\mathbf{b}^i_j \mid i \in [r], j \in [s_i] \} \subset \mathbb{Z}^{d_1}_{\geq 0} $ and  $C=\{\mathbf{c}^i_j \mid i \in [r], j \in [t_i] \} \subset  \mathbb{Z}^{d_2}_{\geq 0}$ we get toric ideals in $\mathbf{k}[x_1,x_2,
\ldots, x_{d_1}]$ and  $\mathbf{k}[y_1,y_2, \ldots, y_{d_2}]$ defined by $I_B = \langle x^{\mathbf{u}} - x^{\mathbf{v}} \mid B\mathbf{u}=B\mathbf{v}   \rangle$ and $I_C = \langle y^{\mathbf{u}} - y^{\mathbf{v}} \mid C\mathbf{u}=C\mathbf{v} \rangle.$ Assume that there is a vector configuration $A=\{\mathbf{a}^1, \mathbf{a}^2, \ldots,   \mathbf{a}^r \} \subset  \mathbb{Z}^{e}_{\geq 0}$ and linear maps $\pi_l : \mathbb{Z}^{d_l} \rightarrow \mathbb{Z}^{e}$ satisfying $\pi_1(\mathbf{b}^i_j)= \mathbf{a}^i$ and $\pi_2(\mathbf{c}^i_j)= \mathbf{a}^i$ for all the vectors. Their
\emph{toric fiber product} is the toric ideal
\[ I_{B} \times_A I_C =  \langle z^{\mathbf{u}} - z^{\mathbf{v}} \mid  (B\times_A C)  \mathbf{u}= (B\times_A C) \mathbf{v}   \rangle   \]
in $\mathbf{k}[z_1,z_2, \ldots, z_{d_1+d_2}]$
where $(B\times_A C) = \{ ( \mathbf{b}^i_j, \mathbf{c}^i_k ) \mid i \in [r], j \in [s_i], k \in [t_i] \}.$

\begin{proposition} \label{prop:tfp}
Let $G_1, G_2$ and $H$ be graphs. If $G_1 \cap G_2$ is an induced subgraph of both $G_1$ and $G_2$ then
\[ I_{G_1 \rightarrow H} \times_{G_1\cap G_2 \rightarrow H} I_{G_2 \rightarrow H} = I_{G_1 \cup G_2 \rightarrow H}. \]
\end{proposition}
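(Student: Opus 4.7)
The plan is to identify the toric fiber product structure explicitly and then match columns (variables) and rows (edge-separator relations) between $I_{G_1 \cup G_2 \rightarrow H}$ and $I_{G_1 \rightarrow H} \times_{G_1 \cap G_2 \rightarrow H} I_{G_2 \rightarrow H}$. The bijection underlying the column identification is the natural one: a homomorphism $\phi \in \mathrm{Hom}(G_1 \cup G_2, H)$ restricts to $\phi_1 = \phi|_{G_1}$ and $\phi_2 = \phi|_{G_2}$ by Lemma~\ref{lemma:induceGraphHomomorphisms}, and these agree on $V(G_1 \cap G_2)$. Conversely, a pair $(\phi_1, \phi_2) \in \mathrm{Hom}(G_1, H) \times \mathrm{Hom}(G_2, H)$ that agrees on $V(G_1 \cap G_2)$ glues to a well-defined map on $V(G_1) \cup V(G_2) = V(G_1 \cup G_2)$ that is a homomorphism because every edge of $G_1 \cup G_2$ lies in $E(G_1) \cup E(G_2)$.

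Next I would set up the vector configurations. Take $A$ to be the columns of the matrix of $\Phi_{G_1 \cap G_2 \rightarrow H}$, indexed by $\psi \in \mathrm{Hom}(G_1 \cap G_2, H)$; take $B$ to be the columns of $\Phi_{G_1 \rightarrow H}$, grouped by the restriction $\phi_1 \mapsto \phi_1|_{G_1 \cap G_2}$; and similarly $C$ from $\Phi_{G_2 \rightarrow H}$. The projection $\pi_1$ is the coordinate projection from the ambient space of $B$ (indexed by pairs $(e, \alpha)$ with $e \in E(G_1)$ and $\alpha : G_1[e] \to H$) onto the ambient space of $A$ (indexed by analogous pairs for $E(G_1 \cap G_2)$), obtained by forgetting coordinates indexed by edges outside $G_1 \cap G_2$. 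The induced subgraph hypothesis guarantees $E(G_1 \cap G_2) \subseteq E(G_1)$ so this makes sense, and a direct check gives $\pi_1(\mathbf{b}^\psi_{\phi_1}) = \mathbf{a}^\psi$; symmetrically for $\pi_2$. So the toric fiber product $I_B \times_A I_C$ has variables indexed exactly by pairs $(\phi_1, \phi_2)$ with matching restrictions, which is the variable set of $I_{G_1 \cup G_2 \rightarrow H}$ under the bijection above.

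It remains to check equality of the defining lattices. Rows of the $I_{G_1 \cup G_2 \rightarrow H}$ matrix are indexed by pairs $(e, \alpha)$ with $e \in E(G_1 \cup G_2)$, and the row relation is $\sum_{\phi\,:\,\phi|_e = \alpha} v_\phi = 0$. Rows of $B \times_A C$ lie in the disjoint union of row sets for $B$ and $C$, giving analogous relations indexed by pairs with $e \in E(G_1)$ or $e \in E(G_2)$. For edges $e$ not in the overlap these match the $G_1 \cup G_2$ relations one-to-one. For $e \in E(G_1) \cap E(G_2)$, which by the induced-subgraph hypothesis is precisely $E(G_1 \cap G_2)$, both endpoints of $e$ lie in $V(G_1 \cap G_2)$; since the TFP fiber condition $\pi_1(\mathbf{b}) = \pi_2(\mathbf{c})$ forces $\phi_1$ and $\phi_2$ to agree on $V(G_1 \cap G_2)$, the $B$-relation and the $C$-relation for $(e, \alpha)$ coincide, so duplications collapse to a single constraint matching the one from $E(G_1 \cup G_2)$.

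The main obstacle is keeping the row-bookkeeping honest: one has to check that the TFP-imposed equality of restrictions to the base $A$ really translates into pointwise agreement of $\phi_1$ and $\phi_2$ on all of $V(G_1 \cap G_2)$, not merely on vertices incident to edges of $G_1 \cap G_2$. This is harmless if one indexes columns of $A$ combinatorially by $\psi \in \mathrm{Hom}(G_1 \cap G_2, H)$ rather than by the underlying integer vectors (which could repeat if $G_1 \cap G_2$ has isolated vertices), and this choice is the one that is natural in this context. Once this subtlety is handled, the equality of lattices, and hence of toric ideals, follows directly.
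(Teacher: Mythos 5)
Your proof is correct and follows essentially the same route as the paper's terse sketch: index the base configuration $A$ by $\mathrm{Hom}(G_1\cap G_2,H)$, let $\pi_1,\pi_2$ be the coordinate projections induced by restriction, verify the column bijection with $\mathrm{Hom}(G_1\cup G_2,H)$, and observe that the redundant rows for shared edges collapse. Your remark that the partition into $A$-classes must be taken combinatorially by $\psi\in\mathrm{Hom}(G_1\cap G_2,H)$ (not by the possibly-repeating integer vectors) is a genuine subtlety left implicit in the paper, and it is correctly resolved; the only very minor inaccuracy is that the two facts you credit to the induced-subgraph hypothesis (that $E(G_1\cap G_2)\subseteq E(G_1)$ and that $E(G_1)\cap E(G_2)=E(G_1\cap G_2)$) hold by the definition of graph intersection alone.
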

\begin{proof}
Let $A$ be the vector configuration defining the toric ideal $I_{G_1 \cap G_2 \rightarrow H}$. Any graph homomorphism $\phi:G_i \rightarrow H$ restricts to a graph homomorphism
$\phi | _{G_1\cap G_2} : G_1\cap G_2 \rightarrow H.$ This gives the linear $\pi$--maps from the vector configurations defining  $I_{G_1 \rightarrow H}$ and $I_{G_2 \rightarrow H}$ to $A$.
\end{proof}

When the subscript of $\times$ is clear, as it almost always is in our applications of the toric fiber product, then we drop it from the notation. The easiest toric fiber products to work with are when the vectors in $A$ are linearly independent, because then there is a procedure to get the basis of the product from the bases of the factors. We now describe this procedure of Sullivant~\cite{sullivant2007} in the context of ideals of graph homomorphism.

\begin{proposition} \label{prop:tfpMoves}
Let $\mathcal{B}_i$ be a generating set of $I_{G_i \rightarrow H}$ for $i=1,2,$ and let $A$ be the vector configuration defining $I_{G_1 \cap G_2 \rightarrow H}$. If $G_1 \cap G_2$ is an induced subgraph of both $G_1$ and $G_2$, and the vectors of $A$ are linearly independent, then
\[ Lift(\mathcal{B}_1) \cup Lift(\mathcal{B}_2) \cup Quad \]
is a generating set of  $I_{G_1 \rightarrow H} \times I_{G_2 \rightarrow H} = I_{G_1 \cup G_2 \rightarrow H}$, where $Lift(\mathcal{B}_1)$ is the set
\[ 
\left\{
\prod_{i=1}^d r_{\phi_i} - \prod_{i=1}^d r_{\phi_i'} \in R_{G_1\cup G_2 \rightarrow H} \left| 
\begin{array}{l}
\prod_{i=1}^d r_{\phi_i |_{G_1}} - \prod_{i=1}^d r_{\phi_i'|_{G_1}} \in \mathcal{B}_1 \\ 
\textrm{and }\phi_i |_{G_2}=\phi_i' |_{G_2}\textrm{ for all }i \\
\end{array}
\right.\right\},
\]
$Lift(\mathcal{B}_2)$ is the set
\[ 
\left\{
\prod_{i=1}^d r_{\phi_i} - \prod_{i=1}^d r_{\phi_i'} \in R_{G_1\cup G_2 \rightarrow H} \left| 
\begin{array}{l}
\prod_{i=1}^d r_{\phi_i |_{G_2}} - \prod_{i=1}^d r_{\phi_i'|_{G_2}} \in \mathcal{B}_2 \\
\textrm{and } \phi_i |_{G_1}=\phi_i' |_{G_1}\textrm{ for all }i
\end{array}
\right.\right\},
\]
and $Quad$ is the set
\[
\left\{ 0 \neq
r_{\phi_1}r_{\phi_2}-r_{\phi_3}r_{\phi_4} \in R_{G_1\cup G_2 \rightarrow H}
\left|
\begin{array}{ll}
\phi_1|_{G_1}= \phi_3|_{G_1}, &  \phi_1|_{G_2}= \phi_4|_{G_2}, \\
\phi_2|_{G_1}= \phi_4|_{G_1}, &  \phi_2|_{G_2}= \phi_3|_{G_2}. \\
\end{array}
\right.
\right\}.
\]
\end{proposition}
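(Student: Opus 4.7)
The plan is to recognize this as a direct application of the generating-set theorem for toric fiber products over linearly independent base configurations due to Sullivant~\cite{sullivant2007}, and to supply the dictionary that translates it to our setting. By Proposition~\ref{prop:tfp} we already know the equality of ideals $I_{G_1 \rightarrow H} \times I_{G_2 \rightarrow H} = I_{G_1 \cup G_2 \rightarrow H}$, where the $\pi$-maps from the configurations for $I_{G_1\rightarrow H}$ and $I_{G_2 \rightarrow H}$ to $A$ are induced by the restrictions $\phi\mapsto \phi|_{G_1\cap G_2}$. The variables $r_\phi$ of $R_{G_1\cup G_2\rightarrow H}$ are in bijection with pairs $(\phi_1,\phi_2)\in\mathrm{Hom}(G_1,H)\times\mathrm{Hom}(G_2,H)$ satisfying $\phi_1|_{G_1\cap G_2}=\phi_2|_{G_1\cap G_2}$, since $G_1\cap G_2$ is an induced subgraph of $G_1\cup G_2$ and so no edges of $G_1\cup G_2$ lie outside $E(G_1)\cup E(G_2)$.

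Once this dictionary is set up, I would first check that all three subsets of the candidate generating set lie inside $I_{G_1\cup G_2\rightarrow H}$. For elements of $\mathrm{Lift}(\mathcal{B}_1)$, the condition $\phi_i|_{G_2}=\phi_i'|_{G_2}$ forces equality of the images of the two monomials on every edge of $G_2$ (in particular on every edge of $G_1\cap G_2$), while equality on edges of $G_1\setminus G_2$ follows from the assumption $\prod r_{\phi_i|_{G_1}} - \prod r_{\phi_i'|_{G_1}} \in \mathcal{B}_1 \subseteq I_{G_1\rightarrow H}$; the argument for $\mathrm{Lift}(\mathcal{B}_2)$ is symmetric. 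For $Quad$ the four pairwise equalities directly give equality of the edge separator images on every edge of $G_1\cup G_2$.

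For the reverse containment, take a binomial $m-m'\in I_{G_1\cup G_2\rightarrow H}$ and analyse $\Phi_{G_1\cup G_2\rightarrow H}(m)=\Phi_{G_1\cup G_2\rightarrow H}(m')$. Projecting onto the edges of $G_1\cap G_2$ gives a common $A$-vector sum; because the vectors of $A$ are linearly independent, the multiset of restrictions $\phi|_{G_1\cap G_2}$ appearing among the variables dividing $m$ coincides with the one for $m'$. This allows me to pair up variables $r_\phi$ in $m$ with variables $r_{\phi'}$ in $m'$ that share the same $G_1\cap G_2$ restriction. Using $Quad$ moves I then re-pair, within $m$ alone and within $m'$ alone, the $G_1$-restrictions with the $G_2$-restrictions, so that after these moves the monomials have matching lists of $G_2$-restrictions. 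The residual difference only changes the $G_1$-restrictions and lies in the kernel after projection to $G_1$, hence is reducible by $\mathrm{Lift}(\mathcal{B}_1)$; the analogous cleanup on the $G_2$ side reduces by $\mathrm{Lift}(\mathcal{B}_2)$.

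The main obstacle is the combinatorial bookkeeping in the last paragraph: one has to verify that the $Quad$ moves suffice to permute pairings arbitrarily, so that the reduction can always be arranged in the order \emph{match} $\rightarrow$ $\mathrm{Lift}(\mathcal{B}_1)$ $\rightarrow$ $\mathrm{Lift}(\mathcal{B}_2)$. This is precisely the content of Sullivant's original argument~\cite{sullivant2007}, and the only thing we need to verify is that our $\pi$-maps and our indexing of $B$ and $C$ by the vectors representing $\phi_1$ and $\phi_2$ fit his hypotheses; given that check, the proposition follows.
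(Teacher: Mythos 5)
Your proposal is correct and takes essentially the same route as the paper: both reduce the statement to Sullivant's generating-set theorem for toric fiber products with a linearly independent base configuration (Corollary 14 in~\cite{sullivant2007}), and your dictionary identifying variables of $R_{G_1\cup G_2\rightarrow H}$ with compatible pairs of homomorphisms is exactly the translation the paper leaves implicit via Proposition~\ref{prop:tfp}. The paper's proof is a one-line citation, so your additional sketch of Sullivant's re-pairing argument is extra detail rather than a different method.
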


\begin{proof}
This is a direct application of Corollary 14 in \cite{sullivant2007}, where in a general context Quad is defined in Proposition 10 and Lift is defined in Definition 11. This setup is also discussed in \cite{engstromKahleSullivant2011} in a more general context.
\end{proof}

Our main use of the previous proposition is a natural extension of the similar results for hierarchical models.

\begin{lemma}\label{lemma:glue}
Let $G_1$ and $G_2$ be two graphs whose intersection is one of $ \emptyset, K_1,$ $K_1^\circ,$ or $K_2$; and let $H$ be a graph. Then 
\[\mu(I_{G_1 \cup G_2  \rightarrow H}) \leq \max ( 2, \mu( I_{G_1 \rightarrow H}) , \mu( I_{G_2 \rightarrow H} ) ).\]
\end{lemma}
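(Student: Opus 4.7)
The plan is to invoke Propositions~\ref{prop:tfp} and~\ref{prop:tfpMoves} in sequence. First, in each of the four cases for $G_1 \cap G_2$, the intersection is automatically an induced subgraph of both $G_1$ and $G_2$ (trivially for $\emptyset$, $K_1$, and $K_1^\circ$, which are determined by vertex sets of size at most one; and for $K_2$ because its single edge is the only edge its two vertices can support simply). So Proposition~\ref{prop:tfp} identifies $I_{G_1 \cup G_2 \rightarrow H}$ with the toric fiber product $I_{G_1 \rightarrow H} \times_{G_1 \cap G_2 \rightarrow H} I_{G_2 \rightarrow H}$.

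To apply Proposition~\ref{prop:tfpMoves}, one must verify that the vector configuration $A$ defining $I_{G_1 \cap G_2 \rightarrow H}$ has linearly independent vectors. For $G_1 \cap G_2 = K_2$, a graph homomorphism $\psi \in \mathrm{Hom}(K_2, H)$ picks out an oriented edge of $H$, and the edge separator map sends $r_\psi$ to the corresponding distinct variable $s_\psi$; hence $A$ consists of distinct standard basis vectors. The same reasoning handles $K_1^\circ$, with loops of $H$ playing the role of edges. The empty case is trivial since $\mathrm{Hom}(\emptyset, H)$ consists of a single vacuous homomorphism. The $K_1$ case is the one requiring care, since $K_1$ has no edges and the edge separator map is identically $1$; here the ``types'' must be distinguished by the image of the unique vertex, which is handled by the generalized toric fiber product framework of~\cite{engstromKahleSullivant2011}. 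In all four cases, the hypothesis of Proposition~\ref{prop:tfpMoves} is met.

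Once the proposition applies, take $\mathcal{B}_i$ to be a minimal generating set of $I_{G_i \rightarrow H}$ of degree $\mu(I_{G_i \rightarrow H})$. Since lifts preserve degree and the binomials in $Quad$ are all quadratic, the generating set $Lift(\mathcal{B}_1) \cup Lift(\mathcal{B}_2) \cup Quad$ of Proposition~\ref{prop:tfpMoves} has degree at most $\max(2, \mu(I_{G_1 \rightarrow H}), \mu(I_{G_2 \rightarrow H}))$, which gives the desired bound on $\mu(I_{G_1 \cup G_2 \rightarrow H})$. The main obstacle is thus the $K_1$ case, where a careful interpretation of the toric fiber product framework (tracking types separately from the literal columns of $A$) is required.
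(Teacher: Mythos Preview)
Your proof is correct and follows exactly the same route as the paper: apply Proposition~\ref{prop:tfp} to express $I_{G_1\cup G_2\rightarrow H}$ as a toric fiber product, check that the configuration $A$ for $I_{G_1\cap G_2\rightarrow H}$ has linearly independent vectors, then invoke Proposition~\ref{prop:tfpMoves} and read off the degree bound from $Lift(\mathcal{B}_i)$ and $Quad$. The paper's own proof simply asserts linear independence in all four cases without discussion; your flagging of the $K_1$ case (where the edge separator map is trivial and one must track types via the image of the single vertex rather than via the literal columns of $A$) is a legitimate refinement of the exposition rather than a different argument.
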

\begin{proof}
By Proposition~\ref{prop:tfp} the ideal of graph homomorphisms $I_{G_1 \cup G_2  \rightarrow H}$ is the toric fiber product $I_{G_1 \rightarrow H} \times I_{G_2 \rightarrow H}$. The vector configurations defining the toric ideals $I_{G_1 \cap G_2  \rightarrow H}$ are linearly independent if $G_1 \cap G_2$ is one of $ \emptyset, K_1, K_1^\circ, K_2$. We apply Proposition~\ref{prop:tfpMoves} to bound the Markov width. Let $\mathcal{B}_i$ be a generating set of $I_{G_i \rightarrow H}$ with binomials of degree at most $\mu(  I_{G_i \rightarrow H})$ for $i=1,2.$ By construction in Proposition~\ref{prop:tfpMoves} the binomials in $Lift(\mathcal{B}_i)$ are of degree at most $\mu(  I_{G_i \rightarrow H})$, 
the binomials in Quad are quadrics, and hence $\mu(I_{G_1 \cup G_2  \rightarrow H}) \leq \max ( 2, \mu( I_{G_1 \rightarrow H}) , \mu( I_{G_2 \rightarrow H} ) )$ since $ Lift(\mathcal{B}_1) \cup Lift(\mathcal{B}_2) \cup Quad $ generates $I_{G_1 \cup G_2  \rightarrow H}$.
\end{proof}

\begin{theorem}\label{theo:forest}
If $G$ is a forest then $I_{G\rightarrow H}$ is generated by square-free quadratic binomials.
\end{theorem}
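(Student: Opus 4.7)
The plan is to prove the statement by induction on $|E(G)|$, assuming throughout that $G$ has no isolated vertices (the only case with real content, as acknowledged in the paragraph preceding Lemma~\ref{lemma:ringExpandTarget}). At each step I use Proposition~\ref{prop:tfp} and Proposition~\ref{prop:tfpMoves} to decompose the ideal and produce a generating set whose square-freeness and quadraticity can be verified directly.

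For the base case $|E(G)| \leq 1$, either $G$ is empty or $G = K_2$. In the latter the edge separator map is the bijection $r_\phi \mapsto s_\phi$, so $I_{K_2 \rightarrow H} = 0$ and the claim holds vacuously. For the inductive step with $|E(G)| \geq 2$, I pick a leaf $u$ of $G$ with unique neighbor $v$ and edge $e = uv$. If $\deg_G(v) \geq 2$, I set $G_1 := G[\{u,v\}] \cong K_2$ and $G_2 := G \setminus u$, so $G_1 \cup G_2 = G$ and $G_1 \cap G_2 \cong K_1$ is an induced subgraph of both, while $G_2$ retains no isolated vertex. If $\deg_G(v) = 1$, the edge $e$ is an isolated component and I take $G_2 := G \setminus \{u,v\}$, giving $G_1 \cap G_2 = \emptyset$; again neither factor has an isolated vertex. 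In both cases Proposition~\ref{prop:tfp} yields $I_{G\rightarrow H} = I_{G_1\rightarrow H} \times I_{G_2\rightarrow H}$, and because $G_1 \cap G_2 \in \{\emptyset, K_1\}$ the associated vector configuration is linearly independent, so Proposition~\ref{prop:tfpMoves} applies.

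By the inductive hypothesis, $I_{G_2\rightarrow H}$ has a quadratic square-free generating set $\mathcal{B}_2$, while $\mathcal{B}_1 = \emptyset$ suffices since $I_{K_2 \rightarrow H} = 0$. It remains to check that $Lift(\mathcal{B}_2) \cup Quad$ consists of quadratic square-free binomials. For $Lift(\mathcal{B}_2)$, any lift $r_{\phi_1}r_{\phi_2} - r_{\phi_1'}r_{\phi_2'}$ of a square-free quadric in $\mathcal{B}_2$ inherits square-freeness, since $\phi_1|_{G_2} \neq \phi_2|_{G_2}$ already forces $\phi_1 \neq \phi_2$ (and symmetrically for the primed side). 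For $Quad$, suppose a nonzero $r_{\phi_1}r_{\phi_2} - r_{\phi_3}r_{\phi_4}$ had $\phi_1 = \phi_2$; then combining the four defining restrictions forces $\phi_3 = \phi_4$ and further $\phi_1 = \phi_3$, because all four maps then agree on both $G_1$ and $G_2$, which cover $G$. This contradicts the binomial being nonzero, so $\phi_1 \neq \phi_2$ and symmetrically $\phi_3 \neq \phi_4$, giving square-freeness.

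The main obstacle is arranging the decomposition so that the inductive hypothesis actually applies, namely so that pruning a leaf edge never leaves an isolated vertex in the smaller forest. The case split on $\deg_G(v)$ handles this cleanly and is essentially the only nontrivial structural observation; the remaining verification that the specific $Lift$ and $Quad$ generators are square-free and quadratic is mechanical once Proposition~\ref{prop:tfpMoves} is invoked.
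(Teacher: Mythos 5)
Your proof is correct and follows essentially the same route as the paper: decompose the forest by the toric fiber product over a one-vertex or empty intersection (Propositions~\ref{prop:tfp} and~\ref{prop:tfpMoves}) and induct on the number of edges. The paper covers a tree by two arbitrary subtrees sharing a vertex whereas you prune a leaf at a time, and you spell out the square-freeness verification that the paper merely asserts, but these are cosmetic differences.
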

\begin{proof}
If $G$ is a vertex this is true. We defer the case of that $G$ has several components to the end and assume that $G$ is a tree. The proof is by induction on the number of edges. If $G$ is an edge then $I_{G\rightarrow H}$ is trivial.  Otherwise cover $G$ by two trees $G_1$ and $G_2$ that both have at least one edge such that they intersect in a vertex. By induction both $I_{G_1\rightarrow H}$ and $I_{G_2\rightarrow H}$ are generated by quadrics, and then so is their union by Lemma~\ref{lemma:glue}. That they are square-free follows from that square-free binomials lifts to square-free, and that all binomials from Quad are square-free, in Proposition~\ref{prop:tfpMoves}

If $G$ is not a tree but a forest, then the same argument but gluing over empty sets apply.
\end{proof}

An \emph{outerplanar graph} is a graph that can be drawn in the plane with straight edges and with its vertices on a circle without any edges crossing each other. A maximal outerplanar graph is thus a triangulation of an $n$--gon.

\begin{theorem}\label{theo:outerplanar}
If $G$ is a maximal outerplanar graph on at least three vertices, then $\mu(I_{G\rightarrow H}) = \max(2, \mu(I_{K_3\rightarrow H})).$
\end{theorem}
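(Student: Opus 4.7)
The plan is to induct on the number $n$ of vertices of $G$, using the classical structural fact that every maximal outerplanar graph on $n\geq 4$ vertices contains an \emph{ear}: a degree-$2$ vertex $v$ whose two neighbors $u,w$ are themselves adjacent. The base case $n=3$ is immediate, since $G=K_3$ and (for any non-trivial toric ideal) the Markov width is at least $2$. For the inductive step, set $G'=G\setminus\{v\}$ and $T=G[\{u,v,w\}]\cong K_3$; then $G'$ is maximal outerplanar on $n-1\geq 3$ vertices, $G=G'\cup T$, and the intersection $G'\cap T=G[\{u,w\}]$ is the edge $K_2$, which is induced in both $G'$ and $T$.

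Given this decomposition, Proposition~\ref{prop:tfp} identifies $I_{G\to H}$ with the toric fiber product $I_{G'\to H}\times_{K_2\to H} I_{K_3\to H}$, and since the intersection $K_2$ is among the four permitted types, Lemma~\ref{lemma:glue} yields
\[ \mu(I_{G\to H})\leq \max\bigl(2,\,\mu(I_{G'\to H}),\,\mu(I_{K_3\to H})\bigr). \]
The inductive hypothesis gives $\mu(I_{G'\to H})\leq\max(2,\mu(I_{K_3\to H}))$, which delivers the upper bound.

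For the matching lower bound I would re-use the same toric fiber product decomposition. By Proposition~\ref{prop:tfpMoves}, every minimal generator of $I_{K_3\to H}$ admits a lift to a binomial of the same degree in $I_{G\to H}$; since the vector configuration for the intersection $K_2\to H$ is linearly independent, such a lift cannot be rewritten using only elements of $Lift(\mathcal{B}_{G'})$ or $Quad$ of lower degree, so it remains a generator of the product ideal. Hence $\mu(I_{G\to H})\geq\mu(I_{K_3\to H})$, and together with the trivial bound $\mu(I_{G\to H})\geq 2$ whenever the ideal is non-trivial, this yields the equality.

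The main obstacle is the lower bound: the upper bound is just iterated bookkeeping against the standard ear decomposition and Lemma~\ref{lemma:glue}, but the inequality $\mu(I_{G\to H})\geq \mu(I_{K_3\to H})$ demands a careful non-cancellation check inside the toric fiber product, verifying that lifts of $K_3$-generators are not writable as combinations of quadrics from $Quad$ together with lifts of $G'$-generators of strictly lower degree.
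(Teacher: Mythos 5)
Your approach matches the paper's in all essential respects. The paper inducts on the number of triangles and decomposes $G$ into two maximal outerplanar subgraphs meeting in an edge, then invokes Lemma~\ref{lemma:glue}; you induct on vertices and peel off an ear (a degree-$2$ vertex whose neighbors are adjacent), producing the special decomposition $G = G'\cup K_3$ over the shared edge $K_2$. The ear version is a valid instance of the paper's more general split, and the induction bookkeeping is the same, so the upper bound $\mu(I_{G\to H})\leq\max(2,\mu(I_{K_3\to H}))$ is handled identically.

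Where you and the paper diverge is on the lower bound, and you are right to flag it as the delicate part. The paper's proof is silent on it: Lemma~\ref{lemma:glue} yields only the inequality $\leq$, and no argument for $\geq$ is given explicitly. Your attempt, however, is not yet rigorous. You argue that a degree-$d$ lift of a minimal $K_3$-generator ``cannot be rewritten using only elements of $Lift(\mathcal{B}_{G'})$ or $Quad$ of lower degree, so it remains a generator of the product ideal,'' but this conflates the role of a binomial inside one particular generating set with an obstruction that must hold for every generating set. The Markov width $\mu(I_{G\to H})$ is a minimum over all bases, so to conclude $\mu(I_{G\to H})\geq\mu(I_{K_3\to H})$ you would have to show that the binomials of degree $<\mu(I_{K_3\to H})$ in $I_{G\to H}$ fail to generate the ideal, not merely that one specific basis happens to contain a high-degree element. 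A correct route would exhibit a graded ring surjection $R_{G\to H}\twoheadrightarrow R_{K_3\to H}$, $r_\phi\mapsto r_{\phi|_{K_3}}$, carrying $I_{G\to H}$ onto $I_{K_3\to H}$ (this needs every $K_3\to H$ to extend to $G\to H$, which should be argued), so that any basis of $I_{G\to H}$ maps to a generating set of $I_{K_3\to H}$; alternatively, one can appeal to the known fact that for a codimension-zero toric fiber product the Markov width of the product is bounded below by that of each factor. As written, your proposal establishes the upper bound in the same way the paper does and correctly identifies the missing lower bound, but does not close it.
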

\begin{proof}
The proof is by induction on the number of triangles in $G$. The statement is clearly true if $G$ is a triangle. If $G$ has more than one triangle, then there is a way to decompose $G$ into graphs $G_1$ and $G_2$ such that both of them are maximal outerplanar graphs with at least one triangle, and their intersection is an edge. By an application of 
Lemma~\ref{lemma:glue} we are done.
\end{proof}

\begin{example}\label{ex:K3K4}
The ideal of graph homomorphisms of four-colorings of a maximal outerplanar graph $G$, $I_{G\rightarrow K_4}$, is generated by binomials of degree 2 and 12. To see why this is true we not only need Theorem~\ref{theo:outerplanar}, but also the explicit description in Proposition~\ref{prop:tfpMoves}.
Using the 4ti2 software~\cite{42}  we computed that the toric ideal $I_{K_3 \rightarrow K_4}$ is generated by the degree 12 binomial
 \[ 
 \begin{array}{l}
r_{123}r_{214}r_{341}r_{432}r_{231}r_{142}r_{413}r_{324}r_{312}r_{421}r_{134}r_{243} - \\
r_{124}r_{213}r_{342}r_{431}r_{234}r_{143}r_{412}r_{321}r_{314}r_{423}r_{132}r_{241}.
\end{array}
 \]
 The binomial can be described using a permutation representation of the alternating group on four elements. When we glue together two maximal outerplanar graphs, any binomial of degree 12 will lift to a binomial of degree 12. The quadratics will lift to quadratics, and the Quad moves will only give quadratics.
\end{example}

Propostion~\ref{prop:tfpMoves} is a Corollary of a Theorem about Gr\"obner bases by Sullivant~\cite{sullivant2007}. For future reference we state this theorem in the special case of ideals of graph homomorphisms.  There is another useful type of partial order on monomials called a weight order: Let $\omega=(\omega_1,\ldots,\omega_d)$ be a vector of weights. The \emph{weight order} $<_\omega$ on the monomials in the variables $x_1,\ldots,x_n$ is defined by $x^{a_1}_1\cdots x^{a_d}_d<_\omega x^{b_1}_1\cdots x^{b_d}_d$ if $\omega_1a_1+\cdots+\omega_d a_d<\omega_1b_1+\cdots+\omega_d b_d$. A Gr\"obner basis $\mathcal{B}$ of an ideal $I$ with respect to a weight order is a finite generating set of $I$ with the property that the initial monomials of $\mathcal{B}$ generate the initial ideal of $I$.

Let $\Phi$ be a homomorphism between polynomial rings such that $\phi$ sends each variable to a monomial. A weight vector for the image of $\omega$ induces weight vector $\Phi^*\omega$ on the domain such that the weight of a monomial is the weight of the image of the monomial.

Let $G_1$ and $G_2$ be graphs such that their edge sets agree on their intersection and let $G=G_1\cup G_2$. 
Define a ring homomorphism $\Phi_{(G_1,G_2) \rightarrow H}$ from $\mathbf{k}\left[ r_\phi \mid \phi\in \mathrm{Hom}(G,H) \right] $ to $ \mathbf{k}\left[ r_\phi \mid  \phi \in \mathrm{Hom}(G_1,H)\cup \mathrm{Hom}(G_2,H)\right] $ by 
\[\Phi_{(G_1,G_2) \rightarrow H}(r_\phi)=r_{\phi\mid_{G_1}}r_{\phi\mid_{G_2}}.\]

\begin{proposition}
Let $\mathcal{B}_i$ be a Gr\"obner basis of $I_{G_i \rightarrow H}$ with respect to $\omega_i$ for $i=1,2;$ and let $A$ be the vector configuration defining $I_{G_1 \cap G_2 \rightarrow H}$. Assume that $ Quad$ is a Gr\"obner basis with respect to $\omega$. If the vectors of $A$ are linearly independent, then
\[ Lift(\mathcal{B}_1) \cup Lift(\mathcal{B}_2) \cup Quad \]
is a Gr\"obner basis of  $ I_{G_1 \cup G_2 \rightarrow H}$ with respect to $\Phi_{(G_1,G_2) \rightarrow H}^*\omega+\epsilon \omega$ for sufficiently small $\epsilon>0$.
\end{proposition}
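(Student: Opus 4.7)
The plan is to derive this statement from the Gröbner basis theorem for toric fiber products in Sullivant~\cite{sullivant2007}, in the same way that Proposition~\ref{prop:tfpMoves} was derived from its Markov basis counterpart. Proposition~\ref{prop:tfpMoves} was obtained as a direct application of Corollary~14 in \cite{sullivant2007}; the present statement is, analogously, a direct application of the corresponding Gröbner basis theorem in that paper. So the work to do is essentially the dictionary between ideals of graph homomorphisms and toric fiber products, together with identifying the correct weight order on the lifted ring.

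First I would reuse the dictionary from Proposition~\ref{prop:tfp}: the vector configuration $A$ arises from $I_{G_1\cap G_2 \rightarrow H}$, and the linear maps $\pi_1,\pi_2$ that make the toric fiber product well defined come from restricting graph homomorphisms along $G_1\cap G_2\subseteq G_i$. By Proposition~\ref{prop:tfp} the ideal $I_{G_1\cup G_2 \rightarrow H}$ is identified with $I_{G_1 \rightarrow H} \times_A I_{G_2 \rightarrow H}$, so any basis theorem for the toric fiber product transports immediately. The linear independence hypothesis on $A$ is exactly the condition under which Sullivant's theorem gives a Gröbner basis from $Lift(\mathcal{B}_1)\cup Lift(\mathcal{B}_2)\cup Quad$.

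Next I would check the weight order. On the codomain of $\Phi_{(G_1,G_2)\rightarrow H}$ the weight $\omega$ combines $\omega_1$ on the variables $r_\phi$ with $\phi\in\mathrm{Hom}(G_1,H)$ and $\omega_2$ on those with $\phi\in\mathrm{Hom}(G_2,H)$; this is what makes the Gröbner bases $\mathcal{B}_1,\mathcal{B}_2$ lift consistently to $Lift(\mathcal{B}_1)$ and $Lift(\mathcal{B}_2)$ under the pullback $\Phi_{(G_1,G_2)\rightarrow H}^{*}$. The pullback $\Phi_{(G_1,G_2)\rightarrow H}^{*}\omega$ assigns equal weight to the two monomials in any $Quad$ binomial, so ties among $Quad$ members must be broken by the additive perturbation $\epsilon\omega$; conversely, for $\epsilon$ small enough the initial terms of the $Lift$ binomials remain those determined by $\Phi_{(G_1,G_2)\rightarrow H}^{*}\omega$. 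Thus the perturbed weight order sees $Lift$ and $Quad$ simultaneously in the way that Sullivant's theorem demands.

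The main obstacle is keeping the weight bookkeeping straight, since $\omega$ plays two roles (controlling $Quad$ directly and providing the perturbation after pullback), and one has to verify that the order in which the two effects dominate is the one required by Sullivant's hypothesis. Once this is sorted out one reads off from \cite{sullivant2007} that the initial ideal of $Lift(\mathcal{B}_1)\cup Lift(\mathcal{B}_2)\cup Quad$ with respect to $\Phi_{(G_1,G_2)\rightarrow H}^{*}\omega+\epsilon\omega$ equals the initial ideal of $I_{G_1\cup G_2\rightarrow H}$, so the union is indeed a Gröbner basis. No new combinatorial input beyond the translation already used for Proposition~\ref{prop:tfp} and Proposition~\ref{prop:tfpMoves} is required.
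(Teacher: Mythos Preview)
Your proposal is correct and matches the paper's approach exactly: the paper's proof is the single sentence ``This is Theorem 13 in \cite{sullivant2007} applied to ideals of graph homomorphisms,'' and your write-up simply unpacks that citation using the same dictionary already established in Propositions~\ref{prop:tfp} and~\ref{prop:tfpMoves}. Your additional discussion of the weight bookkeeping is more detailed than the paper's, but it is the right verification and introduces no new ideas beyond the translation.
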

\begin{proof}
This is Theorem 13 in \cite{sullivant2007} applied to ideals of graph homomorphisms.
\end{proof}

\section{Normality and related algebraic properties}\label{sec:normal}

In this section we very briefly survey some of the typical algebraical properties that are consequences of a good combinatorial understanding of generating sets of toric ideals. For more discussions of these topics we refer to Fr\"oberg for Koszul algebras~\cite{froberg1999},
Hochster for normal semigroups~\cite{hochster1972}, and Bruns and Herzog for Cohen-Macaulay rings~\cite{brunsHerzog1998}.

If $I$ is a toric ideal in a polynomial ring $R$ over a field $\mathbf{k}$, then $R/I$ is isomorphic to a semigroup ring $\mathbf{k}[B]$ where $B$ is a semigroup~\cite{coxLittleShea2007}. In Chapter 13 of Sturmfels textbook on Gr\"obner bases and polytopes~\cite{sturmfels1996} it is proved that if a toric ideal has a square-free Gr\"obner basis, then its associated semigroup is normal. It is a theorem of Hochster~\cite{hochster1972} that if 
$I$ is a homogenous toric ideal in $R$ whose associated semigroup is normal, then $R/I$ is Cohen-Macaulay. The last two statements are usually bundled up:
\begin{proposition}\label{prop:squarefreeNormalCM}
If a homogenous toric ideal $I$ in $R$ has a squarefree Gr\"obner basis, then its associated semigroup is normal, and $R/I$ is Cohen-Macaulay.
\end{proposition}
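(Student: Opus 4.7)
The plan is to assemble the statement from two classical results that are already identified in the paragraph preceding it, so the proof is really a chain of two implications rather than a new argument. First I would invoke the result from Chapter 13 of Sturmfels' book on Gr\"obner bases and polytopes: if the toric ideal $I$ admits some term order $<$ under which a reduced Gr\"obner basis consists of squarefree binomials, then the initial ideal $\operatorname{in}_<(I)$ is a squarefree monomial ideal, and one can deduce that the semigroup $B$ with $R/I \cong \mathbf{k}[B]$ is normal. The cleanest way to phrase it is to exhibit a squarefree initial ideal, since normality of the underlying semigroup follows immediately from that combinatorial structure.

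Next I would appeal to Hochster's theorem: for a homogeneous toric ideal $I$ in $R$ whose associated affine semigroup $B$ is normal, the semigroup ring $\mathbf{k}[B]\cong R/I$ is Cohen-Macaulay. Since $I$ is assumed homogeneous, the grading on $R$ descends to $\mathbf{k}[B]$, so Hochster's hypothesis is met.

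Putting these together gives precisely the two conclusions of the proposition: normality of the associated semigroup from Sturmfels, and Cohen-Macaulayness of $R/I$ from Hochster. The only real step requiring care is making sure the hypotheses of the two cited theorems line up with the way the proposition is stated, namely that $I$ is toric and homogeneous and that the Gr\"obner basis is squarefree in the sense of having squarefree leading monomials (which is equivalent to, or immediately implied by, the full binomials being squarefree after normalization). Since both cited theorems are standard and the chaining is direct, there is no substantive obstacle; the proposition is essentially a bookkeeping statement recording the implication for use in later sections.
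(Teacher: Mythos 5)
Your proof is exactly the paper's intended argument: the proposition is stated immediately after the authors recall Sturmfels' result (square-free Gr\"obner basis implies normal semigroup) and Hochster's theorem (normal semigroup plus homogeneity implies Cohen--Macaulay), and is explicitly described as a bundling of those two facts. You have simply chained them in the same order, so the approaches coincide.
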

The following proposition was proved by Anick~\cite{anick1986}.
\begin{proposition}\label{prop:quadraticKoszul}
If $I$ is an ideal with a quadratic Gr\"obner basis in a ring $R$, then $R/I$ is Koszul.
\end{proposition}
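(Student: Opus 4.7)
The plan is to reduce to the case of a quadratic monomial ideal via Gr\"obner degeneration, and then invoke Fr\"oberg's theorem on quadratic monomial algebras. Let $<$ be the monomial order for which $I$ admits a quadratic Gr\"obner basis, and set $J = \mathrm{in}_<(I)$, which is then generated by quadratic monomials. A weight vector refining $<$ produces a flat $\mathbf{k}[t]$-family of graded algebras whose generic fibre is $R/I$ and whose special fibre at $t=0$ is $R/J$; in particular the two algebras have the same Hilbert series.

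Next I would use upper semicontinuity of graded Betti numbers of the residue field along this flat family to obtain
\[ \dim_{\mathbf{k}} \mathrm{Tor}^{R/I}_i(\mathbf{k},\mathbf{k})_j \leq \dim_{\mathbf{k}} \mathrm{Tor}^{R/J}_i(\mathbf{k},\mathbf{k})_j \]
for all $i,j$. The Koszul conclusion for $R/I$, namely that the left-hand side vanishes whenever $i \neq j$, then follows once the same vanishing is established for $R/J$.

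The remaining ingredient is Fr\"oberg's theorem: if $J$ is a quadratic monomial ideal, then $R/J$ is Koszul. One proves this by constructing an explicit linear free resolution of $\mathbf{k}$ over $R/J$ whose $n$th module has a basis indexed by admissible chains of monomial generators of $J$ and whose differential models chain deletion; exactness follows from a contracting homotopy that undoes the deletion. Alternatively one can proceed by induction on the number of variables, peeling off one variable at a time and using a mapping cone argument that preserves linearity.

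The main obstacle is the semicontinuity step, verifying that Koszulness genuinely transfers across the degeneration rather than appearing as a numerical coincidence. The cleanest justification uses the Koszul criterion $H_{R/I}(t) \cdot P_{R/I}^{\mathbf{k}}(-t) = 1$ on Hilbert and Poincar\'e series together with the two observations that Hilbert series are preserved under Gr\"obner degeneration while Poincar\'e series can only grow: the identity valid for $R/J$ therefore forces the analogous identity, and hence the Koszul property, for $R/I$.
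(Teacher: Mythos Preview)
The paper does not give a proof of this proposition at all; it merely states it and attributes it to Anick~\cite{anick1986}. There is therefore nothing to compare your argument against in the paper itself.

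That said, your outline is the standard modern route to this fact and is essentially correct. The key chain is exactly as you describe: pass to the initial ideal $J=\mathrm{in}_<(I)$, which is quadratic monomial; invoke Fr\"oberg's theorem that $R/J$ is Koszul; and use upper semicontinuity of the graded Betti numbers of $\mathbf{k}$ along the Gr\"obner degeneration to conclude that $\mathrm{Tor}^{R/I}_i(\mathbf{k},\mathbf{k})_j=0$ for $i\neq j$. Note that your second paragraph already finishes the proof: once the semicontinuity inequality is established, the vanishing off the diagonal for $R/J$ immediately forces the same vanishing for $R/I$. Your final paragraph, recasting things via the Hilbert--Poincar\'e identity, is not needed and is a little loose as written (the inequality on Poincar\'e series together with equality of Hilbert series does not by itself pin down the Poincar\'e series of $R/I$ without going back through the graded $\mathrm{Tor}$ vanishing you already have), so I would simply drop it and end after the semicontinuity step.
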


Many results about normality in algebraic statistics can be derived from the results of Section 5 in a paper by Engstr\"om, Kahle, and Sullivant~\cite{engstromKahleSullivant2011}. We will now explain that method in the context of ideals of graph homomorphisms using the toric fiber product described in the previous section of this paper.
\begin{lemma}\label{lemma:normalGlue}
Let $I_{G_i \rightarrow H}$ for $i=1,2,$ be ideals whose semigroups are normal,  and let $A$ be the vector configuration defining $I_{G_1 \cap G_2 \rightarrow H}$. If $G_1 \cap G_2$ is an induced subgraph of both $G_1$ and $G_2$, and the vectors of $A$ are linearly independent, then the semigroup associated to $I_{G_1 \rightarrow H} \times I_{G_2 \rightarrow H} = I_{G_1 \cup G_2 \rightarrow H}$ is normal.
\end{lemma}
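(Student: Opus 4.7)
The plan is to reduce this to the general statement about toric fiber products from Engstr\"om, Kahle and Sullivant~\cite{engstromKahleSullivant2011}: when the configuration $A$ consists of linearly independent vectors, the toric fiber product of two normal semigroups is normal. First I would invoke Proposition~\ref{prop:tfp} to identify $I_{G_1 \cup G_2 \rightarrow H}$ with the toric fiber product $I_{G_1 \rightarrow H} \times_A I_{G_2 \rightarrow H}$. Since the statement is set up so that the $\pi$-maps that make this a toric fiber product are precisely the restriction maps $\phi \mapsto \phi|_{G_1 \cap G_2}$, the hypotheses of the Engstr\"om--Kahle--Sullivant theorem are satisfied, and normality follows immediately.

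For those who prefer a self-contained argument, I would sketch the following direct proof. Let $B$ and $C$ be the vector configurations defining $I_{G_1 \rightarrow H}$ and $I_{G_2 \rightarrow H}$. Normality of $B \times_A C$ means: any lattice point $(\mathbf{u}, \mathbf{v}) \in \mathbb{Z}(B \times_A C) \cap \mathbb{R}_{\geq 0}(B \times_A C)$ lies in $\mathbb{N}(B \times_A C)$. From such a point one reads off $\mathbf{u} \in \mathbb{Z}B \cap \mathbb{R}_{\geq 0} B$ and $\mathbf{v} \in \mathbb{Z}C \cap \mathbb{R}_{\geq 0} C$ with $\pi_1(\mathbf{u}) = \pi_2(\mathbf{v})$. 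By normality of the two factors we obtain $\mathbf{u} = \sum_{i,j} \lambda_{ij} \mathbf{b}^i_j$ and $\mathbf{v} = \sum_{i,k} \mu_{ik} \mathbf{c}^i_k$ with $\lambda_{ij}, \mu_{ik} \in \mathbb{N}$. Applying $\pi_1$ and $\pi_2$ and comparing gives
\[
\sum_i \Bigl(\sum_j \lambda_{ij}\Bigr)\mathbf{a}^i \;=\; \pi_1(\mathbf{u}) \;=\; \pi_2(\mathbf{v}) \;=\; \sum_i \Bigl(\sum_k \mu_{ik}\Bigr)\mathbf{a}^i.
\]
Here the linear independence of the $\mathbf{a}^i$ is crucial: it forces $\sum_j \lambda_{ij} = \sum_k \mu_{ik}$ for each index $i$ separately. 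These per-index equalities are exactly the combinatorial condition needed to merge the two expansions into non-negative integer coefficients $\nu_{ijk}$ on the pairs $(\mathbf{b}^i_j, \mathbf{c}^i_k)$, by a standard bipartite transportation argument (one balances $j$-masses against $k$-masses for each fixed $i$).

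The main obstacle is really only conceptual, not computational: one has to make sure that the version of the toric fiber product used for ideals of graph homomorphisms in Section~\ref{sec:glue} matches the setup of~\cite{engstromKahleSullivant2011} precisely, and that the ``linear independence of $A$'' hypothesis is the correct codimension-zero condition under which their normality theorem applies. Since Proposition~\ref{prop:tfp} already establishes this correspondence and the $\pi$-maps are given by restriction to $G_1 \cap G_2$, there is no further structural work to do, and the lemma follows.
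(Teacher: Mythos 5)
Your primary route — identifying $I_{G_1 \cup G_2 \rightarrow H}$ with the toric fiber product via Proposition~\ref{prop:tfp} and then invoking the normality result from Engstr\"om--Kahle--Sullivant~\cite{engstromKahleSullivant2011} — is exactly what the paper intends: the lemma is stated without a proof, immediately after the sentence ``Many results about normality in algebraic statistics can be derived from the results of Section 5 in a paper by Engstr\"om, Kahle, and Sullivant,'' so the implicit proof in the paper is precisely this citation. Your identification of the $\pi$-maps as the restriction maps $\phi \mapsto \phi|_{G_1 \cap G_2}$ also matches the proof of Proposition~\ref{prop:tfp}.

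Your second, self-contained argument is a genuine bonus over what the paper gives, and it is correct. The key steps all hold: from $(\mathbf{u},\mathbf{v}) \in \mathbb{Z}(B\times_A C) \cap \mathbb{R}_{\geq 0}(B\times_A C)$ one does get $\mathbf{u} \in \mathbb{Z}B \cap \mathbb{R}_{\geq 0}B$, $\mathbf{v} \in \mathbb{Z}C \cap \mathbb{R}_{\geq 0}C$, and $\pi_1(\mathbf{u}) = \pi_2(\mathbf{v})$ by projecting the coefficient vector; normality of the factors then gives integer expansions $\lambda_{ij}$, $\mu_{ik}$; linear independence of the $\mathbf{a}^i$ forces $\sum_j \lambda_{ij} = \sum_k \mu_{ik}$ for each $i$; and for each fixed $i$ a non-negative integer contingency table $\nu_{ijk}$ with prescribed row sums $\lambda_{ij}$ and column sums $\mu_{ik}$ exists by the standard transportation/northwest-corner construction, which recombines into an $\mathbb{N}$-expansion of $(\mathbf{u},\mathbf{v})$ in $B \times_A C$. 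If you wanted to include this as the actual proof rather than a sketch, the only step worth spelling out is the elementary observation that integer-marginal transportation tables always admit non-negative integer solutions; everything else is as direct as you describe. What the elementary argument buys over the paper's approach is self-containment and transparency: it makes visible exactly where the linear-independence hypothesis on $A$ (the codimension-zero condition) is used, namely to match the two marginal totals index by index.
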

Using this lemma we can proceed as in Theorem~\ref{theo:outerplanar} to lift results from small graphs to complete classes.
\begin{proposition}
Let $H$ be a graph with $I_{K_3 \rightarrow H}$ normal, then for every maximal outerplanar graph $G$, the ideal $I_{G \rightarrow H}$ is normal.
\end{proposition}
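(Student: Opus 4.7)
The plan is to mirror the proof of Theorem~\ref{theo:outerplanar}, replacing the Markov-width bound obtained from Lemma~\ref{lemma:glue} by the normality statement of Lemma~\ref{lemma:normalGlue}. I proceed by induction on the number of triangles of $G$. The base case is a single triangle: $G=K_3$, and the semigroup of $I_{G\rightarrow H}=I_{K_3\rightarrow H}$ is normal by hypothesis.

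For the inductive step, suppose $G$ is a maximal outerplanar graph with at least two triangles. Since $G$ is a triangulation of an $n$-gon, there exists a chord $e$ whose removal splits $G$ into two smaller maximal outerplanar graphs $G_1$ and $G_2$ intersecting exactly in the edge $e$, so $G_1\cap G_2=K_2$. Each $G_i$ has strictly fewer triangles than $G$, so the inductive hypothesis yields normality of the semigroups associated to $I_{G_1\rightarrow H}$ and $I_{G_2\rightarrow H}$.

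I then apply Lemma~\ref{lemma:normalGlue} with this decomposition. The subgraph $K_2$ is induced in each of $G_1$ and $G_2$ (it consists of one edge together with its two endpoints), and the vectors of the configuration $A$ defining $I_{K_2\rightarrow H}$ are linearly independent. This linear-independence fact is precisely the one already invoked in the proof of Lemma~\ref{lemma:glue} for the $K_2$-intersection case (one variable per edge of $H$ together with its loops, each carrying a distinct monomial under $\Phi_{K_2\rightarrow H}$). Hence Lemma~\ref{lemma:normalGlue} applies and gives that the semigroup associated to $I_{G\rightarrow H}=I_{G_1\cup G_2\rightarrow H}=I_{G_1\rightarrow H}\times I_{G_2\rightarrow H}$ is normal.

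The only step that requires any combinatorial care is the existence of the chord decomposition, but this is the standard fact that every triangulated polygon with at least two triangles has an interior diagonal that partitions it into two smaller triangulated subpolygons. Since this is routine, I expect no serious obstacle; the proof is really just the observation that Lemma~\ref{lemma:normalGlue} plays for normality exactly the role that Lemma~\ref{lemma:glue} plays for Markov width, and the inductive strategy of Theorem~\ref{theo:outerplanar} transports verbatim.
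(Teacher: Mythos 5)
Your proof is correct and is essentially the same as the paper's: the paper's proof is a one-line instruction to reuse the recursive gluing from Theorem~\ref{theo:outerplanar} with Lemma~\ref{lemma:normalGlue} in place of Lemma~\ref{lemma:glue}, which is exactly what you spell out. You merely make explicit the details the paper leaves implicit, in particular the chord decomposition of a triangulated polygon and the linear independence of the $K_2$-configuration already implicit in Lemma~\ref{lemma:glue}.
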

\begin{proof}
Use the same recursive gluing procedure as in the proof of Theorem~\ref{theo:outerplanar} and lift the property of normality in each step by 
Lemma~\ref{lemma:normalGlue}.
\end{proof}
In the same spirit, but using the proof of Theorem~\ref{theo:forest} as a template, one can see that $I_{G \rightarrow H}$ is normal whenever $G$ is a forest. On the other hand, by an easy slight sharpening of Theorem~\ref{theo:forest}, we know that these ideals have quadratic square-free Gr\"obner bases, and are normal and Cohen-Macaulay by Proposition~\ref{prop:squarefreeNormalCM}.

\section{Ideals of graph homomorphisms from independent sets}\label{sec:Independent}

In this section we study ideals of graph homomorphisms from independent sets. An independent set of a graph $G$ can be represented as a graph homomorphism from $G$ into the graph $\downspoon$ by sending all vertices of the independent set onto the vertex without the loop, and the other ones onto the looped vertex. The indeterminate $r_{\phi} \in R_{G\rightarrow \downspoon}$ representing the independent set $S$ of $G$ is denoted $r_S$.

We now introduce a $\mathbb{Z}^{V(G)}$ multigrading $d$ on $R_{G\rightarrow \downspoon}$ by
\[ d_v(r_S) = \left\{ 
\begin{array}{cl}
1 & v\in S, \\
0 & v\not\in S; \\
\end{array}
\right. \] 
for any vertex $v$ of $G$. This extends to any monomial $m=r_{S_1}\cdots r_{S_n}$ by $d_v(m)=d_v(r_{S_1}) + \cdots + d_v(r_{S_n})$.
To determine the kernel of the map $\Phi_{G\rightarrow \downspoon}$ we only need the multigrading $d$ according to this lemma.
\begin{lemma}\label{lemma:multigrading}
Let $G$ be a graph and let $m$ and $n$ be monomials in $R_{G \rightarrow \downspoon}$ of the same degree. Then the binomial
$m-n$ is in $I_{G \rightarrow \downspoon}$ if and only if $d_v(m)=d_v(n)$ for all vertices $v$ of $G$.
\end{lemma}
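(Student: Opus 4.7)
The plan is to make the map $\Phi_{G \rightarrow \downspoon}$ completely explicit on each edge and then read the multigrading $d$ directly off of it. Label the two vertices of $\downspoon$ as $0$ (the loop-less vertex) and $1$, so an independent set $S$ of $G$ corresponds to the homomorphism $\phi_S$ sending $S$ to $0$ and $V(G)\setminus S$ to $1$. For any non-loop edge $e = uv$ of $G$, the restriction $\phi_S|_e$ is determined entirely by which of $u,v$ lie in $S$, and the case ``both in $S$'' is ruled out by independence. Thus each edge $e$ of $G$ contributes exactly one of three possible edge-variables of $S_{G \rightarrow \downspoon}$ to the monomial $\Phi(r_S)$, corresponding to the three types $(u \in, v \notin)$, $(u \notin, v \in)$, $(u \notin, v \notin)$.

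The key combinatorial observation is that for a monomial $m = r_{S_1}\cdots r_{S_d}$ and an edge $e = uv$, the number of indices $i$ with $u \in S_i$ equals $d_u(m)$, and each such $S_i$ automatically satisfies $v \notin S_i$ because $uv \in E(G)$. Consequently the exponent of the $(u\in, v\notin)$-variable of edge $e$ in $\Phi(m)$ is precisely $d_u(m)$; symmetrically the $(u\notin, v\in)$-exponent is $d_v(m)$, and the $(u\notin, v\notin)$-exponent is $\deg(m) - d_u(m) - d_v(m)$. Loop edges of $G$, if any, force the looped vertex $v$ to have $\phi_S(v) = 1$, so $d_v \equiv 0$ and every loop contributes the same factor to every monomial.

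Both directions are then immediate from this edge-by-edge description. If $d_v(m) = d_v(n)$ for all $v$, then together with the hypothesis $\deg(m) = \deg(n)$ each of the three edge-exponents agrees at every edge, so $\Phi(m) = \Phi(n)$ and $m - n \in I_{G\rightarrow\downspoon}$. Conversely, if $\Phi(m) = \Phi(n)$, matching the $(u\in, v\notin)$-exponent at any edge incident to a vertex $u$ yields $d_u(m) = d_u(n)$. The only mild subtlety, and the only real obstacle, is that for a vertex $v$ not meeting any edge the value $d_v$ is invisible to $\Phi$; this is exactly the source of the ``uninteresting'' quadratic binomials mentioned earlier for isolated vertices, and the statement of the lemma is understood to apply to graphs without isolated vertices (or equivalently to the multidegrees in the non-isolated coordinates), which is the setting of the subsequent sections.
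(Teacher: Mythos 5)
Your argument is essentially the paper's own proof: compute $\Phi_{G\to\downspoon}$ edge-by-edge, observe that for each edge $e=uv$ the three possible restrictions of a homomorphism to $G[e]$ occur with multiplicities $d_u(m)$, $d_v(m)$, and $\deg(m)-d_u(m)-d_v(m)$, and conclude that $\Phi(m)$ is determined by the multidegree together with the total degree, which gives both directions at once.

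One small remark in your favor: you are more careful than the paper on the isolated-vertex point. If $v$ is isolated then $d_v$ is indeed invisible to $\Phi$, and the ``only if'' direction of the lemma actually fails --- for instance $r_{\{v\}}-r_\emptyset$ lies in $I_{G\to\downspoon}$ even though $d_v(r_{\{v\}})\neq d_v(r_\emptyset)$. The paper dismisses that direction as ``clear'' without the caveat, so your reading that the statement is implicitly restricted to graphs without isolated vertices (or to the non-isolated coordinates of the multidegree) is the correct interpretation, and it is consistent with how the lemma is used in the rest of Section 7.
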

\begin{proof}
That the multidegrees of $m$ and $n$ are equal when their difference is in the kernel is clear, and the proof amounts to showing the other direction.
Stated otherwise, we want to show that $\Phi_{G \rightarrow \downspoon}(m)$ can be uniquely determined from the multidegree of $m$.

Assume that the total degree of $m$ is $d$. An edge $e=uv$ can be sent by a graph homomorphism from $G$ to $\downspoon$ in three ways: (1) onto the straight edge with $u$ landing on the unlooped vertex, (2) onto the straight edge with $v$ landing on the unlooped vertex, and (3) onto the loop. But this is counted by the multidegree. The (1) case occurs $d_u(m)$ times, the (2) case occurs $d_v(m)$ times, and the (3) case occurs $d-d_u(m)-d_v(m)$ times. From this  $\Phi_{G \rightarrow \downspoon}(m)$ is uniquely determined.
\end{proof}

Using Lemma \ref{lemma:multigrading} it is often easier to argue about the independent sets and the multiset of vertices than about the monomials. Another way of stating the lemma above, is that the difference of two monomials is in the ideal if and only if they give the same multiset of vertices.

\subsection{The top graded part}

There is another natural grading on the monomials in $R_{G\rightarrow \downspoon}:$ by the number of vertices in the independent sets. This grading is important since it cuts out ideals that are previously studied. The \emph{independence number} $\alpha(G)$ of a graph $G$ is the size of the largest independent set of $G$. Alternatively, $\alpha(G)$ could have been defined as the smallest number satisfying
$ 0 \leq \sum_{v \in V(G)} d_v(r_S) \leq \alpha(G) $
for all $r_S$ in $R_{G\rightarrow \downspoon}.$ One consequence of this inequality, is that if
$ r_{S_1}r_{S_2} \cdots r_{S_d} - r_{T_1}r_{T_2} \cdots r_{T_d} \in I_{G\rightarrow \downspoon} $
and 
$ \sum_{v \in V(G)} d_v(r_{S_i}) = \alpha(G) $
for all $1\leq i \leq d,$ then 
$ \sum_{v \in V(G)} d_v(r_{T_i}) = \alpha(G) $
for all $1\leq i \leq d.$ This shows that the following definition makes sense.
\begin{definition}
The \emph{top graded part} of $R_{G\rightarrow \downspoon}$ is
\[ R_{G\rightarrow \downspoon}^{\mathtt{top}} 
 = \mathbf{k}\left[ r_S \in R_{G\rightarrow \downspoon}  :  \sum_{v \in V(G)} d_v(r_{S}) = \alpha(G)   \right],
\]
and the \emph{top graded part} of $I_{G\rightarrow \downspoon}$ is $I_{G\rightarrow \downspoon}^{\mathtt{top}}= I_{G\rightarrow \downspoon} \cap R_{G\rightarrow \downspoon}^{\mathtt{top}}. $
\end{definition}

A toric ideal can be defined in terms of a polytope. This polytope is studied in section~\ref{sec:polytopes} but we note here that the top graded part correspond to a face of this polytope. The top graded part of the toric ideal associated to the independent sets of a graph correspond to a face of the polytope.

\subsection{Any Markov width is possible}
For many toric ideals in algebraic statistics it seems that only even Markov widths are allowed \cite{MBD}. But this is not the case for ideals of graph homomorphisms from independent sets. 

We have performed computations on the ideals $I_{G\rightarrow \downspoon}$ for graphs $G$ with few vertices. Of all connected graphs with no loops, and eight or fewer vertices, there are $439$ with $\mu(I_{G\rightarrow \downspoon})=3$ and only four with $\mu(I_{G\rightarrow \downspoon})=4$. All the complete graphs have $\mu(I_{K_n\rightarrow \downspoon})=0$ and the rest have $\mu(I_{G\rightarrow \downspoon})=2$. The graphs with $\mu(I_{G\rightarrow \downspoon})=4$ are the graphs with eight vertices depicted in Figure \ref{g2}. The Markov width is low for all graphs with few vertices, but it does grow and we construct graphs $G$ with $\mu(I_{G\rightarrow \downspoon})=k$ for any integer $k\geq 2$ in Theorem \ref{t7}.

\begin{example}\label{ex:k2k3}
The smallest graph with a Markov width larger than two is $K_2\times K_3$, the skeleton of a tent. It is two cycles of length $3$ where a vertex in one of the cycles is connected with the corresponding vertex in the other cycle. It is drawn in Figure \ref{g1}. It has a basis containing one element of degree $3$,
$r_{15}r_{26}r_{34}-r_{16}r_{24}r_{35},$ and it has the quadratic elements $r_{15}r_{\emptyset}-r_1r_3,r_{16}r_{\emptyset}-r_1r_6,r_{24}r_{\emptyset}-r_2r_4,r_{26}r_{\emptyset}-r_2r_6,r_{34}r_{\emptyset}-r_3r_4,r_{35}r_{\emptyset}-r_3r_5$.
\end{example}
The graph in the previous example is a special case of a type with arbitrary large Markov width. The next of this type of graph is one of the four on at most eight vertices with Markov width four. It is the complement of a cycle $C_8$, and it is drawn in Figure \ref{g2}.

\begin{figure}
\center
\includegraphics[scale=0.5]{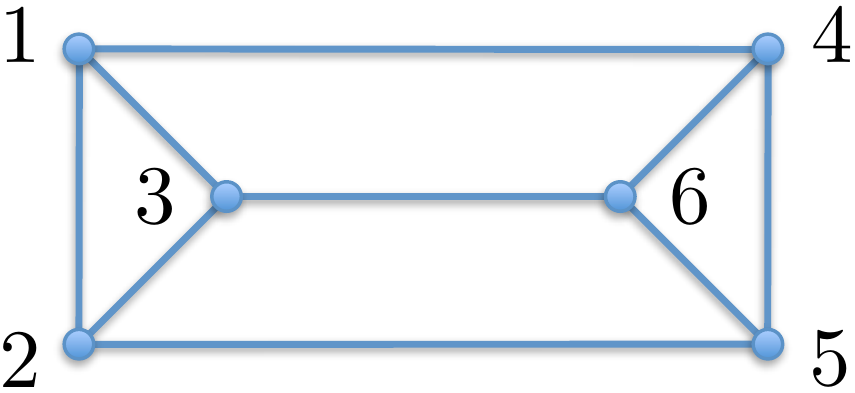}
\caption{The smallest graph with a Markov width larger than two.}\label{g1}
\end{figure}

\begin{figure}
\center
\includegraphics[width=\textwidth]{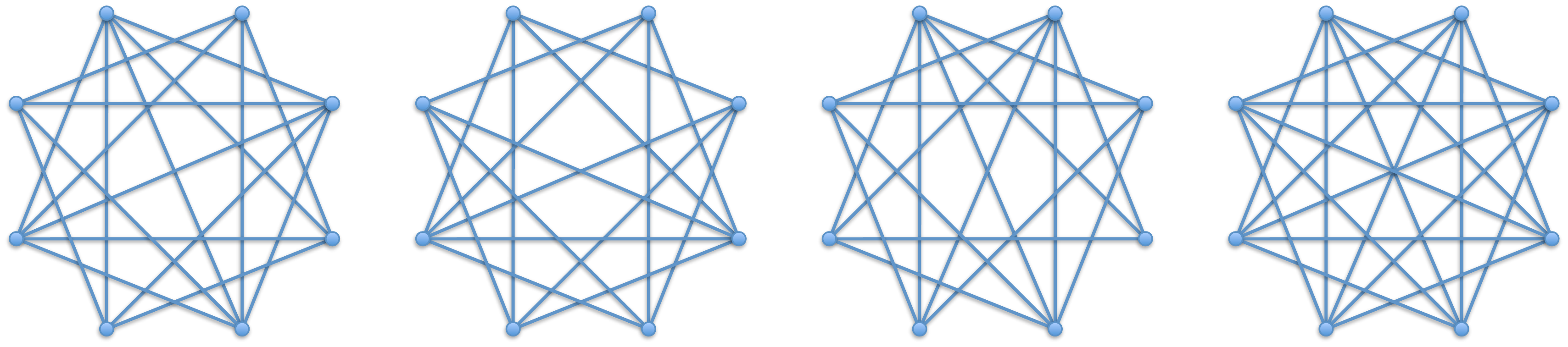}
\caption{The graphs with at most eight vertices and Markov width four. The rightmost one is the complement of $C_8$.}\label{g2}
\end{figure}
\begin{theorem}\label{t7}
If $k\ge 2$ then $\mu(I_{\overline{C_{2k}}\rightarrow \downspoon})=k.$
\end{theorem}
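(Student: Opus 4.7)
The proof splits into a lower bound $\mu \geq k$, the substantive part, and a matching upper bound obtained by constructing a Markov basis of degree $k$.

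\textbf{Lower bound.} Since $\alpha(\overline{C_{2k}})=2$ and the size-$2$ independent sets of $\overline{C_{2k}}$ are precisely the edges of $C_{2k}$, and $C_{2k}$ has exactly two perfect matchings, the ring $R_{\overline{C_{2k}}\to\downspoon}$ contains the two degree-$k$ monomials
\[
 m_e = r_{\{1,2\}}r_{\{3,4\}}\cdots r_{\{2k-1,2k\}}, \qquad m_o = r_{\{2,3\}}r_{\{4,5\}}\cdots r_{\{2k,1\}},
\]
both of multidegree $(1,\dots,1)$, so $m_e-m_o \in I_{\overline{C_{2k}}\to\downspoon}$ by Lemma~\ref{lemma:multigrading}. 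I claim the fiber of $m_e$ contains only $m_e$ and $m_o$: any monomial in that fiber has total degree $k$ and multidegree components summing to $2k$, and since every factor $r_{S_i}$ satisfies $|S_i|\leq\alpha(\overline{C_{2k}})=2$, equality forces each $|S_i|=2$; the $k$ factors then form a perfect matching of $C_{2k}$, of which there are only two. Because $m_e$ and $m_o$ involve disjoint sets of variables, any Markov move $m'-m''$ sending $m_e$ to $m_o$ satisfies $(m_e/m')m''=m_o$ with no variable of $m_e/m'$ appearing in $m_o$, forcing $m'=m_e$ and $m''=m_o$. With only two elements in the fiber no intermediate steps are available, so every generating set of $I_{\overline{C_{2k}}\to\downspoon}$ must contain this degree-$k$ binomial, giving $\mu \geq k$.

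\textbf{Upper bound.} For $\mu \leq k$, I plan to produce a generating set of degree at most $k$ by showing every fiber graph is connected by moves of degree $\leq k$. The basic ingredients are the quadratic edge-splits $r_{\{i,j\}}r_\emptyset - r_{\{i\}}r_{\{j\}}$ (valid whenever $\{i,j\}$ is an edge of $C_{2k}$) and the matching exchange $m_e-m_o$ of degree $k$. Given two monomials in the same fiber, I would first use the splits to eliminate edge variables until no $r_\emptyset$ factors remain; the residual edge-only fibers are parametrised by the cyclic linear recurrence $x_{v-1}+x_v=d_v$ on edge multiplicities around $C_{2k}$, whose non-negative integer solutions form a one-parameter family. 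The shift along this family corresponds exactly to the matching exchange, so repeated applications of $m_e-m_o$ traverse it. The main obstacle is this upper-bound verification: one must show that arbitrary mixed fibers reduce, via the quadratic splits, to the edge-only regime, and then use the one-dimensional parametrisation to conclude; the essential combinatorial input is that $C_{2k}$ has exactly two perfect matchings, so the kernel of the multidegree map on edge multiplicities is one-dimensional, generated by the matching exchange vector.
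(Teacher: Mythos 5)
Your lower bound is correct and, in fact, cleaner than what the paper offers: you make explicit that the fiber of the two matching monomials $m_e,m_o$ has exactly two elements (degree $k$ with multidegree $(1,\dots,1)$ forces all factors to be size-$2$ independent sets, i.e.\ a perfect matching of $C_{2k}$, of which there are two), and coprimality then forces any Markov basis to contain the full degree-$k$ binomial. The paper simply asserts the basis; your fiber count supplies the lower bound cleanly.

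The upper bound, however, is not a proof, and the sketch has gaps that go beyond routine filling in. First, the listed ingredients are insufficient: consider the fiber of $r_{\{v\}}r_{\{v+1,v+2\}}$. Its only other element is $r_{\{v,v+1\}}r_{\{v+2\}}$ (the alternative $r_{\{v+1\}}r_{\{v,v+2\}}$ is excluded because $\{v,v+2\}$ is not an edge of $C_{2k}$ for $k\geq 2$), and the two share no variable. Hence every Markov basis must contain the \emph{sliding} quadric $r_{\{v\}}r_{\{v+1,v+2\}}-r_{\{v,v+1\}}r_{\{v+2\}}$, which is not an edge-split $r_{\{i,j\}}r_\emptyset-r_{\{i\}}r_{\{j\}}$ and is not generated by edge-splits together with $m_e-m_o$. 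Second, the reduction ``use the splits to eliminate edge variables until no $r_\emptyset$ factors remain'' does not land you in the ``edge-only regime'': applying $r_{\{i,j\}}r_\emptyset\to r_{\{i\}}r_{\{j\}}$ produces vertex variables, and two vertex variables $r_{\{u\}},r_{\{w\}}$ at non-adjacent positions of $C_{2k}$ cannot be fused back into an edge. So the one-parameter cyclic-recurrence analysis, while correct for genuinely edge-only monomials, is never reached from a general monomial by your listed moves. The paper instead argues by induction on degree, showing for coprime $m,n$ in the same fiber how to create a common factor with a single low-degree move: if some $r_{\{v\}}$ appears, a quadric (an edge-split or a sliding move) suffices; if no vertex variable appears at all, then chasing multidegrees around the cycle forces $m$ and $n$ to contain the two matching monomials, and the degree-$k$ binomial $b$ applies. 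You would need to adopt that case split, or otherwise enlarge your quadric set and repair the reduction, to complete the upper bound.
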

\begin{proof}
Consider the cycle $C_{2k}$ with vertices $0,1, \ldots 2k-1$ and edges $\{v,v+1\}$ counting modulo $2k$. 
We prove that the complement $\overline{C_{2k}}$ of $C_{2k}$ satisfies $\mu(I_{\overline{C_{2k}}\rightarrow \downspoon})=k$.
Let $b$ be the degree $k$ binomial 
\[r_{\{0,1\}}r_{\{2,3\}}\cdots r_{\{2k-2,2k-1\}}-r_{\{1,2\}}r_{\{3,4\}}\cdots r_{\{2k-1,0\}}.\] 
Both of the monomials in $b$ has multidegree one for every vertex of $\overline{C_{2k}}$, and $b \in I_{\overline{C_{2k}}\rightarrow \downspoon}$ by Lemma~\ref{lemma:multigrading}. The binomial $b$ and the quadrics of $I_{\overline{C_{2k}}\rightarrow \downspoon}$ will form a basis of it.

Say that $m$ and $n$ are monomials  and $m-n \in I_{\overline{C_{2k}}\rightarrow \downspoon}$. We should prove that 
$m$ and $n$ can reach each other by Markov moves. The proof is by induction on the degree of $m$. If the degree is two, then by construction of the basis we are done. If the degree of $m$ is larger than two, we find Markov moves from $m$ to $m'$ such that $m'$ and $n$ have a common factor, and then we are done by induction on the degree.

So,  let $m$ and $n$ be monomials with no common factors. There are two cases:
\begin{itemize}
\item[1.] \emph{The monomial $m$ (or by symmetry $n$) contains a factor $r_{\{v\}},$ where $v$ is a vertex of $\overline{C_{2k}}$.}

The monomial $n$ contains  $r_{\{v,v+1\}}$ or $r_{\{v-1,v\}}$, and without loss of generality we assume the first mentioned. It follows that $m$ contains $r_{\{v+1\}}$ or $r_{\{v+1,v+2\}}$. If $m$ contains $r_{\{v+1\}}$ then the Markov move from $r_{\{v\}}r_{\{v+1\}}$ to $r_{\{v,v+1\}}r_\emptyset$ introduce the common factor $r_{\{v,v+1\}}$. Otherwise $m$ contains $r_{\{v+1,v+2\}}$ and the Markov move from $r_{\{v\}}r_{\{v+1,v+2\}}$ to $r_{\{v,v+1\}}r_{\{v+2\}}$ introduce the same common factor.

\item[2.] \emph{There are no factors $r_{\{v\}}$ in $m$ or $n$.}

If $m$ contains $r_{\{v,v+1\}}$ then $n$ contains $r_{\{v+1,v+2\}}$. And then $m$ contains $r_{\{v+1,v+2\}}$ because of that. Proceeding around the cycle we get that $m$ contains one of the monomials in $b,$ and $n$ contains the other one. The Markov move using $b$ introduces $k$ common variables.
\end{itemize}
\end{proof}

In the next section we show that if $G$ is bipartite then $\mu(I_{G\rightarrow \downspoon})\leq 2$, and that this is also true if $G$ becomes bipartite after removing a vertex. For some 3-partite graphs $\mu(I_{G\rightarrow \downspoon})\leq 2$, but $\mu(I_{\overline{C_{6}}\rightarrow \downspoon})=3$ according to Theorem \ref{t7}. We demonstrated the existence of a graph with $\mu(I_{G\rightarrow \downspoon})\leq k$ by a $k$-partite graph, and one could speculate that many parts are forced. It turns out that this is not the case,
but it is unclear if $\mu(I_{G\rightarrow \downspoon})$ is limited for 3-partite graphs.

\begin{theorem} For any graph $G$ there is a 4-partite graph $G'$ satisfying 
\[
\mu(I_{G\rightarrow \downspoon})\leq \mu(I_{G'\rightarrow \downspoon}).
\]
\end{theorem}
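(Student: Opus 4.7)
Since Theorem~\ref{t7} already shows that $\mu(I_{G\to\downspoon})$ is unbounded over all graphs, the substantive content of the theorem is that $4$-partite graphs admit arbitrarily large Markov width. My plan is therefore, given $G$, to construct an explicit $4$-partite graph $G'$ and exhibit a generator of $I_{G'\to\downspoon}$ of degree $\mu(I_{G\to\downspoon})$ that cannot be rewritten in lower degree.

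The construction I would attempt is a layered one on the vertex set $V(G)\times[4]$, with the coloring $(v,i)\mapsto i$ witnessing $4$-partiteness. The natural first candidate is the categorical product $G'=G\times K_4$: each layer $V(G)\times\{c\}$ carries no edges, so $S\times\{c\}$ is independent in $G'$ for every independent set $S$ of $G$ and every colour $c\in[4]$. A minimal degree-$k$ Markov move $\prod_i r_{S_i}-\prod_i r_{T_i}$ of $I_{G\to\downspoon}$, with $k=\mu(I_{G\to\downspoon})$, therefore lifts (for a fixed $c$) to the binomial $\prod_i r_{S_i\times\{c\}}-\prod_i r_{T_i\times\{c\}}$ in $R_{G'\to\downspoon}$, and by Lemma~\ref{lemma:multigrading} it lies in $I_{G'\to\downspoon}$, because the two monomials share the same $\mathbb{Z}^{V(G')}$-multidegree, concentrated in the $c$-th layer and agreeing there with the original multidegrees in $\mathbb{Z}^{V(G)}$.

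The next step, the lower bound, is the heart of the argument. I would try to show that any chain of Markov moves in $I_{G'\to\downspoon}$ connecting the two monomials of the lifted binomial descends, via the collapse $(v,i)\mapsto v$, to a chain of Markov moves in $I_{G\to\downspoon}$ between the original two monomials of no larger maximal degree. A generating set of $I_{G'\to\downspoon}$ of degree less than $k$ would then yield a generating set of $I_{G\to\downspoon}$ of the same degree, contradicting the minimality of $k=\mu(I_{G\to\downspoon})$.

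The main obstacle is that the collapse does not cleanly send variables of $R_{G'\to\downspoon}$ to variables of $R_{G\to\downspoon}$: since a single layer of $G\times K_4$ is edgeless, the monochromatic set $S\times\{c\}$ is independent for \emph{every} subset $S\subseteq V(G)$, not only for those independent in $G$, and there are further independent sets of $G'$ that mix layers. These extra variables produce additional quadratic and higher relations in $I_{G'\to\downspoon}$ with no counterpart in $I_{G\to\downspoon}$. Resolving this will likely require either refining the construction --- for instance, adding further intra-layer edges consistent with the $4$-colouring so that $S\times\{c\}$ is independent in $G'$ if and only if $S$ is independent in $G$, making the colour-$c$ subring of $R_{G'\to\downspoon}$ isomorphic to $R_{G\to\downspoon}$ --- or a delicate fiber-graph argument ruling out short-cuts through the extra monomials in the style of the case analysis in Theorem~\ref{t7}. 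Getting this isolation step right is the technical heart of the proof.
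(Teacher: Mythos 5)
Your proposal correctly identifies the shape of the argument — embed the indeterminates of $R_{G\to\downspoon}$ into $R_{G'\to\downspoon}$ so that any generating set of the larger ideal restricts to one of the smaller — but you leave the crucial isolation step open, and the specific construction $G'=G\times K_4$ does not support closing it. As you note, each layer $V(G)\times\{c\}$ of $G\times K_4$ is edgeless, so \emph{every} subset of a layer is independent, and the colour-$c$ subring is a full polynomial ring on $2^{|V(G)|}$ variables rather than a copy of $R_{G\to\downspoon}$. Worse, the monochromatic lift $\prod_i r_{S_i\times\{c\}}-\prod_i r_{T_i\times\{c\}}$ can typically be broken down through degree-two moves using those extra variables (swap elements between the $S_i\times\{c\}$, since any subset of a layer is independent), so for this $G'$ the lifted binomial is almost certainly not needed in a Markov basis and the lower bound fails. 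There is no clean ``collapse'' map because the independence structure within a layer is trivial, not isomorphic to that of $G$.

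The paper's construction is genuinely different. It keeps the vertex set $V(G)$ as one part, deletes all edges of $G$, and for each edge $uv$ of $G$ adjoins a triangle on three new vertices $w_{uv}, w_{u^\ast v}, w_{uv^\ast}$, wired to $u$ and $v$ so that exactly one of the following holds in any maximal independent set of the gadget: the gadget contains $w_{uv}$ (neither endpoint selected), $w_{uv^\ast}$ ($u$ selected), or $w_{u^\ast v}$ ($v$ selected). The three types of $w$-vertices form the remaining three parts, so $G'$ is $4$-partite. Each independent set $I$ of $G$ lifts to a canonical independent set $I'$ of $G'$ containing exactly one gadget vertex per edge, and the multigrading $d$ of Lemma~\ref{lemma:multigrading}, read at the gadget vertices, forces the other side of any binomial in $I_{G'\to\downspoon}$ involving only such lifted variables to again involve only lifted variables. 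That rigidity is what your isolation step needs and what $G\times K_4$ cannot provide. Your parenthetical suggestion of ``adding intra-layer edges'' is conceptually in the right direction — you want the auxiliary structure to \emph{encode} independence in $G} rather than merely colour $G$ — but the paper achieves this by edge-substitution rather than by modifying a product graph, and the precise gadget matters. As written, your argument has a genuine gap at exactly the point you flag as ``the technical heart''.
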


\begin{proof}
We construct $G'$ from $G$ by transforming the edges of $G$. For each edge $uv$ of $G$ introduce three new vertices $w_{uv}, w_{u^\ast v},$ and $w_{uv^\ast}.$ Remove the edge $uv$ and add the edges
\[
uw_{uv}, uw_{u^\ast v}, w_{uv}w_{u^\ast v}, w_{uv}w_{uv^\ast}, w_{u^\ast v}w_{uv^\ast}, vw_{uv}, vw_{uv^\ast}.
\]
The graph $G'$ is 4-partite with the vertices of $G$ in one part and the other three parts comes from a blown-up triangle attached to $G$ in a particular way.

For each independent set $I$ of $G$ we construct a maximal independent set $I'$ of $G'$ like this: Keep all of the independent vertices of $G$ in $G'$, and for each edge $uv$ of $G$:
\begin{itemize}
\item[(1)] if neither $u$ nor $v$ is in $I$, then add $w_{uv}$ to $I'$;
\item[(2)] if $u$ is in $I$, then add $w_{uv^\ast}$ to $I'$;
\item[(3)] if $v$ is in $I$, then add $w_{u^\ast v}$ to $I'.$
\end{itemize}
Each indeterminate of $R_{G'\rightarrow \downspoon}$ corresponds to an independent set of $G'$.

Let $m-n\in I_{G'\rightarrow \downspoon}$ where all variables in $m$ correspond to independent sets in $G$ as above. Let $uv$ be an edge in $G$ and consider the graph induced by $u,v,w_{uv}, w_{u^\ast v},$ and $w_{uv^\ast}$, all the independent sets coming from $m$ are maximal in this graphs and always contain one of $w_{uv}, w_{u^\ast v},$ and $w_{uv^\ast}$.  The variables in $n$ also have this property, otherwise there would be some vertex $w$ in $G'$ where the degrees $d_w$ would be different in $m$ and $n$.  This implies that the variables in $n$ also correspond to independent sets in $G$. Hence any generating set of $I_{G'\rightarrow \downspoon}$ will contain a generating set of  $I_{G\rightarrow \downspoon}$ showing that $\mu(I_{G\rightarrow \downspoon})\leq \mu(I_{G'\rightarrow \downspoon}).$
\end{proof}

\subsection{Independent sets from bipartite graphs}

We will now prove a theorem that is used to describe a generating set for a bipartite graph. This will later be expanded to a slightly larger class of graphs. For a bipartite graph $G$ we denote the variable $r_{S}$ with $r_{S\cap V_1,S\cap V_2}$, where $(V_1,V_2)$ is the bipartition of $V(G)$.

\begin{theorem}\label{thm:bipartiteGrobner}
Let $G$ be a bipartite graph with parts $V_1$ and $V_2$. Then there is a square-free quadratic Gr\"obner basis of $I_{G\rightarrow \downspoon}$
given by the binomials
\[ r_{A,B}r_{C,D}-r_{A\cap C,B\cup D}r_{A\cup C,B\cap D} \]
where $A,C \subseteq V_1, B,D \subseteq V_2$ and both $A\cup B$ and $C\cup D$ are independent.
\end{theorem}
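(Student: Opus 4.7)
My plan is to execute the standard chain-monomial / Hilbert-function argument underlying Hibi-type quadratic Gr\"obner bases. The first ingredient is a latent distributive lattice: the independent sets of the bipartite graph $G$, encoded as pairs $(A, B)$ with $A \subseteq V_1, B \subseteq V_2$, form a distributive sublattice of $2^{V_1} \times 2^{V_2}$ under the order $(A, B) \leq (C, D) \Leftrightarrow A \subseteq C$ and $D \subseteq B$, with meet $(A \cap C, B \cup D)$ and join $(A \cup C, B \cap D)$. The proposed binomials are exactly $xy - (x \wedge y)(x \vee y)$ over pairs of incomparable $x, y$ in this lattice.

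First I would dispose of the elementary bookkeeping. The right-hand side variables are defined: any edge of $G$ from $A \cap C$ to $B \cup D$ would lie between $A$ and $B$ or between $C$ and $D$, violating independence of $A \cup B$ or $C \cup D$; an analogous argument works for $(A \cup C) \cup (B \cap D)$, and bipartiteness is used here only to force every edge of $G$ to cross $V_1$ and $V_2$. Each binomial lies in $I_{G \rightarrow \downspoon}$ by Lemma~\ref{lemma:multigrading}, since $A + C = (A \cap C) + (A \cup C)$ and $B + D = (B \cap D) + (B \cup D)$ as multisets, so the multidegrees of the two monomials agree.

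The combinatorial heart is a bijection between \emph{chain monomials} $r_{A_1, B_1} \cdots r_{A_d, B_d}$ with $A_1 \subseteq \cdots \subseteq A_d$ and $B_1 \supseteq \cdots \supseteq B_d$ on the one hand and attainable multidegrees of degree-$d$ monomials in $R_{G \rightarrow \downspoon}$ on the other. Injectivity of the map ``chain monomial $\mapsto$ multidegree'' is immediate, since the $A_i$ and $B_i$ are recovered as the level sets of the multidegree. For surjectivity, given any multidegree $(d_v)_{v \in V(G)}$ realized by some monomial $m = r_{S_1} \cdots r_{S_d}$, set
\[ A_i = \{v \in V_1 : d_v \geq d - i + 1\} \quad \text{and} \quad B_i = \{v \in V_2 : d_v \geq i\} \text{ for } i = 1, \ldots, d. \]
These are nested and recover the multidegree, and each $A_i \cup B_i$ is independent because every edge $uv$ of $G$ satisfies $d_u + d_v \leq d$: no single $S_j$ contains both endpoints of an edge, so $d_u + d_v = |\{j : u \in S_j\}| + |\{j : v \in S_j\}| \leq d$, which rules out $u \in A_i$ (i.e., $d_u \geq d - i + 1$) and $v \in B_i$ (i.e., $d_v \geq i$) simultaneously.

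To conclude, fix a term order $<$ under which $r_{A,B} r_{C,D}$ is the initial monomial of every nontrivial binomial in $\mathcal{B}$; a concrete choice is the weight order with weight $\omega(A, B) = -|A|^2 - |V_2 \setminus B|^2$ refined by a lex tiebreak induced by a linear extension of the lattice. Strict submodularity of $|\cdot|^2$ handles the case that $A, C$ or $B, D$ are set-theoretically incomparable, and the tiebreak resolves the residual case $A \subseteq C, B \subseteq D$, in which the right-hand side is itself a chain monomial and thus must be the smaller of the two. The standard monomials with respect to $\mathrm{in}_<(\mathcal{B})$ are then precisely the chain monomials, and by the bijection above together with Lemma~\ref{lemma:multigrading} they form a $\mathbf{k}$-basis of $R_{G \rightarrow \downspoon}/I_{G \rightarrow \downspoon}$; a Hilbert-function comparison then forces $\mathrm{in}_<(I_{G \rightarrow \downspoon}) = \langle \mathrm{in}_<(\mathcal{B}) \rangle$, which is the Gr\"obner basis property. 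Square-freeness and quadraticity are immediate from the form of the binomials. The main technical step is the independence check for the canonical chain in paragraph three---bipartiteness is indispensable there, both for meet and join to preserve independence and for the ``splitting into $V_1$- and $V_2$-parts'' that underlies the definitions of $A_i$ and $B_i$.
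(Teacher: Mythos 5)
Your argument takes a genuinely different route from the paper. Where the paper defines a weight on ordered tuples, shows the sorting moves monotonically increase it, and argues termination to a unique normal form, you set up a bijection between chain monomials and attainable multidegrees and close with a Hilbert-function comparison (standard monomials of $\langle\mathrm{in}_<(\mathcal{B})\rangle$ are the chain monomials; these are linearly independent in $R/I$ since $\Phi$ separates them; Macaulay's theorem then forces $\mathrm{in}_<(I)=\langle\mathrm{in}_<(\mathcal{B})\rangle$). The distributive-lattice framing, the recovery of the canonical chain from the multidegree, and the independence check via $d_u+d_v\le d$ across an edge are all correct and use bipartiteness exactly where needed; the verification that meet and join preserve independence is also right.

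There is, however, a genuine gap in the term order. The weight $\omega(A,B)=-|A|^2-|V_2\setminus B|^2$ is only weakly compatible: in the residual case $A\subsetneq C$, $B\subsetneq D$ (which is lattice-incomparable and hence nontrivial) both pairs are set-nested, so the convexity argument gives equality, not strict inequality, and a tiebreak is truly needed. But a \emph{lex} tiebreak from a linear extension $\ell$ does not break the tie in your favor: the right-hand monomial $r_{A,D}r_{C,B}$ contains the join $(C,B)$, which has the largest $\ell$-value among the four variables, so lex (comparing from the top variable down) would make the right-hand side the initial term. A concrete instance: with $V_1=\{1\}$, $V_2=\{2\}$ and no edges, the binomial $r_{\emptyset,\emptyset}r_{\{1\},\{2\}}-r_{\emptyset,\{2\}}r_{\{1\},\emptyset}$ is a tie under your weight, and any linear extension places $(\{1\},\emptyset)$ highest, so lex prefers the wrong side. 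Merely asserting that the right-hand side ``is a chain monomial and thus must be smaller'' is circular: that is the property the term order is supposed to deliver. Two clean repairs exist. Either use a \emph{reverse} lexicographic tiebreak (then the right-hand side loses because it contains the meet, which has the smallest $\ell$-value), or replace the primary weight by $\omega(A,B)=-\bigl(|A|+|V_2\setminus B|\bigr)^2$; a short computation using $|A\cap C|+|V_2\setminus(B\cup D)|<\min\bigl(|A|+|V_2\setminus B|,\,|C|+|V_2\setminus D|\bigr)$ for every lattice-incomparable pair shows this weight is strictly compatible, so no tiebreak is needed and the rest of your Hilbert-function argument goes through unchanged.
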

\begin{proof}
We will prove that this set of binomials generate the ideal by introducing a weight vector on the monomials, and finding a normal form.
That shows that our generating set is a Gr\"obner basis. In Lemma~\ref{lemma:multigrading} a technique to determine when a binomial is in the kernel using a multigrading was introduced. That technique will be used in this proof.

First we prove that if $r_{A,B},r_{C,D}\in R_{G \rightarrow \downspoon}$ then $r_{A\cup C,B\cap D}\in R_{G \rightarrow \downspoon}$. We need to check that $(A\cup C)\cup(B\cap D)$ is an independent set. The set $B\cap D$ cannot have any edges to $A$ since $A\cup B$ is independent. Similarly $B\cap D$ can not have any edges to $C$ and we can conclude that $(A\cup C)\cup(B\cap D)$ is independent. Using the same argument for $r_{A\cap C,B\cup D}$ we conclude that $r_{A\cap C,B\cup D}r_{A\cup C,B\cap D}\in R_{G \rightarrow \downspoon}$.

By computing the degrees of the monomials (including the degrees $d_v$) in $r_{A,B}r_{C,D}-r_{A\cap C,B\cup D}r_{A\cup C,B\cap D}$ we will use Lemma~\ref{lemma:multigrading} to establish that 
\[r_{A,B}r_{C,D}-r_{A\cap C,B\cup D}r_{A\cup C,B\cap D}\in I_{G\rightarrow \downspoon}.\] 
The degree $d_v(r_{A,B}r_{C,D})=2$ if $v$ is in both $A\cup B$ and $C\cup D$, and then $v$ is in both $(A\cap C)\cup(B\cup D)$ and $(A\cup C)\cup(B\cap D)$. The degree $d_v(r_{A,B}r_{C,D})=1$ if $v$ is in exactly one of $A\cup B$ and $C\cup D$, and then it is in exactly one of $(A\cap C)\cup(B\cup D)$ and $(A\cup C)\cup(B\cap D)$. Finally $d_v(r_{A,B}r_{C,D})=0$ if $v$ is in neither of $A\cup B$ and $C\cup D,$ and then $v$ is in neither of $(A\cap C)\cup(B\cup D)$ and $(A\cup C)\cup(B\cap D)$.

Any given monomial $m$ in $R_{G \rightarrow \downspoon}$ can be turned into normal form by Markov steps. That is, we want to find quadratic binomials $q_i$ that are of the type in the theorem statement, and monomials $n_i$ such that $m+q_1n_1+\cdots+ q_kn_k$ is a monomial $\prod_{i=1}^dr_{A'_i,B'_i}$ where $A'_i\subseteq A'_{i+1}$ and $B'_{i}\supseteq B'_{i+1}$. The normal form monomial is illustrated in Figure \ref{fig}.

Instead of a monomial in $R_{G \rightarrow \downspoon}$ we consider the ordered tuple of independent sets
\[(A_1\cup B_1,\ldots,A_k\cup B_k).\]
To move from
\[W=(A_1\cup B_1,\ldots,A_k\cup B_k)\]
to
\[
\begin{array}{rcl}
W'&=&(A_1\cup B_1,\ldots,(A_{t,1}\cap A_{t+1,1})\cup(B_{t,1}\cup B_{t+1,1}),\\
&&(A_{t}\cup A_{t+1})\cup(B_{t}\cap B_{t+1}),\ldots,A_k\cup B_k)
\end{array}
\]
corresponds to taking a Markov step of the type in the theorem statement:
\[
(r_{A_{t}\cap A_{t+1},B_{t}\cup B_{t+1}}r_{A_t\cup A_{t+1},B_t\cap B_{t+1}}-r_{A_{t},B_{t}}r_{A_{t+1},B_{t+1}})\prod_{i\in[k],i\notin\{t,t+1\}}r_{A_{i},B_{i}}.
\]
We denote this Markov step by $W\rightarrow_t W'$.
To any tuple 
$W=(A_1\cup B_1,\ldots,A_k\cup B_k)$ we associate the weight
$\omega(W)=\sum_{1\le i<j\le k}[(|A_j|-|A_i|)+(|B_j|-|B_i|)],$
or equivalently
\[\begin{array}{rcl}
\omega(W)&=&\displaystyle\sum_{j=1}^k(j-1)|A_j|-\sum_{i=1}^k(k-i-1)|A_i|\\
&&\displaystyle+\sum_{i=1}^k(k-i-1)|B_i|-\sum_{j=1}^k(j-1)|B_j|\\
&=&\displaystyle\sum_{j=1}^k((j-1)-(k-j-1))|A_j|+\sum_{i=1}^k((k-i-1)-(i-1))|B_i|\\
&=&\displaystyle\sum_{i=1}^k(2i-k)|A_i|+\sum_{i=1}^k(k-2i)|B_i|.
\end{array}
\]
If $W\rightarrow_tW'$ then 
\[
\begin{array}{rcl}
\omega(W)-\omega(W')&=&t(|A_t|-|A_t\cap A_{t+1}|)-t(|B_t|-|B_t\cup B_{t+1}|)\\&&+(t+1)(|A_{t+1}|-|A_t\cup A_{t+1}|)\\&&-(t+1)(|B_{t+1}|-|B_t\cap B_{t+1}|)\\&=&(|A_{t+1}|-|A_t\cup A_{t+1}|)+(|B_{t+1}|-|B_t\cap B_{t+1}|)\\&\le& 0
\end{array}
\]
with equality if and only if $A_{t+1}\supseteq A_{t}$ and $B_{t+1}\subseteq B_{t}$. If there are no $t$ such that $W\rightarrow_t W^*$ and $W\neq W^*$, then we can conclude that $W$ is on the normal form corresponding to Figure \ref{fig}. If there is a $t$ such that $W\rightarrow_t W^*$ and $W\neq W^*$ then $\omega(W')>\omega(W)$, but $\omega$ is a bounded integer, so we can only take a finite number of steps until we can find no more $t$, and then we have reached the normal form.

The normal form only depends on the numbers $d_v(m)$ and the degree of $m$, so if $m-n\in I_{G \rightarrow \downspoon}$ then both $m$ and $n$ have the same normal form and we can move between them using the Markov steps in the theorem statement.

\begin{figure}
\center
\includegraphics[width=\textwidth]{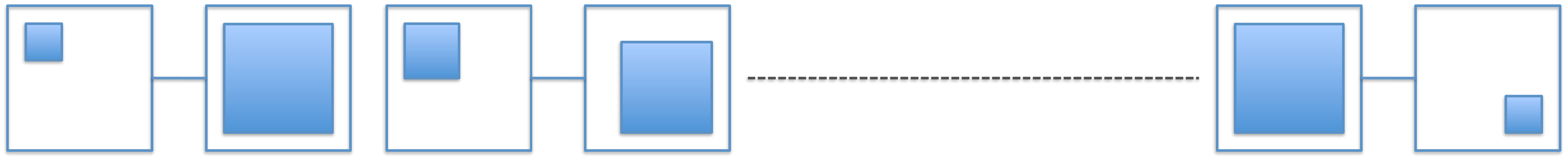}
\caption{The normal form of a monomial in $R_G$ as used in the proof of Theorem \ref{thm:bipartiteGrobner}. It is a tuple $(A_1\cup B_1,A_2\cup B_2,\ldots,A_k\cup A_k)$ corresponding to a monomial $r_{A_1,B_2}r_{A_2,B_2}\cdots r_{A_k,B_k}$ with $A_1\subseteq A_2\subseteq\ldots\subseteq A_k$ and $B_1\supseteq B_2\supseteq\ldots\supseteq B_k$.}\label{fig}
\end{figure}
\end{proof}

\begin{corollary}
If $G$ is a bipartite graph then $I_{G \rightarrow \downspoon}$ has a normal semigroup and is Cohen-Macaulay.
\end{corollary}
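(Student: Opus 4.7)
The plan is essentially a one-line invocation of the general machinery already established in the paper. Theorem~\ref{thm:bipartiteGrobner} just above gives an explicit square-free (in fact quadratic) Gr\"obner basis for $I_{G\rightarrow \downspoon}$ when $G$ is bipartite, so the hypothesis of Proposition~\ref{prop:squarefreeNormalCM} is met once we know the ideal is homogeneous. Homogeneity is immediate: the edge separator map $\Phi_{G\rightarrow \downspoon}$ sends every variable $r_\phi \in R_{G \rightarrow \downspoon}$ to a monomial of the same degree $|E(G)|$ in $S_{G \rightarrow \downspoon}$, so $I_{G\rightarrow \downspoon}$ is generated by binomials whose two monomials have equal total degree, hence it is a homogeneous toric ideal in the standard grading of $R_{G \rightarrow \downspoon}$.

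With these two observations in hand, the proof reads: by Theorem~\ref{thm:bipartiteGrobner} the ideal $I_{G\rightarrow \downspoon}$ admits a square-free Gr\"obner basis, and by Proposition~\ref{prop:squarefreeNormalCM} this implies that the associated semigroup of $I_{G \rightarrow \downspoon}$ is normal and that $R_{G\rightarrow \downspoon}/I_{G\rightarrow \downspoon}$ is Cohen-Macaulay.

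There is no genuine obstacle here; all of the substance is hidden in the preceding theorem, where the normal-form argument driven by the weight $\omega$ produced the Gr\"obner basis. If anything, the only care needed is to note that \emph{normality} in the statement refers to the semigroup $B$ with $R_{G\rightarrow \downspoon}/I_{G\rightarrow \downspoon} \cong \mathbf{k}[B]$, matching the terminology of Proposition~\ref{prop:squarefreeNormalCM}, and that Cohen-Macaulayness follows from Hochster's theorem as bundled in that proposition. So the corollary is genuinely a corollary, requiring no new constructions beyond a citation of the two preceding results.
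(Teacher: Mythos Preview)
Your proposal is correct and mirrors the paper's own proof, which simply cites Theorem~\ref{thm:bipartiteGrobner} and Proposition~\ref{prop:squarefreeNormalCM}. The extra remark on homogeneity of $I_{G\rightarrow\downspoon}$ is a harmless clarification that the paper leaves implicit.
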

\begin{proof}
This follows from the theorem and Proposition~\ref{prop:squarefreeNormalCM}.
\end{proof}

\section{Independent sets from almost biparite graphs}
The previous section can be extended to a slightly larger class of graphs: the class of graphs that are bipartite if you delete a vertex. We call such graphs \emph{almost bipartite}. This set of graphs is interesting since it includes all cycles. The even cycles are bipartite but not the odd ones. Cycles are good models since they have a fairly simple structure and one can hope to understand what happens.

\begin{theorem}\label{t4}
If $G$ is almost bipartite then the ideal $I_{G\rightarrow \downspoon}$ has a quadratic square-free Gr\"obner basis.
\end{theorem}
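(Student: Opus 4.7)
Let $v^{\ast}\in V(G)$ be a vertex whose removal leaves a bipartite graph with parts $V_1$ and $V_2$, and write $N_i:=N(v^{\ast})\cap V_i$ for $i=1,2$. The plan is to reduce to the bipartite situation of Theorem~\ref{thm:bipartiteGrobner} by bookkeeping which independent sets use $v^{\ast}$. I would partition the variables of $R_{G\rightarrow\downspoon}$ into \emph{plain} variables $r_{A,B}$, corresponding to independent sets $A\cup B\subseteq V_1\cup V_2$, and \emph{starred} variables $r^{\ast}_{A,B}$, corresponding to independent sets $\{v^{\ast}\}\cup A\cup B$ with $A\subseteq V_1\setminus N_1$ and $B\subseteq V_2\setminus N_2$. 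Note that $G-v^{\ast}-N(v^{\ast})$ is itself bipartite, so the starred variables are indexed by the independent sets of a second bipartite graph, and the constraint $A\cap N_1=B\cap N_2=\emptyset$ is preserved under union and intersection.

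My candidate Gr\"obner basis would consist of three families of quadrics. First, the plain--plain lattice moves $r_{A,B}r_{C,D}-r_{A\cap C,B\cup D}r_{A\cup C,B\cap D}$ supplied by Theorem~\ref{thm:bipartiteGrobner} applied to $G-v^{\ast}$. Second, the analogous star--star moves $r^{\ast}_{A,B}r^{\ast}_{C,D}-r^{\ast}_{A\cap C,B\cup D}r^{\ast}_{A\cup C,B\cap D}$, whose well-definedness is automatic from the preservation of the neighborhood constraints noted above. Third, \emph{mixed} moves of the form $r_{A,B}r^{\ast}_{C,D}-r_{A',B'}r^{\ast}_{C',D'}$, where $(A',B',C',D')$ are obtained from $(A,B,C,D)$ by a symmetric-difference surgery that preserves the multidegree while keeping the starred side inside $(V_1\setminus N_1)\cup(V_2\setminus N_2)$; for instance, the move with $A'=A\cap C$, $B'=B\cup D$, $C'=A\cup C$, $D'=B\cap D$ is legal precisely when $A\cap N_1=\emptyset$ and $B\cap N_2=\emptyset$, and I would include the three mirror variants obtained by swapping the roles of the two factors.

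For termination and uniqueness I would imitate the proof of Theorem~\ref{thm:bipartiteGrobner}. The normal form of a monomial $m$ places all starred variables to the right of the plain ones, and internally arranges each block into the chain shape $A_1\subseteq A_2\subseteq\cdots$, $B_1\supseteq B_2\supseteq\cdots$ of Figure~\ref{fig}. Using the bipartite weight on each block plus a small additional penalty that forces starred variables to the right of plain ones, every proposed move strictly decreases the weight, so rewriting terminates. By Lemma~\ref{lemma:multigrading}, together with the observation that the number of starred factors in $m$ equals $d_{v^{\ast}}(m)$ and is therefore determined by the multidegree, two monomials in the same fiber share the same normal form, establishing that the candidate set is a Gr\"obner basis. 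All quadrics are visibly square-free, and Proposition~\ref{prop:squarefreeNormalCM} then gives normality and Cohen--Macaulayness as a bonus.

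The main obstacle I expect lies in family (iii): the neighborhood $N(v^{\ast})$ obstructs the naive lattice move, because a vertex of $N_i$ may occur in a plain variable but cannot occur in any starred one, so such vertices must be routed exclusively through plain factors during the rewriting. Verifying that the restricted mixed moves nevertheless suffice to reach the normal form---equivalently, that the fiber graph restricted to monomials with a fixed number of starred factors remains connected by these particular quadrics---will require a careful case analysis keyed to which of $N_1,N_2$ is hit by each variable, and is the step most likely to require genuinely new ideas beyond those used in Theorem~\ref{thm:bipartiteGrobner}.
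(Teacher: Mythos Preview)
Your overall architecture matches the paper's: split variables into plain ($r^{\circ}$) and starred ($r^{\bullet}$) according to whether $v^{\ast}$ is used, take the bipartite lattice moves within each block, and add mixed moves to connect the blocks. But two of your steps are genuine gaps, not just details to fill in.

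First, your proposed normal form---starred variables to the right, each block in the chain shape of Figure~\ref{fig}---is precisely what the paper calls the \emph{intermediate normal form}, and it is explicitly \emph{not} unique: two distinct monomials in the same fiber can both be in this shape. A single weight of the type you describe cannot separate them, because the bipartite weight on each block is already maximized at the chain shape, and the blocks have fixed sizes $d_{v^{\ast}}(m)$ and $\deg(m)-d_{v^{\ast}}(m)$. The paper therefore defines the true normal form \emph{recursively}: among all intermediate normal forms in the fiber, minimize $|A_1|$, then maximize $|B_1|$, then peel off $r^{\circ}_{A_1,B_1}$ and recurse. No global weight order is exhibited; the Gr\"obner claim is established by showing every monomial rewrites to this unique recursive target.

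Second, your mixed moves are the wrong shape. Your lattice move $r_{A,B}r^{\ast}_{C,D}\to r_{A\cap C,B\cup D}r^{\ast}_{A\cup C,B\cap D}$ requires $A\cap N_1=\emptyset$, so it cannot touch a plain factor that meets $N(v^{\ast})$. The paper's mixed move (Lemma~\ref{10}) is a genuinely different \emph{surgery}: choose $E\subseteq A$ disjoint from both $C$ and $N(v^{\ast})$, and send
\[
r^{\circ}_{A,B}\,r^{\bullet}_{C,D}\ \longrightarrow\ r^{\circ}_{A\setminus E,\,B\cup(N(E)\cap D)}\,r^{\bullet}_{C\cup E,\,D\setminus N(E)}.
\]
This moves only the non-neighbor part $E$ into the starred factor, while the neighbors of $E$ that were in $D$ migrate the other way into $B$. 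Already for $C_5$ the generator $r^{\circ}_{\{2,4\},\emptyset}r^{\bullet}_{\emptyset,\emptyset}-r^{\circ}_{\{2\},\emptyset}r^{\bullet}_{\{4\},\emptyset}$ is of this surgery type and is \emph{not} one of your lattice variants, since $2\in N_1$ sits in $A$. The hard content of the paper's proof is then Lemmas~\ref{13} and~\ref{14}, which show that whenever the intermediate normal form fails the recursive minimality on $A_1$, a suitable $E$ and index $i$ exist so that the surgery move is available; this is exactly the ``case analysis keyed to which of $N_1,N_2$ is hit'' that you flag as missing, and it is where the new ideas live.
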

The proof is quite technical and requires some new lemmas and some new notation.

\begin{figure}
\center
\includegraphics[width=\textwidth]{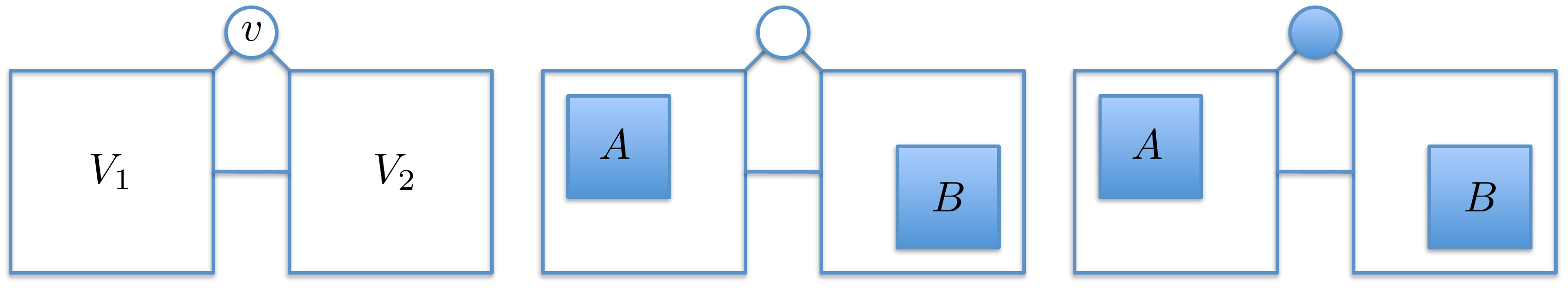}
\caption{Drawings of an almost bipartite graph with vertex set $V_1 \cup V_2 \cup \{v\},$ the variable $r^{\circ}_{A,B},$ and the variable $r^{\bullet}_{A,B}$.}\label{var}
\end{figure}

Let $G$ be a graph with $V(G)=V_1\cup V_2\cup \{v\}$ where the union is disjoint and $V_1$ and $V_2$ are independent sets. The variable $r_A$ is denoted $r^{\bullet}_{A\cap V_1,A\cap V_2}$ if $v\in A$, and $r^{\circ}_{A\cap V_1,A\cap V_2}$ if $v\notin A$.

A monomial $m$ is always of the form $m=m^\circ m^\bullet$, where $m^\circ$ only contains variables $r^\circ_{A,B}$ and $m^\bullet$ only contains variables $r^\bullet_{A,B}$. Define the degrees $\deg^\circ(m)=\deg(m^\circ)$ and $\deg^\bullet(m)=\deg(m^\bullet)$.

A binomial $r^\circ_{A,B}r^\circ_{C,D}-r^\circ_{A',B'}r^\circ_{C',D'}$ is \emph{uncovered} if it is in $I_{G\rightarrow \downspoon}$.

A binomial $r^\bullet_{A,B}r^\bullet_{C,D}-r^\bullet_{A',B'}r^\bullet_{C',D'}$ is \emph{covered} if it is in $I_{G\rightarrow \downspoon}$.

A binomial $r^\circ_{A,B}r^\bullet_{C,D}-r^\circ_{A',B'}r^\bullet_{C',D'}$ is \emph{mixed} if it is in $I_{G\rightarrow \downspoon}$.

A monomial  
\[
\prod_{i\in [d^\circ]} r^{\circ}_{A_i,B_i}\prod_{i\in [d^\bullet]} r^{\bullet}_{C_i,D_i}
\] 
in $R_{G\rightarrow \downspoon}$ is on \emph{intermediate normal form} if $A_i\subseteq A_{i+1},B_{i+1}\subseteq B_{i},C_{i+1}\subseteq C_{i},D_i\subseteq D_{i+1}$. This monomial is not unique in the sense that there can be two different monomials $m,m'$ both on intermediate normal form such that $m-m'\in I_{G\rightarrow \downspoon}$.

Recursively define the \emph{normal form} as follows. A monomial 
\[
\prod_{i\in [d^\circ]} r^{\circ}_{A_i,B_i}\prod_{i\in [d^\bullet]} r^{\bullet}_{C_i,D_i}
\] 
on intermediate normal form with $d^\circ=0$ is on normal form. Let $m$ be monomial and let  $n=\prod_{i\in [d^\circ]} r^{\circ}_{A_i,B_i}\prod_{i\in [d^\bullet]} r^{\bullet}_{C_i,D_i}$ be a monomial on intermediate normal form with $d^\circ>0$ such that $m-n\in I_{G\rightarrow \downspoon}$. If $|A_1|$ is minimal and $|B_1|$ is maximal among all such intermediate normal monomials, then $n$ is on normal form if $n/r^{\circ}_{A_1,B_1}$ is on normal form. We will show that this normal form monomial is unique in the sense that for any monomial $m$ there is only one normal form monomial $n$ such that $m-n\in I_{G\rightarrow \downspoon}$.

We will show that for each monomial $m$ there is a monomial $n$ on intermediate normal form so that $m-n,m^\circ-n^\circ$ and $m^\bullet-n^\bullet$ all are in the ideal generated by the uncovered and covered binomials. This is done in lemma~\ref{11}.

For each monomial on intermediate normal form it will be shown that there exists a mixed binomial bringing it closer to normal form. This is done in lemma~\ref{14}. This is all the machinery needed to prove the theorem.

We will draw the variables $r^{\circ}_{A,B}$ and $r^{\bullet}_{A,B}$ as in Figure~\ref{var}, the Markov steps of Lemma~\ref{8} (uncovered) and Lemma~\ref{9} (covered) are illustrated in Figures~\ref{l8} and~\ref{l9}.

We begin by some lemmas describing the needed binomials of the different types.
\begin{lemma}\label{8}
If $r^{\circ}_{A,B},r^{\circ}_{C,D}\in R_{G\rightarrow \downspoon}$ then $r^{\circ}_{A\cap C,B\cup D}r^{\circ}_{A\cup C,B\cap D}-r^{\circ}_{A,B}r^{\circ}_{C,D}\in I_{G\rightarrow \downspoon}$. In other words: $r^{\circ}_{A\cap C,B\cup D}r^{\circ}_{A\cup C,B\cap D}-r^{\circ}_{A,B}r^{\circ}_{C,D}$ is an uncovered binomial.
\end{lemma}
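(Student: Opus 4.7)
The plan is to mimic the corresponding half of Theorem~\ref{thm:bipartiteGrobner}, observing that since $v$ belongs to none of the four independent sets appearing in the binomial, the vertex $v$ is effectively absent and the verification reduces to a purely bipartite question on $V_1 \cup V_2$.

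First one must confirm that the left-hand monomial actually lies in $R_{G\rightarrow \downspoon}$, i.e. that both $(A\cap C)\cup(B\cup D)$ and $(A\cup C)\cup(B\cap D)$ are independent sets of $G$ not containing $v$. Since $A \cup C \subseteq V_1$ and $B \cap D,\, B \cup D \subseteq V_2$ and both $V_1$ and $V_2$ are themselves independent, the only edges to rule out are those crossing the bipartition. An edge between $A \cup C$ and $B \cap D$ would have one endpoint in $A$ or $C$ and the other in $B \cap D$, hence in both $B$ and $D$, and so would contradict either the independence of $A \cup B$ or that of $C \cup D$. The set $(A\cap C) \cup (B \cup D)$ is handled by the symmetric argument.

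Next the plan is to invoke Lemma~\ref{lemma:multigrading}: it suffices to show that the two monomials in the binomial agree in every multidegree $d_w$. For $w = v$ both are trivially $0$ since $v$ lies in none of the four sets. For $w \in V_1$, membership of $w$ in any of the four sets depends only on its membership in $A$ or $C$, and the elementary identity $[w \in A] + [w \in C] = [w \in A \cap C] + [w \in A \cup C]$ gives the equality; the analogous identity on $V_2$ handles membership through $B$ and $D$. Thus the two monomials have identical multidegree and Lemma~\ref{lemma:multigrading} concludes.

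I do not anticipate any real obstacle here: this is the easy \emph{uncovered} case, in which $v$ plays no role, so the proof is essentially a transcription of the corresponding step of Theorem~\ref{thm:bipartiteGrobner}. The substantive technical work of the section will arise in the subsequent lemmas, where covered and mixed binomials force one to track $v$ and its adjacencies to both $V_1$ and $V_2$ with care.
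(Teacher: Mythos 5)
Your proposal is correct and matches the paper's approach exactly: the paper's proof consists of the single observation that $v$ lies in none of the four independent sets, so one may ignore $v$ and repeat the argument from Theorem~\ref{thm:bipartiteGrobner}; you have simply unpacked that appeal by re-running the bipartite independence check and the multidegree comparison via Lemma~\ref{lemma:multigrading}.
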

\begin{proof}
Since $v$ is not in any of the independent sets defining the variables $r^{\circ}_{A,B}$ or $r^{\circ}_{C,D}$, we can ignore it and proceed as in the bipartite case Theorem~\ref{thm:bipartiteGrobner}.
\end{proof}

\begin{figure}
\center
\includegraphics[width=\textwidth]{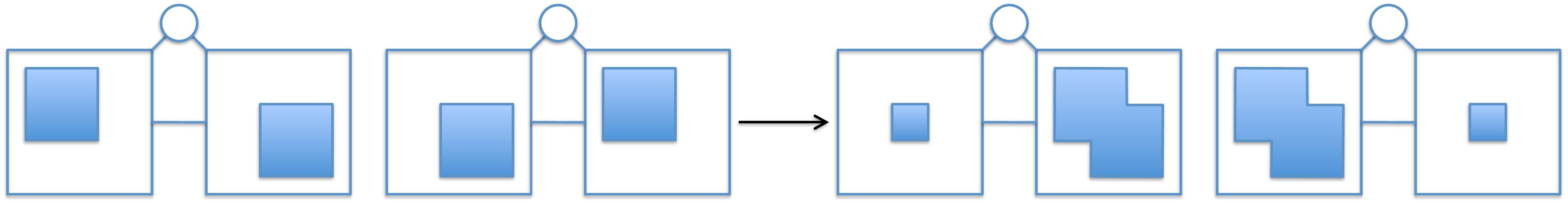}
\caption{A Markov step in Lemma~\ref{8}.} \label{l8}
\end{figure}
\begin{lemma}\label{9}
If $r^{\bullet}_{A,B},r^{\bullet}_{C,D}\in R_{G\rightarrow \downspoon}$ then $r^{\bullet}_{A\cup C,B\cap D}r^{\bullet}_{A\cap C,B\cup D}-r^{\bullet}_{A,B}r^{\bullet}_{C,D}\in I_{G\rightarrow \downspoon}$. In other words $r^{\bullet}_{A\cup C,B\cap D}r^{\bullet}_{A\cap C,B\cup D}-r^{\bullet}_{A,B}r^{\bullet}_{C,D}$ is a covered binomial.
\end{lemma}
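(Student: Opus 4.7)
The plan is to mimic the proof of Lemma~\ref{8} (and of Theorem~\ref{thm:bipartiteGrobner}), but paying attention to the fact that the independent set represented by $r^{\bullet}_{A,B}$ is $A \cup B \cup \{v\}$ rather than just $A \cup B$. The two things to verify are (i) that $r^{\bullet}_{A\cup C,B\cap D}$ and $r^{\bullet}_{A\cap C,B\cup D}$ actually lie in $R_{G\rightarrow \downspoon}$, i.e.\ that $(A\cup C)\cup(B\cap D)\cup\{v\}$ and $(A\cap C)\cup(B\cup D)\cup\{v\}$ are independent in $G$, and (ii) that the binomial is in the kernel, which by Lemma~\ref{lemma:multigrading} reduces to a multidegree check.

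For (i), first note that since $r^{\bullet}_{A,B}$ and $r^{\bullet}_{C,D}$ are variables in $R_{G\rightarrow \downspoon}$, both $A\cup B\cup\{v\}$ and $C\cup D\cup\{v\}$ are independent sets. In particular, $A,C\subseteq V_1\setminus N(v)$ and $B,D\subseteq V_2\setminus N(v)$, so unions and intersections of these sets also avoid $N(v)$, and adjoining $v$ creates no new edges. It remains to check that there are no edges between $A\cup C$ and $B\cap D$: any such edge $ab$ with $a\in A$ would contradict independence of $A\cup B\cup\{v\}$ (since $b\in B$), and with $a\in C$ would contradict independence of $C\cup D\cup\{v\}$ (since $b\in D$). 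The argument for $(A\cap C)\cup(B\cup D)\cup\{v\}$ is symmetric.

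For (ii), I compute $d_w$ on both monomials for each vertex $w$. For $w=v$, both monomials are products of two $r^{\bullet}$-variables, so $d_v$ equals $2$ on each side. For $w\in V_1$, one has $d_w(r^{\bullet}_{A,B}r^{\bullet}_{C,D}) = [w\in A]+[w\in C]$, and likewise $d_w(r^{\bullet}_{A\cup C,B\cap D}r^{\bullet}_{A\cap C,B\cup D}) = [w\in A\cup C]+[w\in A\cap C]$, and these agree by inclusion--exclusion. The case $w\in V_2$ is analogous. By Lemma~\ref{lemma:multigrading}, the binomial lies in $I_{G\rightarrow \downspoon}$.

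No step is really an obstacle here; the only subtlety is the bookkeeping around $v$, which is handled uniformly by observing that forcing $v$ into every independent set of a $\bullet$-variable just restricts the admissible $A,B,C,D$ to subsets of $V_1\setminus N(v)$ and $V_2\setminus N(v)$ and then plays no further role in the union/intersection arguments. Everything else is a direct transcription of the bipartite computation.
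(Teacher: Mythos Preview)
Your proof is correct and follows essentially the same approach as the paper: observe that all sets $A,B,C,D$ avoid $N(v)$ (since each $\bullet$-variable encodes an independent set containing $v$), so that $v$ and $N(v)$ can be ignored and the remaining verification is exactly the bipartite case from Theorem~\ref{thm:bipartiteGrobner}. You have simply spelled out the independence and multidegree checks in more detail than the paper, which compresses them into a one-line appeal to the bipartite argument.
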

\begin{proof}
Since no elements in $N(v)$ is in any of the sets defining the variables $r^{\bullet}_{A,B},r^{\bullet}_{C,D}$ we can ignore them together with $v$  and proceed as in the bipartite case Theorem~\ref{thm:bipartiteGrobner}.
\end{proof}

\begin{figure}
\center
\includegraphics[width=\textwidth]{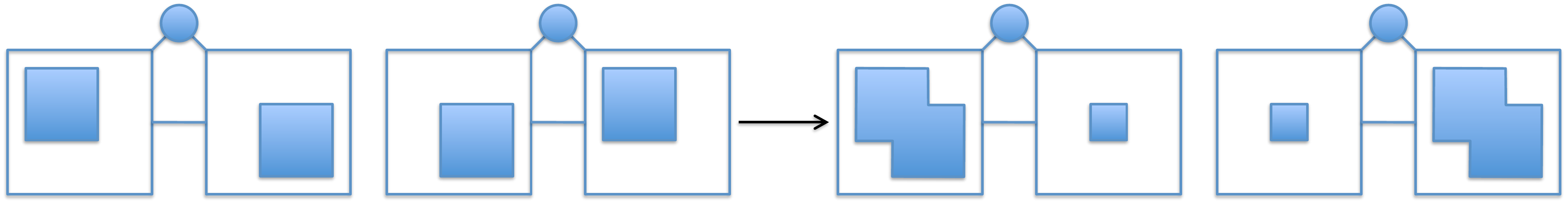}
\caption{A Markov step in Lemma \ref{9}.} \label{l9}
\end{figure}

The two previous lemmas give two similar types of generators for $I_{G\rightarrow \downspoon}$, but we will also need another of a  quite different type.

\begin{lemma}\label{10}
If $r^{\circ}_{A,B},r^{\bullet}_{C,D},r^{\circ}_{A\setminus E,B\cup (N(E)\cap D)},r^{\bullet}_{C\cup E,D\setminus N(E)}\in R_{G\rightarrow \downspoon}$, $E\subseteq A$, and $E\cap C=\emptyset$, then
\[r^{\circ}_{A,B}r^{\bullet}_{C,D}-r^{\circ}_{A\setminus E,B\cup (N(E)\cap D)}r^{\bullet}_{C\cup E,D\setminus N(E)}\] is in $I_{G\rightarrow \downspoon}$. In other words $r^{\circ}_{A,B}r^{\bullet}_{C,D}-r^{\circ}_{A\setminus E,B\cup (N(E)\cap D)}r^{\bullet}_{C\cup E,D\setminus N(E)}$ is a mixed binomial. This Markov step is drawn in Figure~\ref{special}.
\end{lemma}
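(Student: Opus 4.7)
My plan is to prove the lemma by an application of the multigrading criterion (Lemma~\ref{lemma:multigrading}): the binomial lies in $I_{G\rightarrow\downspoon}$ if and only if it is homogeneous of the same total degree and $d_w$ agrees on both monomials for every vertex $w\in V(G)$. Total degree is obviously $2$ on both sides, so the task reduces to a per-vertex bookkeeping.

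I would organize the vertex check by the three pieces of $V(G)=V_1\cup V_2\cup\{v\}$. At $w=v$, only the $\bullet$-variables contribute, giving multidegree $1$ on each side. For $w\in V_1$, I would case-split along the partition of $V_1$ into $E$, $A\setminus E$, $C\setminus A$ (noting $C\cap E=\emptyset$ by hypothesis), and the complement; in each piece the two occurrences of $w$ can be tracked mechanically using $E\subseteq A$ and $E\cap C=\emptyset$, and the counts on LHS and RHS match.

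The one genuinely delicate case is $w\in V_2$. Here the LHS contributes $[w\in B]+[w\in D]$ while the RHS contributes $[w\in B\cup(N(E)\cap D)]+[w\in D\setminus N(E)]$. The subcases $w\notin B$ (any status in $D$) and $w\in B\setminus D$ are immediate. The only nontrivial subcase is $w\in B\cap D$, where the LHS is $2$, the first RHS summand is $1$, and the second RHS summand is $[w\notin N(E)]$. This forces us to verify that $w\notin N(E)$ whenever $w\in B$. I expect this to be the main obstacle, but it resolves cleanly: since the variable $r^\circ_{A,B}$ exists, the set $A\cup B$ is independent, so no edge can connect $E\subseteq A$ to $B$, hence $w\in B$ implies $w\notin N(E)$. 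This restores equality and finishes the $V_2$ check.

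Once all three vertex groups are handled, Lemma~\ref{lemma:multigrading} concludes that the given binomial is in the kernel $I_{G\rightarrow\downspoon}$, which is exactly the claim of being a mixed binomial. I would also remark that the well-definedness hypotheses on the four variables are consistent (for instance, $(C\cup E)\cup(D\setminus N(E))\cup\{v\}$ avoids $N(v)$ because $E\subseteq A\subseteq V_1$ and $v\notin V_1$ together with the fact that the two perturbations $B\to B\cup(N(E)\cap D)$ and $D\to D\setminus N(E)$ are precisely designed to preserve independence on each side), so no additional hypothesis is being smuggled in.
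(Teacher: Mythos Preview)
Your proof is correct and follows essentially the same approach as the paper: both invoke Lemma~\ref{lemma:multigrading} and reduce to a per-vertex multidegree check, with the only nontrivial case being $w\in B\cap D$, resolved by the observation that $E\subseteq A$ and the independence of $A\cup B$ force $B\cap N(E)=\emptyset$. Your organization by the partition $V_1\cup V_2\cup\{v\}$ is a cosmetic variant of the paper's organization by membership in $A,B,C,D$.

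One caveat on your closing remark: the claim that $(C\cup E)\cup(D\setminus N(E))\cup\{v\}$ automatically avoids $N(v)$ ``because $E\subseteq A\subseteq V_1$ and $v\notin V_1$'' is not right---vertices of $V_1$ can certainly be adjacent to $v$, so $E\cap N(v)=\emptyset$ does not follow from $E\subseteq V_1$ alone. The paper is explicit that for some choices of $E$ the set $C\cup E\cup(D\setminus N(E))\cup\{v\}$ need not be independent, and that such cases are simply excluded by the hypothesis that $r^{\bullet}_{C\cup E,D\setminus N(E)}\in R_{G\rightarrow\downspoon}$. This does not affect your proof of the lemma itself, since you correctly rely on the stated hypotheses; just drop or amend that final parenthetical.
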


\begin{proof}
Since we assumed that all the indeterminates are in $R_{G\rightarrow \downspoon}$, we just check the multidegrees by Lemma~\ref{lemma:multigrading}, of each vertex $d_u(m)$ for the monomials $r^{\circ}_{A\setminus E,B\cup (N(E)\cap D)}r^{\bullet}_{C\cup E,D\setminus N(E)}$ and $r^{\circ}_{A,B}r^{\bullet}_{C,D}.$  The indeterminates in the ring $R_{G\rightarrow \downspoon}$ correspond to independent sets in the graph $G$, and it is assumed that all indeterminates in the statement of the lemma are in the ring $R_{G\rightarrow \downspoon}$. For certain sets $E$ the set $C\cup E\cup (D\setminus N(E) )\cup\{v\}$ might not be independent, but that situation is not covered by this lemma. We do not have to worry about independence of the sets corresponding to the indeterminates since they are assumed to be in $R_{G\rightarrow \downspoon}$.

The degree $d_v(m)$ is $1$ for both monomials. The number $d_{u}(m)$ is $1$ for both monomials when $u\in A\Delta C$, and similarly $d_{u}(m)=1$ when $u\in B\Delta D$.  When $u\in A\cap C$ then $d_{u}(m)=2$ for both monomials. The set $A\cup B$ is independent, this implies that $E\cup B$ is independent. If $u\in B\cap D$ then $u\notin N(E)$ since $E\cup (B\cap D)$ is independent. The conclusion is that $d_u(M)=2$ for both monomials if $u\in B\cap D$. Finally $d_u(m)=0$ for both monomials if $u$ is in none of the sets $A,B,C,D$.
\end{proof}
Note that in Lemma~\ref{10} not all subsets of $A$ can be used as a set $E$, it is required that $E\cap N(v)=\emptyset$.

\begin{figure}
\center
\includegraphics[width=\textwidth]{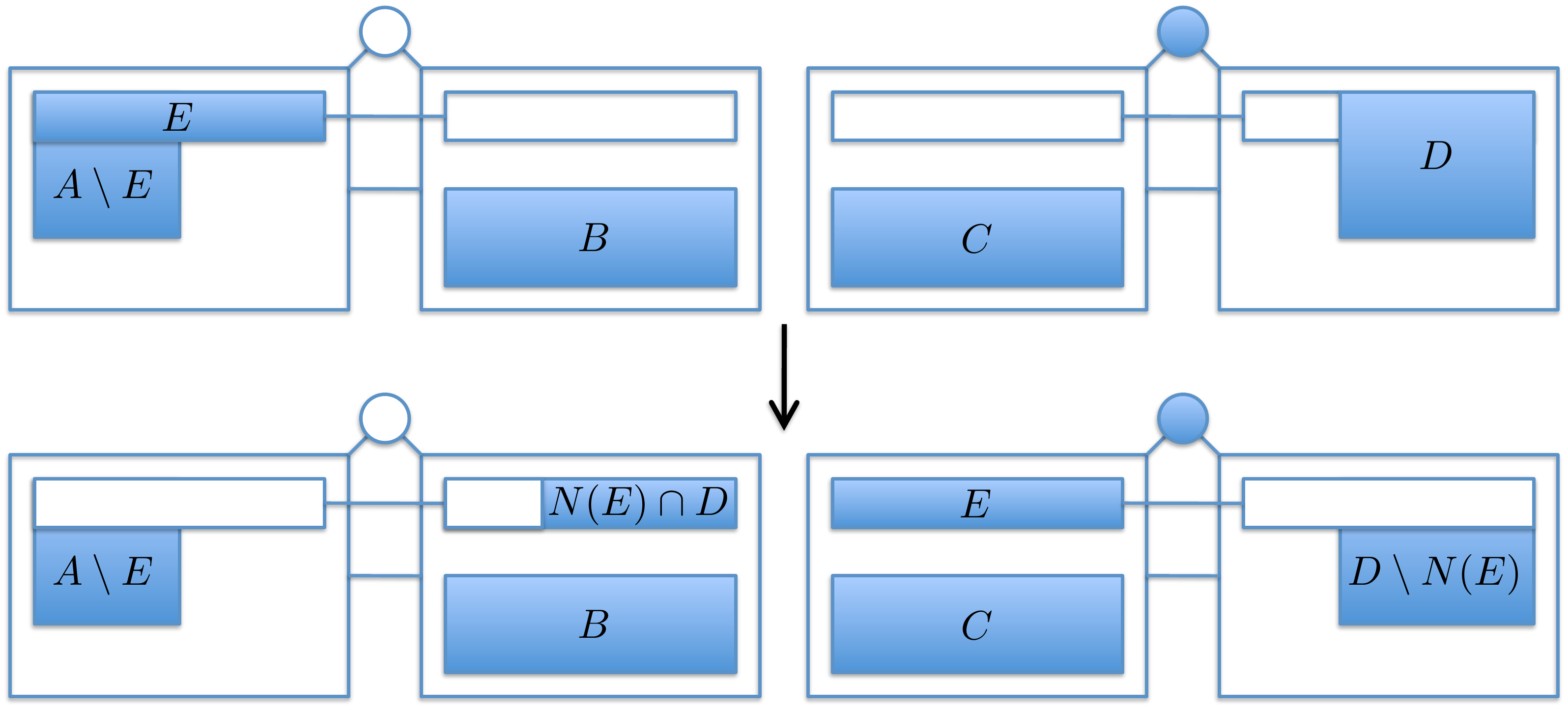}
\caption{The Markov step of Lemma \ref{10}, $r^{\circ}_{A,B}r^{\bullet}_{C,D}\rightarrow r^{\circ}_{A\setminus E,B\cup(N(E)\cap D)}r^{\bullet}_{C\cup E,D\setminus N(E)}$.}\label{special}
\end{figure}

As in the case with bipartite graphs there is a normal form that we want to reach. A first step towards this is  the following lemma.

\begin{lemma}\label{11}
Let $m$ be a monomial in $R_{G\rightarrow \downspoon}$. Then there is a monomial $n$ on intermediate normal form so that $m-n$ is in the ideal generated by the covered and uncovered binomials.
\end{lemma}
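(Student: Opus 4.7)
The plan is to exploit a crucial feature of the two available move-types: the uncovered binomials of Lemma~\ref{8} only rearrange $\circ$-variables, and the covered binomials of Lemma~\ref{9} only rearrange $\bullet$-variables. Writing $m = m^\circ m^\bullet$, I will process the two factors independently and then multiply back together. Since a monomial is in intermediate normal form exactly when its $\circ$-part is sorted with $A_i\subseteq A_{i+1}$, $B_{i+1}\subseteq B_i$ and its $\bullet$-part is sorted with $C_{i+1}\subseteq C_i$, $D_i\subseteq D_{i+1}$, producing $n^\circ$ and $n^\bullet$ separately and setting $n=n^\circ n^\bullet$ will do the job.

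For the $\circ$-part I would simply reuse the argument of Theorem~\ref{thm:bipartiteGrobner}. The uncovered binomial $r^\circ_{A,B}r^\circ_{C,D} - r^\circ_{A\cap C,B\cup D}r^\circ_{A\cup C,B\cap D}$ is identical in form to the bipartite sorting binomial, and vertex $v$ plays no role on either side of the move. Attach to a tuple $(A_1\cup B_1,\dots,A_{d^\circ}\cup B_{d^\circ})$ the weight
\[\omega^\circ = \sum_{i=1}^{d^\circ}(2i-d^\circ)|A_i| + \sum_{i=1}^{d^\circ}(d^\circ-2i)|B_i|,\]
observe as in Theorem~\ref{thm:bipartiteGrobner} that each non-trivial sorting swap strictly decreases $\omega^\circ$, and terminate in finitely many steps at a tuple with $A_i\subseteq A_{i+1}$, $B_{i+1}\subseteq B_i$.

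For the $\bullet$-part the covered binomial has the same shape, so the same weight argument applies after reversing the index order (the target nesting $C_{i+1}\subseteq C_i$, $D_i\subseteq D_{i+1}$ is the reverse of the $\circ$-case). Concretely I would use the weight $\omega^\bullet = \sum_i (d^\bullet-2i)|C_i| + \sum_i (2i-d^\bullet)|D_i|$ and run the same descent. The only technical check is that each intermediate $r^\bullet_{A\cup C,B\cap D}$ and $r^\bullet_{A\cap C,B\cup D}$ actually lies in $R_{G\rightarrow \downspoon}$; this is immediate from Lemma~\ref{9}, because $v$ is in every $\bullet$-set and no neighbor of $v$ appears in any of them.

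I do not expect a genuine obstacle: the lemma is essentially two parallel applications of the bipartite sorting procedure glued together, made possible by the disjoint action of the two move-types on the $\circ$ and $\bullet$ variables. The small thing to be careful about is bookkeeping — confirming that unions and intersections of independent sets stay independent within each type (which uses that $V_1$ and $V_2$ are independent and that $v$'s status is the same across all variables of a given type), and confirming that the weight descent terminates, both of which follow as in Theorem~\ref{thm:bipartiteGrobner}.
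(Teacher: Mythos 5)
Your proposal is correct and matches the paper's proof in every essential respect: decompose $m=m^\circ m^\bullet$, sort each factor independently via the bipartite weight-descent of Theorem~\ref{thm:bipartiteGrobner} (uncovered moves on $m^\circ$, covered moves on $m^\bullet$), and set $n=n^\circ n^\bullet$. The paper makes the final combination explicit via the identity $m-n=(m^\circ-n^\circ)m^\bullet+(m^\bullet-n^\bullet)n^\circ$, which you leave implicit in "process independently and multiply back," but this is the only cosmetic difference.
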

\begin{proof}
First it will be proved that there is a monomial $n^\circ$ on intermediate normal form such that $m^\circ-n^\circ$ is in the ideal generated by uncovered binomials. And similarly that there is a monomial $n^\bullet$ on intermediate normal form such that $m^\bullet-n^\bullet$ is in the ideal generated by covered binomials.

In fact, when reasoning about $m^\circ$ we can ignore $v$ and proceed as in Theorem~\ref{thm:bipartiteGrobner}. And similarly when reasoning about $m^\bullet$, we can ignore $v$ together with $N(v)$. The normal forms reached in the proof of Theorem~\ref{thm:bipartiteGrobner} are then exactly the intermediate normal forms wanted. Recall that in the bipartite case the ideal was generated by binomials $r_{A,B}r_{C,D}-r_{A\cap C,B\cup D}r_{A\cup C,B\cap D}$, and the normal form satisfied the same type of inclusions.

Now
\[m-n=m^\circ m^\bullet-n^\circ n^\bullet=(m^\circ-n^\circ)m^\bullet+(m^\bullet-n^\bullet)n^\circ,\]
so $m-n$ is in the ideal generated by covered and uncovered binomials, since $(m^\circ-n^\circ)$ and $(m^\bullet-n^\bullet)$ are.
\end{proof}

\begin{figure}
\center
\includegraphics[width=\textwidth]{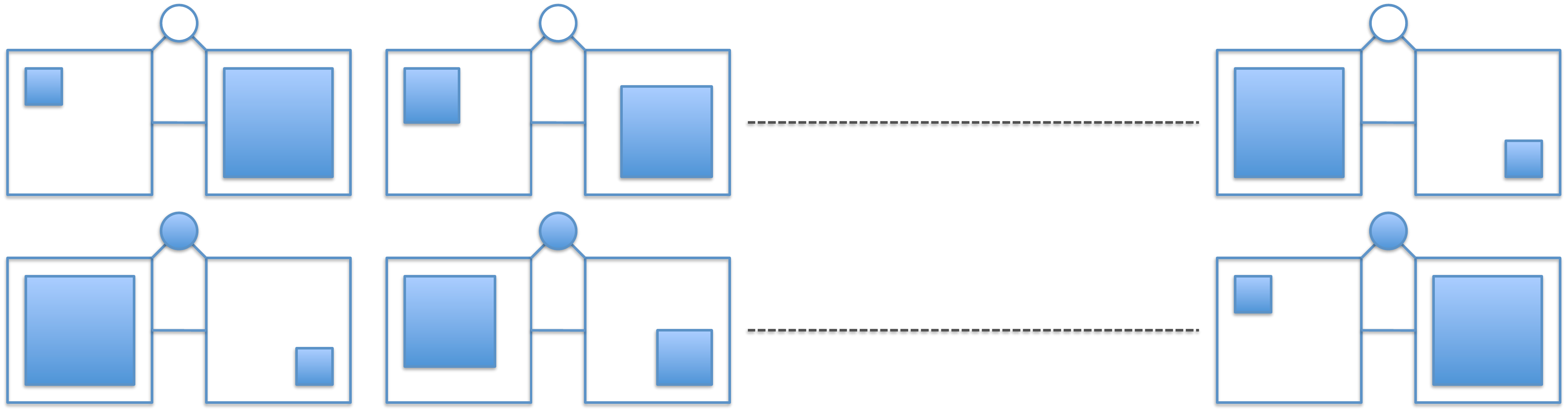}
\caption{A monomial on intermediate normal form in Lemma \ref{11}.} \label{fff}
\end{figure}

Now we will begin to show the existence of some important sets that later will be used to show the existence of the needed mixed generators in Lemma~\ref{10}.

\begin{lemma}\label{13}
Let $m=\prod_{j=1}^{d^\circ} r^{\circ}_{A_j,B_j}\prod_{j=1}^{d^\bullet} r^{\bullet}_{C_j,D_j}$ and $n=\prod_{j=1}^{d^\circ} r^{\circ}_{A'_j,B'_j}\prod_{j=1}^{d^\bullet} r^{\bullet}_{C'_j,D'_j}$ be monomials on intermediate normal form, and $n-m\in I_{G\rightarrow \downspoon}$. If $F=A_1\setminus A'_1\neq\emptyset$, then there is an $i\in[d^\bullet]$ so that $F\nsubseteq C_i$ and $(N(F)\setminus B'_1)\cap D_i=\emptyset$.
\end{lemma}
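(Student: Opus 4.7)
(Plan.) The approach is to show that the smallest index at which $F$ ceases to be contained in some $C_j$ already serves as the required $i$. All the work is multidegree bookkeeping based on the four monotone chains $A_j\subseteq A_{j+1}$, $B_{j+1}\subseteq B_j$, $C_{j+1}\subseteq C_j$, $D_j\subseteq D_{j+1}$ (and their primed analogues).

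First I would define $i_u := \min\{j\in[d^\bullet] : u\notin C_j\}$ for each $u\in F$ and verify that this minimum is actually attained. Since $u\in A_1$ and the $A_j$'s increase, $u$ lies in every $A_j$, contributing $d^\circ$ to $d_u(m)$; since $u\notin A'_1$, the $\circ$-contribution to $d_u(n)$ is strictly smaller. Equating the two multidegrees through Lemma~\ref{lemma:multigrading} yields $|\{j : u\in C'_j\}|\geq |\{j : u\in C_j\}|+1$, which in particular prevents $u$ from lying in every $C_j$, so $i_u\in[d^\bullet]$. Set $i^\ast := \min_{u\in F} i_u$. By construction $F\nsubseteq C_{i^\ast}$, and for each $j<i^\ast$ we have $F\subseteq C_j$; the independence of $C_j\cup D_j\cup\{v\}$ then forces $N(F)\cap D_j=\emptyset$.

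It remains to check $D_{i^\ast}\cap(N(F)\setminus B'_1)=\emptyset$. Suppose for contradiction that some $w$ lies in this intersection. Because $A_1\cup B_1$ is independent and $w\in N(F)\subseteq N(A_1)$, we have $w\notin B_1$ and hence $w\notin B_j$ for every $j$. Combined with the previous paragraph and the monotonicity of the $D_j$, this pins down $|\{j : w\in D_j\}|=d^\bullet-i^\ast+1$. Lemma~\ref{lemma:multigrading} applied to $w$, together with $w\notin B'_j$ for every $j$ (a consequence of $w\notin B'_1$ and the monotonicity of the $B'_j$), produces the same count for $D'_j$, and monotonicity of the $D'_j$ then forces $w\in D'_j$ precisely when $j\geq i^\ast$. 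Now pick any $u\in F$ adjacent to $w$: independence of $C'_j\cup D'_j\cup\{v\}$ together with $w\in D'_j$ compels $u\notin C'_j$ for every $j\geq i^\ast$, so $|\{j : u\in C'_j\}|\leq i^\ast-1$. This contradicts the lower bound $|\{j : u\in C'_j\}|\geq|\{j : u\in C_j\}|+1=i_u\geq i^\ast$ produced in the first step. I expect the main obstacle to be precisely this bookkeeping: one has to keep the $\circ$- and $\bullet$-contributions to each multidegree separate and remember which monotonicity (increasing or decreasing) applies to each of the four families, but once that is done each counting identity closes on itself.
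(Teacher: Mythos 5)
Your proof is correct and takes essentially the same approach as the paper, combining Lemma~\ref{lemma:multigrading}'s multidegree counts with the four monotone chains of the intermediate normal form and the independence of the sets $C_j\cup D_j\cup\{v\}$ and $C'_j\cup D'_j\cup\{v\}$. The only difference is organizational: you explicitly construct $i^\ast=\min_{u\in F}\min\{j:u\notin C_j\}$ and verify both required properties for it in a single contradiction, whereas the paper first establishes existence of an index $i$ with $F\nsubseteq C_i$ and $(N(F)\setminus B'_1)\cap D'_i=\emptyset$ and then separately shows $(N(F)\setminus B'_1)\cap D_i=(N(F)\setminus B'_1)\cap D'_i$.
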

\begin{proof}
Recall from the definition of intermediate normal form that
\[A_{j}\subseteq A_{j+1},A'_{j}\subseteq A'_{j+1},\,\,\,1\le j<d^\circ\]
\[B_{j}\supseteq B_{j+1},B'_{j}\supseteq B'_{j+1},\,\,\,1\le j<d^\circ\]
\[C_{j}\supseteq C_{j+1},C'_{j}\supseteq C'_{j+1},\,\,\,1\le j<d^\bullet\]
\[D_{j}\subseteq D_{j+1},D'_{j}\subseteq D'_{j+1},\,\,\,1\le j<d^\bullet.\]

The first step is to show that there is an $i$ so that $(N(F)\setminus B'_1)\cap D'_i=\emptyset$ and $F$ is not a subset of $C_i$. This will be shown by contradiction.

Assume that $F$ is a subset of $C_i$ whenever $(N(F)\setminus B'_1)\cap D'_i=\emptyset$. The set $F$ is a subset of all sets $A_j$ since it is a subset of $A_1$ and $A_1$ is a subset of all $A_j$. The set $F$ is disjoint from $A'_1$ and have to be a subset of at least one more set $C'_j$ than $C_j$.  In particular $F$ has to be a subset of a $C'_i$ with $(N(F)\setminus B'_1)\cap D'_i\neq\emptyset$. 
Remember that the set $C'_i\cup D'_i$ is independent. This is a contradiction since $C'_i\cup D'_i$ contains both $F$ and parts of $N(F)$. Hence $F$ is not a subset of all $C_i$ such that $(N(F)\setminus B'_1)\cap D'_i=\emptyset$.

We are done if we prove that $(N(F)\setminus B'_1)\cap D_i$ is empty if and only  if $(N(F)\setminus B'_1)\cap D'_i$ is empty, and that is our last step.

The neighborhood of $F$ have no elements in common with $B_1$ since $A_1\cup B_1$ is an independent set containing $F$. It follows that the set $N(F)\setminus B'_1$ is disjoint from both $B_1$ and $B'_1$.

Let $u$ be an element in $N(F)\setminus B'_1$. If $d_u(m)>0$ then $u$ is in some sets $B_l$ and $D_l$ but no sets $A_l$ and $C_l$, and $u$ will be in some sets $B'_l$ and $D'_l$ but no sets $A'_l$ and $C'_l$. Remember that $B_{l+1}\subseteq B_l$ and $B'_{l+1}\subseteq B'_l$, this implies that $u$ can not be in any set $B_l$ or $B'_l$. Recall that $D_l\subseteq D_{l+1}$ and  $D'_l\subseteq D'_{l+1}$. The conclusion is that If $d_u(m)>0$ then $u$ has to be in the last $d_u(m)$ sets $D_l$ and $D'_l$. The element $u$ was arbitrary so $(N(F)\setminus B'_1)\cap D_j=(N(F)\setminus B'_1)\cap D'_j$.
In particular $(N(F)\setminus B'_1)\cap D_j=\emptyset$ if and only if $(N(F)\setminus B'_1)\cap D'_j=\emptyset$.
\end{proof}

One important property of the sets in Lemma~\ref{13} is that $F$ will never contain vertices adjacent to $v$. This will be proved as part of the next lemma which is the main tool in the proof of Theorem~\ref{t4}.

\begin{lemma}\label{14}
Let 
\[
m=\prod_{i\in [d^\circ]} r^{\circ}_{A_i,B_i}\prod_{i\in [d^\bullet]} r^{\bullet}_{C_i,D_i}
\] 
and 
\[
n=\prod_{i\in [d^\circ]} r^{\circ}_{A'_i,B'_i}\prod_{i\in [d^\bullet]} r^{\bullet}_{C'_i,D'_i}
\] 
be monomials on intermediate normal form and $m-n\in I_{G\rightarrow \downspoon}$.
If $F=A_1\setminus A'_1\neq \emptyset$ then there is a non-empty subset $E$ of $F$ such that 
\[
r^{\circ}_{A_1\setminus E,B_1\cup (N(E)\cap D_i)}r^{\bullet}_{C_i\cup E,D_i\setminus N(E)}-r^{\circ}_{A_1,B_1}r^{\bullet}_{C_i,D_i}
\]
is a mixed binomial for some $i\in[d^\bullet]$.
\end{lemma}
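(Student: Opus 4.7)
The plan is to apply Lemma~\ref{10} directly, with input $A=A_1$, $B=B_1$, $C=C_i$, $D=D_i$ for an index $i\in[d^\bullet]$ supplied by Lemma~\ref{13}, and with $E:=F\setminus C_i$. Lemma~\ref{13} guarantees $F\not\subseteq C_i$, so $E\neq\emptyset$; by construction $E\subseteq F\subseteq A_1$ and $E\cap C_i=\emptyset$, so the purely combinatorial hypotheses of Lemma~\ref{10} are satisfied. All that remains is to verify that the two ``new'' indeterminates $r^\circ_{A_1\setminus E,\,B_1\cup(N(E)\cap D_i)}$ and $r^\bullet_{C_i\cup E,\,D_i\setminus N(E)}$ lie in $R_{G\rightarrow\downspoon}$, i.e.\ correspond to independent sets of $G$; Lemma~\ref{10} then produces the required mixed binomial.

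The first auxiliary step is to establish $F\cap N(v)=\emptyset$. Since $F\subseteq A_1\subseteq V_1$, any offending $w$ lies in $V_1\cap N(v)$, and therefore cannot appear in any $B_j,B_j'\subseteq V_2$ or in any $C_j,D_j,C_j',D_j'$ (the $\bullet$-indeterminates correspond to independent sets containing $v$, which force their supports to avoid $N(v)$). Hence $d_w(m)$ only counts the $A_j$'s containing $w$ and $d_w(n)$ only counts the $A_j'$'s containing $w$; the chain conditions $A_1\subseteq A_2\subseteq\cdots$ and $A_1'\subseteq A_2'\subseteq\cdots$ combined with $w\in A_1\setminus A_1'$ then force $d_w(m)=d^\circ$ and $d_w(n)\leq d^\circ-1$, contradicting $m-n\in I_{G\rightarrow\downspoon}$ via Lemma~\ref{lemma:multigrading}. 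In particular $E\cap N(v)=\emptyset$, so $E\cup\{v\}$ is independent.

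The $\bullet$-independence is then immediate: $C_i\cup E\subseteq V_1$ is internally independent, $E$ has no edges to $D_i\setminus N(E)$ by removal, $C_i\cup D_i\cup\{v\}$ is independent to begin with, and $E\cup\{v\}$ is independent by the previous paragraph. The delicate check is the $\circ$-side, and I expect this to be the main obstacle. The strategy is to upgrade Lemma~\ref{13}'s conclusion $(N(F)\setminus B_1')\cap D_i=\emptyset$ to $N(E)\cap D_i\subseteq N(F)\cap D_i\subseteq B_1'$, and to split $A_1\setminus E=(A_1\cap A_1')\cup(F\cap C_i)$ into a ``stays in $n$'' half and a ``hides inside $C_i$'' half. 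The piece $A_1\cap A_1'\subseteq A_1'$ has no edges to $N(E)\cap D_i\subseteq B_1'$ because $A_1'\cup B_1'$ is independent; the piece $F\cap C_i\subseteq C_i$ has no edges to $N(E)\cap D_i\subseteq D_i$ because $C_i\cup D_i$ is independent; and $A_1\setminus E\subseteq A_1$ has no edges to $B_1$ because $A_1\cup B_1$ is independent. Combined with the trivial internal independence of the $V_1$-portion $A_1\setminus E$ and the $V_2$-portion $B_1\cup(N(E)\cap D_i)$, this shows $(A_1\setminus E)\cup B_1\cup(N(E)\cap D_i)$ is independent, completing the hypotheses of Lemma~\ref{10} and producing the desired mixed binomial.
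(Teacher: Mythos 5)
Your proposal is correct and follows the same route as the paper: invoke Lemma~\ref{13} to obtain the index $i$, set $E = F\setminus C_i$, and verify the hypotheses of Lemma~\ref{10}. In fact your treatment of the $\circ$-side independence is somewhat more careful than the paper's, since you explicitly split $A_1\setminus E$ into $(A_1\cap A'_1)\cup(F\cap C_i)$ and check the $F\cap C_i$ piece against $C_i\cup D_i$, a case the paper's one-line justification leaves implicit.
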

\begin{proof}
Pick the $i$ from Lemma~\ref{13}. That is $i\in[d^\bullet]$ so that $F\nsubseteq C_i$ and $(N(F)\setminus B'_1)\cap D_i=\emptyset$. Now set $E=F\setminus C_i$.

It remains to be proven that $(A_1\setminus E)\cup B_1\cup (N(E)\cap D_i)$ and $C_i\cup E\cup(D_i\setminus N(E))\cup\{v\}$ are independent.

The sets $A_i$ and $A'_i$ satisfies $A_i\subseteq A_{i+1}$ and $A'_i\subseteq A'_{i+1}$. Note that this implies that any element $u$ with  $u\in A_1$ and $u\nin A'_1$ is in some set $C'_j$, otherwise the degree $d_u$ would be different for the two monomials $m$ and $n$. The elements in the sets $C'_j$ can not be adjacent to $v$ since the sets $\{v\}\cup C'_j\cup D'_j$ are independent and so $(A_1\setminus A'_1)\cup\{v\}$ is independent. Together with the fact that $C_i\cup D_i\cup \{v\}$ is independent this proves that $C_i\cup E\cup(D_i\setminus N(E))\cup\{v\}$ is independent.

We should verify that $(A_1\setminus E)\cup B_1\cup (N(E)\cap D_i)$ is independent, and indeed it is since $(N(E)\cap D_i)\subseteq B'_1$ and $B'_1\cup (A'_1\cap A_1)$ are independent.

The polynomial $r^{\circ}_{A_1\setminus E,B_1\cup (N(E)\cap D_i)}r^{\bullet}_{C_i\cup E,D_i\setminus N(E)}-r^{\circ}_{A_1,B_1}r^{\bullet}_{C_i,D_i}$ is of the type in Lemma~\ref{10} and all the corresponding sets are independent.
\end{proof}
This show that the normal form is unique, since if two different sets $A_1$ and $A'_1$ are minimal in the sense of the definition of normal form, then $A_1\setminus A'_1\neq\emptyset$. We can then use the binomials in Lemma~\ref{14} to reach smaller sets $A$, and the sets $A_1$ and $A'_1$ were not minimal.

Now we can finish the proof of the main theorem of this section.
\begin{proof}[Proof of Theorem~\ref{t4}]
We will prove that by using Markov steps of degree $2$ it is possible to go from any monomial to a monomial on normal form. Let $m$ be any monomial, the proof will be by induction on $\deg^\circ(m)$. 

Starting from any monomial we can reach a monomial on intermediate normal form using only the covered and uncovered Markov steps, according to Lemma~\ref{11}. The base case of the induction $\deg^\circ(m)=0$ then follows from the fact that in this case the normal form is the intermediate normal form.

Again starting from any monomial we can reach a monomial on intermediate normal form using only the covered and uncovered Markov steps. Let $m=\prod_{i\in [d^\circ]} r^{\circ}_{A_i,B_i}\prod_{i\in [d^\bullet]} r^{\bullet}_{C_i,D_i}$ be the reached intermediate normal monomial.

The induction step will be proved by demonstrating how to go from $m$ to a intermediate normal form monomial satisfying the normal form minimality required for $|A_1|$ and the maximality required for  $|B_1|$.

We will show that if $|A_1|$ do not satisfy the required minimality then there is a Markov step that takes $m$ to a monomial $n$. The only difference between $m^\circ$ and $n^\circ$ is that $n^\circ$ contains $r^\circ_{A^*_1,B^*_1}$ instead of $r^\circ_{A_1,B_1}$, where $A^*_1\subset A_1$ and $B_1\subseteq B^*_1$. This then makes it possible to assume that $A_1$ satisfies the desired minimality, after this a similar argument is used to prove that we can get a maximal $B_1$.

Assume that $A_1$ do not satisfy the minimality in the definition of normal monomial. Then there is another intermediate normal monomial $m'=\prod_{i\in [d^\circ]} r^{\circ}_{A'_i,B'_i}\prod_{i\in [d^\bullet]} r^{\bullet}_{C'_i,D'_i}$ such that  $m-m'\in I_{G\rightarrow \downspoon}$ and $A_1\setminus A'_1\neq \emptyset$. This is the situation covered in Lemma~\ref{14}. The Markov step 
\[
r^{\circ}_{A_1\setminus E,B_1\cup (N(E)\cap D_i)}r^{\bullet}_{C_i\cup E,D_i\setminus N(E)}-r^{\circ}_{A_1,B_1}r^{\bullet}_{C_i,D_i}
\] 
from Lemma~\ref{14}  can be used to go from $m$ to $n$ with 
\[
r^\circ_{A^*_1,B^*_1}=r^{\circ}_{A_1\setminus E,B_1\cup (N(E)\cap D_i)}.
\] 
Now we can assume that $A_1$ satisfies the minimality required for normal form.

The argument to get $B_1$ maximal is similar. The big difference is\linebreak that we might also need mixed Markov steps going from $r^\circ_{A,B}r^\bullet_{C,D\cup \{u\}}$ to $r^\circ_{A,B\cup \{u\}}r^\bullet_{C,D}$.

Recall that  $m=\prod_{i\in [d^\circ]} r^{\circ}_{A_i,B_i}\prod_{i\in [d^\bullet]} r^{\bullet}_{C_i,D_i}$ is on intermediate normal form and $A_1$ is normal form minimal. If $B_1$ is not normal form maximal then some $B_i$ or $D_i$ contains elements that can be added to $B_1$ without breaking the independence of $A_1\cup B_1$. Adding elements like this can then be done by using Markov steps from $r^\circ_{A_1,B_1}r^\circ_{A_i,B_i}$ to $r^\circ_{A_1\cap A_i,B_1\cup B_i}r^\circ_{A_i\cup A_i,B_1\cap B_i}$ or Markov steps from $r^\circ_{A,B}r^\bullet_{C,D\cup \{u\}}$ to $r^\circ_{A,B\cup \{u\}}r^\bullet_{C,D}$. After using such Markov steps the monomial might not be in intermediate normal form. By Lemma~\ref{11} it is still possible to reach an intermediate normal form, this time with the larger $B_1$.

Now it is possible to reach an intermediate normal form satisfying the minimality and maximality required for normal form. By induction it is possible to go from $m/r^{\circ}_{A_i,B_i}$ to the corresponding normal form using the same degree $2$ Markov steps. Together this gives that it is possible to always reach the normal form using the quadratic square-free Markov steps.
\end{proof}

\begin{corollary}
If $G$ is an almost bipartite graph, then $I_{G \rightarrow \downspoon}$ has a normal semigroup and is Cohen-Macaulay.
\end{corollary}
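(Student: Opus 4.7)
The plan is to deduce this immediately from Theorem~\ref{t4} together with Proposition~\ref{prop:squarefreeNormalCM}, which is exactly the packaged form of Hochster's theorem and the square-free initial ideal criterion from Sturmfels' book. The hard work has already been done in the preceding section, so the proof is essentially a one-line citation; the only thing to check before invoking the proposition is that the hypothesis on $I_{G\rightarrow\downspoon}$ being a homogeneous toric ideal is satisfied.

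Concretely, I would first note that $I_{G\rightarrow\downspoon}$ is homogeneous: the edge separator map $\Phi_{G\rightarrow\downspoon}$ sends every variable $r_\phi$ to a product of exactly $|E(G)|$ variables in $S_{G\rightarrow\downspoon}$, so any binomial in its kernel is homogeneous with respect to total degree. Second, by Theorem~\ref{t4} the ideal $I_{G\rightarrow\downspoon}$ admits a quadratic square-free Gr\"obner basis (with respect to some term order, namely the one implicit in the normal form construction of the preceding proof). Third, applying Proposition~\ref{prop:squarefreeNormalCM} to this homogeneous toric ideal with square-free Gr\"obner basis yields directly that the associated semigroup of $I_{G\rightarrow\downspoon}$ is normal and that the quotient ring $R_{G\rightarrow\downspoon}/I_{G\rightarrow\downspoon}$ is Cohen-Macaulay.

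There is no real obstacle to speak of, since both ingredients are in place. The only mild subtlety is to make sure the Gr\"obner basis produced in Theorem~\ref{t4} is indeed a Gr\"obner basis with respect to some monomial order (as opposed to just a generating set), but the normal form argument in that proof exhibits a terminating reduction procedure, which is what one needs to guarantee the existence of such an order. Given this, the corollary follows without further work.
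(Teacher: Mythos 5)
Your proof is exactly the paper's argument: apply Theorem~\ref{t4} to get a quadratic square-free Gr\"obner basis, then invoke Proposition~\ref{prop:squarefreeNormalCM} to conclude normality and Cohen-Macaulayness. The extra remarks about homogeneity and the existence of a term order are harmless elaborations, but they add nothing beyond the paper's one-line citation.
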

\begin{proof}
This follows from Theorem~\ref{t4} and Proposition~\ref{prop:squarefreeNormalCM}.
\end{proof}

\begin{example}
When cycles are not complete graphs they are generated in degree $2$ according to Theorem~\ref{t4}. The first example is $C_4$ with edges $\{12,23,34,14\}$. This cycle is bipartite and we get the generators $r_{\{1,3\}}r_{\emptyset}-r_{\{1\}}r_{\{3\}}$ and $r_{\{2,4\}}r_{\emptyset}-r_{\{2\}}r_{\{4\}}.$

The next example is $C_5$ with edges $\{12,23,34,45,15\}$. This cycle is not bipartite, but if we delete the vertex $1$ it is. The uncovered generators are
$r^\circ_{\{2,4\},\emptyset}r^\circ_{\emptyset,\emptyset}-r^\circ_{\{2\},\emptyset}r_{\{4\},\emptyset}$ and $r_{\emptyset ,\{3,5\}}r_{\emptyset,\emptyset}-r_{\emptyset,\{3\}}r_{\emptyset,\{5\}}.$
 In this case there are no covered binomials needed to generate the ideal. The mixed generators needed are $r^\bullet_{\emptyset,\emptyset}r^\circ_{\{2,4\},\emptyset}-r^\bullet_{\{4\},\emptyset}r^\circ_{\{2\},\emptyset},r^\bullet_{\emptyset,\emptyset}r^\circ_{\emptyset,\{3,5\}}-r^\bullet_{\emptyset,\{3\}}r^\circ_{\emptyset,\{5\}}$ and $r^\bullet_{\{\emptyset,3\}}r^\circ_{\{4\},\emptyset}-r^\bullet_{\{4\},\emptyset}r^\circ_{\emptyset,\{3\}}.$
\end{example}

\section{Polytopes}\label{sec:polytopes}

A \emph{polytope} is the convex hull of a finite set of points $S=\{s_1,\ldots,s_m\}$ in $\mathbb{R}^n$, or equivalently a bounded set consisting of the points satisfying finitely many linear inequalities.  A polytope $P_A$ is associated with the toric ideal $I_A$ where $A$ is a matrix: the polytope is the convex hull of the columns of $A$. The polytope associated with $I_{G\rightarrow H}$ is denoted $P_{G\rightarrow H}$. We give an explicit description of $P_{G\rightarrow H}$ in an independent definition.

\begin{definition}
If $G$ and $H$ are graphs, and 
\[ E = \{ (e,\rho) \mid e\in E(G) \mathrm{\ and\ } \rho : G[e] \rightarrow H \mathrm{\ is\ a\ graph\ homomorphism} \}, \]
then the \emph{polytope of graph homomorphisms from $G$ to $H$}, $P_{G\rightarrow H}$, is the convex hull in $\mathbf{R}^E$
of points $x_\phi$ indexed by graph homomorphisms $\phi$ from $G$ to $H$ as
\[ x_\phi \cdot \mathbf{e}_{e,\rho} = \left\{ 
\begin{array}{cl}
1 & \phi|_{G[e]} = \rho, \\
0 & \phi|_{G[e]} \neq \rho. \\
\end{array}
   \right. \]
\end{definition}

A good reference for polytope theory is \cite{ziegler1995}. This lemma is implicit in \cite{sturmfelsSullivant2008}.

\begin{lemma}\label{15}
If $A$ and $B$ are matrices that give homogeneous toric ideals $I_A$ and $I_B$, and $P_B$ is a face of $P_A$, then $\mu(I_B)\le\mu(I_A)$.
\end{lemma}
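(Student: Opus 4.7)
The plan is to mimic the restriction argument of Theorem~\ref{theorem:restrictMarkovBases}, but in the general polytopal setting. I would begin by encoding the face $P_B$ by a supporting hyperplane: pick a linear functional $\omega$ on $\mathbb{R}^n$ that is maximized on $P_B$. Using homogeneity of $I_A$ I can find a positive vector $w$ with $w^\top A$ constant, and then replace $\omega$ by a suitable affine combination of $\omega$ and $w$ so that the resulting functional $\omega'$ satisfies $\omega'\cdot a_i\ge 0$ for every column $a_i$ of $A$, with equality precisely on the columns of $B$. In other words, the variables of the ring $R_B$ are exactly those $x_i$ with $\omega'\cdot a_i=0$.

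Next I would record the \emph{exchange property}: if $x^u-x^v\in I_A$ and $x^u\in R_B$, then $x^v\in R_B$. Indeed $Au=Av$ forces $\sum_i u_i(\omega'\cdot a_i)=\sum_i v_i(\omega'\cdot a_i)$; the left-hand side is $0$ since every variable in $x^u$ has $\omega'\cdot a_i=0$, while the right-hand side is a sum of nonnegative terms, so each $v_i$ with $\omega'\cdot a_i>0$ must vanish. This is the exact analogue of Lemma~\ref{lemma:ringShrinkTarget} in the abstract polytopal setting, and it is the technical heart of the argument; the little verification that homogeneity lets one choose $\omega'\ge 0$ is the step I expect to need the most care.

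With the exchange property in hand, let $\mathcal{B}$ be a Markov basis of $I_A$ with $\deg\mathcal{B}=\mu(I_A)$ and set $\mathcal{B}':=\mathcal{B}\cap I_B$. To prove $\mathcal{B}'$ generates $I_B$, take any binomial $m-n\in I_B$ and a walk $m=m_0,m_1,\ldots,m_k=n$ in $R_A$ given by $\mathcal{B}$, with $m_i-m_{i-1}=\pm b_i m_i''$ for $b_i\in\mathcal{B}$. Since $m_0\in R_B$, the exchange property applied at each step shows inductively that $m_i\in R_B$ for all $i$, that each cofactor $m_i''$ lies in $R_B$, and therefore that each $b_i$ has both its monomials in $R_B$, forcing $b_i\in I_B$ and hence $b_i\in\mathcal{B}'$. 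This walk now lives entirely in $R_B$ and uses only elements of $\mathcal{B}'$, so $\mathcal{B}'$ is a Markov basis of $I_B$.

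Finally, the degree bound is immediate: $\mu(I_B)\le\deg\mathcal{B}'\le\deg\mathcal{B}=\mu(I_A)$, completing the proof.
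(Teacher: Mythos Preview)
Your argument is correct and is essentially the same as the paper's: both show that a Markov walk for $I_A$ starting in $R_B$ never leaves $R_B$, so restricting a minimal Markov basis of $I_A$ to $I_B$ gives a Markov basis of $I_B$. The only cosmetic difference is that you certify ``intermediate monomial lies in $R_B$'' via a supporting linear functional $\omega'$, whereas the paper argues directly that the normalized $A$-degree of each intermediate monomial is a point of $P_B$ and hence (since $P_B$ is a face) is a convex combination of columns of $B$ only; these are two phrasings of the same fact, and your use of homogeneity to linearize the affine supporting functional is exactly what the paper's normalization by degree accomplishes.
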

\begin{proof}
First we observe that $I_B$ is a natural subset of $I_A$, since we can obtain $B$ by removing columns of $A$. If $x^u-x^v$ is a generator of $I_B$, then $\sum_{i=1}^d x^{w_i}(x^{u_i}-x^{v_i})=x^u-x^v$ where $x^{u_i}-x^{v_i}$ for $i\in[d]$ are generators of $I_A$ and $x^{w_i}$ is some monomial. What could go wrong is that $x^{u_i}-x^{v_i}$ is not in $I_B$ for some $i$. We know that $w_i,u_i,u\ge 0$. We have that $Bu=A(u_i+w_i)$. We also have that $A(u_i+w_i)/\mathrm{deg}(x^{u_i+w_i})$ is a point in $P_A$. The point $A(u_i+w_i)/\mathrm{deg}(x^{u_i+w_i})$ must be in $P_B$ since it is $A(u_i+w_i)/\mathrm{deg}(x^{u})=Bu/\mathrm{deg}(x^{u})$. Hence $u+w_i$ can only be nonzero on entries corresponding to the vertices of the facet $B$.
\end{proof}

This is useful since the geometry of polytopes sometimes is easier to understand than the algebra. It turns out that when we look at ideals from graph homomorphisms $I_{G\rightarrow H}$, certain minors of $H$ have an interpretation in the polytope.
\begin{lemma}\label{16}
If $G$ and $H$ are graphs, and $v$ is a vertex of $H$, then $P_{G\rightarrow H\setminus v}$ is a face of $P_{G\rightarrow H}$.
\end{lemma}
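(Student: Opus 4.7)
The plan is to exhibit an explicit supporting hyperplane of $P_{G\rightarrow H}$ whose intersection with the polytope is $P_{G\rightarrow H\setminus v}$ after the natural coordinate embedding. Let $E_v \subseteq E$ denote the subset of pairs $(e,\rho)$ for which $v \in \rho(V(G[e]))$, so that $E \setminus E_v$ naturally indexes the coordinates defining $P_{G\rightarrow H\setminus v}$. Consider the linear functional
\[ L(x) = \sum_{(e,\rho) \in E_v} x_{e,\rho}. \]
Each vertex $x_\phi$ of $P_{G\rightarrow H}$ has entries in $\{0,1\}$, so $L(x_\phi) \geq 0$, and by convexity $L \geq 0$ on $P_{G\rightarrow H}$. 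Thus $F := P_{G\rightarrow H} \cap \{L = 0\}$ is a face.

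Next I would determine which vertices lie in $F$. By definition of $x_\phi$, the value $L(x_\phi)$ counts the edges $e \in E(G)$ on which $\phi|_{G[e]}$ uses $v$, equivalently the edges of $G$ with at least one endpoint mapped to $v$ by $\phi$. So $L(x_\phi) = 0$ precisely when $\phi$ sends no non-isolated vertex of $G$ to $v$. Since isolated vertices of $G$ contribute no coordinate to $x_\phi$, such points are exactly the vertices of $P_{G\rightarrow H\setminus v}$ viewed inside $\mathbb{R}^E$ via the inclusion of $\mathbb{R}^{E\setminus E_v}$.

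To finish I would note that $F$ actually lives inside the coordinate subspace $\{x_{e,\rho} = 0 : (e,\rho) \in E_v\} \cong \mathbb{R}^{E \setminus E_v}$: since points of $P_{G\rightarrow H}$ have nonnegative coordinates and $L$ is their sum over $E_v$, the equation $L = 0$ forces every $E_v$-coordinate to vanish on $F$. Combined with the vertex identification from the previous step, this gives $F = P_{G\rightarrow H\setminus v}$ as convex hulls.

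The main thing to watch is the isolated-vertex case: homomorphisms $\phi$ that differ only on isolated vertices of $G$ produce the same point $x_\phi$, which means the correspondence between homomorphisms and vertices is many-to-one. I do not expect this to be a real obstacle, since the polytope is determined by its set of vertex points rather than by the homomorphisms themselves, but it is the one piece of bookkeeping worth flagging in the write-up.
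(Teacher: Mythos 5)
Your argument is correct and follows essentially the same approach as the paper: the paper's proof intersects $P_{G\rightarrow H}$ with the coordinate hyperplanes $x_{e,\rho}=0$ for $(e,\rho)\in E_v$ and appeals to the $0/1$-polytope property to conclude this is a face, while you package the same intersection as the zero set of the single nonnegative functional $L=\sum_{(e,\rho)\in E_v}x_{e,\rho}$, which makes the supporting-hyperplane step explicit. Your extra care about isolated vertices is a reasonable clarification of a subtlety the paper leaves implicit, but the underlying proof is the same.
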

\begin{proof}
Intersect $P_{G\rightarrow H}$ with the hyperplanes $x_{e,\phi}=0$ if $v\in\phi(e)$. The resulting polytope is the convex hull of the vectors coming from homomorphism where no vertex is mapped to $v$, that is $P_{G\rightarrow H \setminus v}$. The intersection is a face since $P_{G \rightarrow H}$ is a $0/1$-polytope.
\end{proof}

\begin{lemma}\label{t8}
If $G$ and $H$ are graphs, and $e$ is an edge of $H$, then $P_{G\rightarrow H\setminus e}$ is a face of $P_{G\rightarrow H}$.
\end{lemma}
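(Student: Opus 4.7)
The plan is to mimic the proof of Lemma~\ref{16} and cut out $P_{G\rightarrow H\setminus e}$ from $P_{G\rightarrow H}$ by a supporting hyperplane that zeroes out exactly the coordinates corresponding to ``using $e$''. The key observation is that since $e$ is only an edge (not a vertex), $V(H\setminus e) = V(H)$, so the only change in the indexing sets between the two polytopes is that certain edge-homomorphism pairs disappear.

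First I would identify the relevant coordinates. The index set $E$ of $P_{G\rightarrow H}$ splits into $E_e$, consisting of pairs $(e',\rho)$ where $\rho : G[e'] \rightarrow H$ maps the edge of $G[e']$ onto $e$, and its complement $E'$. By Lemma~\ref{lemma:expandTarget} (applied to $H\setminus e \subseteq H$), a homomorphism $G[e'] \rightarrow H$ factors through $H\setminus e$ if and only if it does not use the edge $e$, so $E'$ is exactly the index set of $P_{G\rightarrow H\setminus e}$.

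Next I would consider the linear functional
\[ \ell(x) = \sum_{(e',\rho)\,\in\, E_e} x_{e',\rho}. \]
Because $P_{G\rightarrow H}$ is a $0/1$-polytope by construction, $\ell$ is nonnegative on it, so $\{\ell = 0\}$ is a supporting hyperplane and cuts out a face $F$ of $P_{G\rightarrow H}$. By the defining formula for $x_\phi$, the vertex $x_\phi$ lies in $F$ precisely when $\phi|_{G[e']} \neq \rho$ for every $(e',\rho) \in E_e$, i.e., when $\phi$ sends no edge of $G$ to $e$. By Lemma~\ref{lemma:expandTarget} again, this is equivalent to $\phi \in \mathrm{Hom}(G, H\setminus e)$.

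Finally I would identify $F$ with $P_{G\rightarrow H\setminus e}$. Since every vertex of $F$ has zero coordinates outside $E'$, the face $F$ lies in the coordinate subspace $\mathbb{R}^{E'} \subseteq \mathbb{R}^{E}$, and under this identification its vertices are exactly the defining vertices of $P_{G\rightarrow H\setminus e}$. I do not anticipate any real obstacle; the argument is a straightforward adaptation of Lemma~\ref{16}, and the only bookkeeping to check is that the indexing sets $E'$ and that of $P_{G\rightarrow H\setminus e}$ agree, which follows from $V(H\setminus e) = V(H)$.
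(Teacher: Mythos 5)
Your proposal is correct and takes essentially the same route as the paper: both intersect $P_{G\rightarrow H}$ with the coordinate hyperplanes $x_{f,\rho}=0$ for those $(f,\rho)$ with $\rho(f)=e$, and both justify that this gives a face because $P_{G\rightarrow H}$ is a $0/1$-polytope. Your phrasing via a single summed functional $\ell$ is a cosmetic repackaging of the paper's "intersect with the hyperplanes" argument, and your invocations of Lemma~\ref{lemma:expandTarget} are doing slightly more ceremony than needed (the needed equivalence, that a one-edge homomorphism lands in $H\setminus e$ iff it does not hit $e$, is immediate), but the content is the same.
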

\begin{proof}
This is similar to the proof of Lemma~\ref{16} but with the hyperplanes $x_{f,\phi}=0$ when $\phi(f)=e$. 
\end{proof}

\begin{theorem}\label{theo:faceInclusion}
If $G$, $H_1$ and $H_2$ are graphs with $H_1 \subseteq H_2$, then $P_{G\rightarrow H_1}$ is a face of $P_{G\rightarrow H_2}$
\end{theorem}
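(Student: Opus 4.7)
The plan is to reduce the theorem to the two preceding lemmas by expressing the inclusion $H_1 \subseteq H_2$ as a finite sequence of single-edge and single-vertex deletions, and then invoking the standard fact from polytope theory that a face of a face is a face of the ambient polytope.

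First I would produce an intermediate graph $H'$ with $V(H') = V(H_2)$ and $E(H') = E(H_1)$, obtained from $H_2$ by deleting the edges in $E(H_2) \setminus E(H_1)$ one at a time. Applying Lemma~\ref{t8} once per deleted edge gives a chain of faces terminating in $P_{G \rightarrow H'}$ as a face of $P_{G \rightarrow H_2}$. Then I would pass from $H'$ to $H_1$ by deleting the vertices of $V(H_2) \setminus V(H_1)$ one at a time; each such vertex is isolated in the current graph because every remaining edge is already an edge of $H_1$ and therefore has both endpoints in $V(H_1)$. So Lemma~\ref{16} applies at each step and exhibits $P_{G \rightarrow H_1}$ as a face of $P_{G \rightarrow H'}$.

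Finally, I would chain these together: since $P_{G \rightarrow H_1}$ is a face of $P_{G \rightarrow H'}$ and $P_{G \rightarrow H'}$ is a face of $P_{G \rightarrow H_2}$, the transitivity of the face relation for convex polytopes (the intersection of supporting hyperplanes from both stages is again a supporting hyperplane of the original polytope) gives that $P_{G \rightarrow H_1}$ is a face of $P_{G \rightarrow H_2}$.

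There is no serious obstacle here; the only point that requires a moment of care is the order in which one performs the two kinds of deletions. Deleting vertices first could introduce the nuisance of having to first discard edges incident to a vertex not in $V(H_1)$ that survives into the intermediate stage, which is not directly covered by Lemma~\ref{16}. Performing all edge deletions first guarantees that the vertices eliminated in the second phase are isolated, which is exactly the setting handled by Lemma~\ref{16}.
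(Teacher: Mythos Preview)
Your proof is correct and follows the same approach as the paper: reduce $H_2$ to $H_1$ by a sequence of edge and vertex deletions, invoke Lemmas~\ref{16} and~\ref{t8} at each step, and use transitivity of the face relation. One small remark: your caution about the order of deletions is unnecessary, since Lemma~\ref{16} applies to an arbitrary vertex $v$ of $H$ (deleting $v$ automatically removes its incident edges), not only to isolated vertices.
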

\begin{proof}
The graph $H_1$ can be reduced to $H_2$ be removing vertices and edges. By repeated use of Lemma~\ref{16} and Lemma~\ref{t8} we are done.
\end{proof}

Theorem~\ref{theo:faceInclusion} and Lemma~\ref{15} combined gives an alternative and nice proof of Corollary~\ref{corollary:restrictMarkovBases}, that $\mu(P_{G\rightarrow H_1}) \leq \mu(P_{G\rightarrow H_2})$. Contracting an edge is in general not possible in any nice way, for example the polytope $P_{C_3\rightarrow C_4}$ is empty but $P_{C_3\rightarrow C_3}$ is not, and neither is the polytope $P_{C_3\rightarrow C'_3}$ where $C'_3$ is $C_3$ with one extra loop. 

\subsection{Stable set polytopes}
In optimization theory it is more common to refer to independent sets as stable sets. The \emph{stable set polytope} of a graph $G$ is a polytope in $\mathbb{R}^{V(G)}$ defined as the convex hull of the points $x_S$, indexed by stable sets $S$ of $G$ as
\[ x_S \cdot \mathbf{e}_v = \left\{ 
\begin{array}{cl}
1 & v \in S, \\
0 & v \not\in S. \\
\end{array}
   \right. \]

The stable set polytope always has the following inequalities among its defining inequalities: $0\le x_i\le1$ and $\sum_{i\in K} x_i\le 1$ when $K$ is a maximal complete subgraph of $G$. It is known that these are the only defining inequalities when the graph $G$ is perfect \cite{Grotschel}. Another type of defining inequality that graphs containing odd holes (that is, induced odd cycles) has is $\sum_{i\in C}x_i\le \lfloor(|C|-1)/2\rfloor$ where $C$ is an odd hole of $G$. These are the defining inequalities for a large class of graphs, including the $K_4$-minor free graphs \cite{mahjoub1988}. Many important optimization problem can be stated as minimizing a linear form over a stable set polytope. It is therefore of great interest to understand their facet structure.

\begin{proposition}\label{prop:stableSetPolytope}
The polytope of graph homomorphisms $P_{G \rightarrow \downspoon}$ is isomorphic to the stable set polytope of $G.$
\end{proposition}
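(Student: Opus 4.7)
The plan is to construct an explicit affine isomorphism between $P_{G\rightarrow\downspoon}$ and the stable set polytope of $G$ by writing down maps $L,M$ in both directions and checking that they are mutually inverse on vertices. Let $a$ denote the looped and $b$ the unlooped vertex of $\downspoon$. A graph homomorphism $\phi\colon G\to\downspoon$ is determined by, and determines, the independent set $S_\phi=\phi^{-1}(b)$, so $\mathrm{Hom}(G,\downspoon)$ is in bijection with the independent sets of $G$. For each edge $e=uv$ of $G$ there are exactly three graph homomorphisms $G[e]\to\downspoon$: one $\rho^u_e$ sending $u\mapsto b$ and $v\mapsto a$, the mirror $\rho^v_e$, and one $\rho^\emptyset_e$ sending both endpoints to $a$; the ambient space of $P_{G\rightarrow\downspoon}$ carries one coordinate per such triple $(e,\rho)$.

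To build $L\colon\mathbb{R}^E\to\mathbb{R}^{V(G)}$, pick for each vertex $w$ of $G$ any incident edge $e(w)$ and set $L(x)_w=x_{(e(w),\rho^w_{e(w)})}$. A one-line check gives $L(x_\phi)_w=1$ exactly when $\phi(w)=b$, so $L(x_\phi)=x_{S_\phi}$, and $L$ carries the vertices of $P_{G\rightarrow\downspoon}$ bijectively onto those of the stable set polytope. For the inverse I define an affine map $M\colon\mathbb{R}^{V(G)}\to\mathbb{R}^E$ by
\[ M(y)_{(e,\rho^u_e)}=y_u,\qquad M(y)_{(e,\rho^v_e)}=y_v,\qquad M(y)_{(e,\rho^\emptyset_e)}=1-y_u-y_v \]
for every edge $e=uv$ of $G$. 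The key observation that makes $M$ the right map is that for any independent set $S$ and any edge $uv$ one has $x_{S,u}x_{S,v}=0$, so the coordinates of $M(x_S)$ actually lie in $\{0,1\}$, and a three-case check (depending on which, if any, of $u,v$ lies in $S$) identifies $M(x_S)$ with $x_{\phi_S}$.

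Because $L$ and $M$ are mutually inverse on the vertex sets and both are affine, they extend to mutually inverse affine maps between the convex hulls, which is the desired isomorphism. The main obstacle is the well-definedness of $L$: it used an arbitrary choice of incident edge $e(w)$, but on the polytope itself this choice is immaterial, because on every vertex $x_\phi$ the coordinate $x_{(f,\rho^w_f)}$ equals $\mathbf{1}[\phi(w)=b]$ for any edge $f$ containing $w$. A related subtlety is the tacit assumption that $G$ has no isolated vertex; otherwise an isolated vertex contributes no coordinate to the ambient space of $P_{G\rightarrow\downspoon}$, two homomorphisms differing only there collapse to the same $x_\phi$, and the proposition has to be read modulo this standard hypothesis.
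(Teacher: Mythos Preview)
Your argument is correct and follows essentially the same approach as the paper: both proofs identify the three local homomorphisms $\rho^u_e,\rho^v_e,\rho^\emptyset_e$ on each edge, observe that the coordinate $x_{(e,\rho^w_e)}$ depends only on the vertex $w$ and not on the incident edge chosen, and use this to project $P_{G\rightarrow\downspoon}$ onto the stable set polytope. Your version is in fact more explicit than the paper's, since you write down the affine inverse $M$ and verify that $L$ and $M$ are mutually inverse on vertices, whereas the paper only describes the forward map and appeals to homogeneity for the $\rho^\emptyset_e$ coordinates; you also correctly flag the isolated-vertex caveat, which the paper leaves implicit.
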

\begin{proof}
This proof builds on the same basic idea as that of Lemma~\ref{lemma:multigrading}.
Let 
\[ E  = \{ (e,\rho) \mid e\in E(G) \mathrm{\ and\ } \rho : G[e] \rightarrow \downspoon \mathrm{\ is\ a\ graph\ homomorphism} \}. \]
The polytope $P_{G \rightarrow \downspoon}$ in $\mathbb{R}^E$ is the convex hull of points $x_\phi$ indexed by graph homomorphisms $\phi$ from $G$ to $\downspoon$ as
\[ x_\phi \cdot \mathbf{e}_{e,\rho} = \left\{ 
\begin{array}{cl}
1 & \phi|_{G[e]} = \rho, \\
0 & \phi|_{G[e]} \neq \rho. \\
\end{array}
\right. \]
Any graph homomorphism $\rho: G[e] \rightarrow \downspoon$ can be encoded by the edge $e$ (as a subset of $V(G)$) and the subset $s$
of $e$ that is mapped onto the unlooped vertex of $\downspoon$. That is,
\[ E = \{ (e,s)  \mid e\in E(G),\, s\subset e,\, |s|\leq 1 \}. \]
The graph homomorphisms $\phi$ from $G$ to $\downspoon$ can similarly be encoded by the independent sets $S$ of $G$. So, $P_{G \rightarrow \downspoon}$ is the convex hull of points $x_S$ with $S$ independent, defined by
\[ x_S \cdot \mathbf{e}_{e,s} = \left\{ 
\begin{array}{cl}
1 & S\cap e = s, \\
0 & S\cap e \neq s. \\
\end{array}
\right. \]
If $v$ is a vertex of $G$, then the value of $x_S \cdot \mathbf{e}_{e,\{v\}}$ is the same for all edges $e$ containing $v$, by the same argument as in
the proof of Lemma~\ref{lemma:multigrading}. The values of all $x_S \cdot \mathbf{e}_{e,\emptyset}$ are determined by that the ideals of graph homomorphisms are homogeneous. Thus the isomorphism from the polytope $P_{G \rightarrow \downspoon}$ to stable set polytope of $G$ is defined by sending $ \mathbf{e}_{e,\{v\}}$ to $\mathbf{e}_v$ for all vertices $v$ of $G$. 
\end{proof}

A $d$-dimensional polytope is \emph{simple} if every vertex is in exactly $d$ facets. If a polytope $P$ associated to a toric variety $X$ is not simple, then the variety is not smooth, as explained in Section 2.1 of \cite{fulton1993}.

\begin{example}\label{ex:polyC4Ind}
In this example we present the polytope $P_{C_4 \rightarrow \downspoon}$. This is the matrix defining the toric variety $X_{C_4 \rightarrow \downspoon},$ with the columns indexed by the independent sets of $C_4,$ and the rows indexed as in the proof of Proposition~\ref{prop:stableSetPolytope}.
\[
\begin{array}{c|ccccccc}
&{\emptyset}&{\{1\}}&{\{2\}}&{\{3\}}&{\{4\}}&{\{1,3\}}&{\{2,4\}}\\
\hline
{12,\emptyset}&1&0&0&1&1&0&0\\
{12,\{1\}}&0&1&0&0&0&0&1\\
{12,\{2\}}&0&0&1&0&0&1&0\\
{23,\emptyset}&1&1&0&0&1&0&0\\
{23,\{2\}}&0&0&1&0&0&1&0\\
{23,\{3\}}&0&0&0&1&0&0&1\\
{34,\emptyset}&1&1&1&0&0&0&0\\
{34,\{3\}}&0&0&0&1&0&0&1\\
{34,\{4\}}&0&0&0&0&1&1&0\\
{14,\emptyset}&1&0&1&1&0&0&0\\
{14,\{1\}}&0&1&0&0&0&0&1\\
{14,\{4\}}&0&0&0&0&1&1&0\\
\end{array}.
\]
The polytope $P_{C_4 \rightarrow \downspoon}$ in $\mathbb{R}^{12}$ is the convex hull of the seven column vectors of the matrix. The isomorphic
stable set polytope of $C_4$ is given by only remembering one row for each vertex of $G$:
\[
\begin{array}{c|ccccccc}
&{\emptyset}&{\{1\}}&{\{2\}}&{\{3\}}&{\{4\}}&{\{1,3\}}&{\{2,4\}}\\
\hline
\{1\}&0&1&0&0&0&0&1\\
\{2\}&0&0&1&0&0&1&0\\
\{3\}&0&0&0&1&0&0&1\\
\{4\}&0&0&0&0&1&1&0\\
\end{array}.
\]
Using the polymake software \cite{gawrilowJoswig2000} we get that the polytope has eight facets and they are spanned by these collections of independent sets of $C_4$:
\[ 
\begin{array}{cc}
\{\{1\},\{2\},\{1,3\},\{2,4\}\} & \{\{2\},\{3\},\{1,3\},\{2,4\}\}\\
 \{\{1\},\{4\},\{1,3\},\{2,4\}\} & \{\{3\},\{4\},\{1,3\},\{2,4\}\} \\
  \{\emptyset, \{1\},\{2\},\{3\},\{1,3\}\} &   \{\emptyset, \{1\},\{3\},\{4\},\{1,3\}\} \\
  \{\emptyset, \{1\},\{2\},\{4\},\{2,4\}\} &   \{\emptyset, \{2\},\{3\},\{4\},\{2,4\}\} \\
\end{array}.
\]
The vertices of the independent sets $\emptyset, \{1\}, \{2\}, \{3\}, \{4\}$ are in four facets, and those of $\{1,3\}$ and $\{2,4\}$ are in six facets.
The toric variety $X_{C_4 \rightarrow \downspoon}$ is not smooth since the polytope $P_{C_4 \rightarrow \downspoon}$ is not simple, which also can be seen from an easy Jacobian calculation. In Figure~\ref{fig:Schlegel} is a Schlegel diagram of $P_{C_4\rightarrow \downspoon}$ drawn.
\end{example}
\begin{figure}
\center
\includegraphics[scale=0.5]{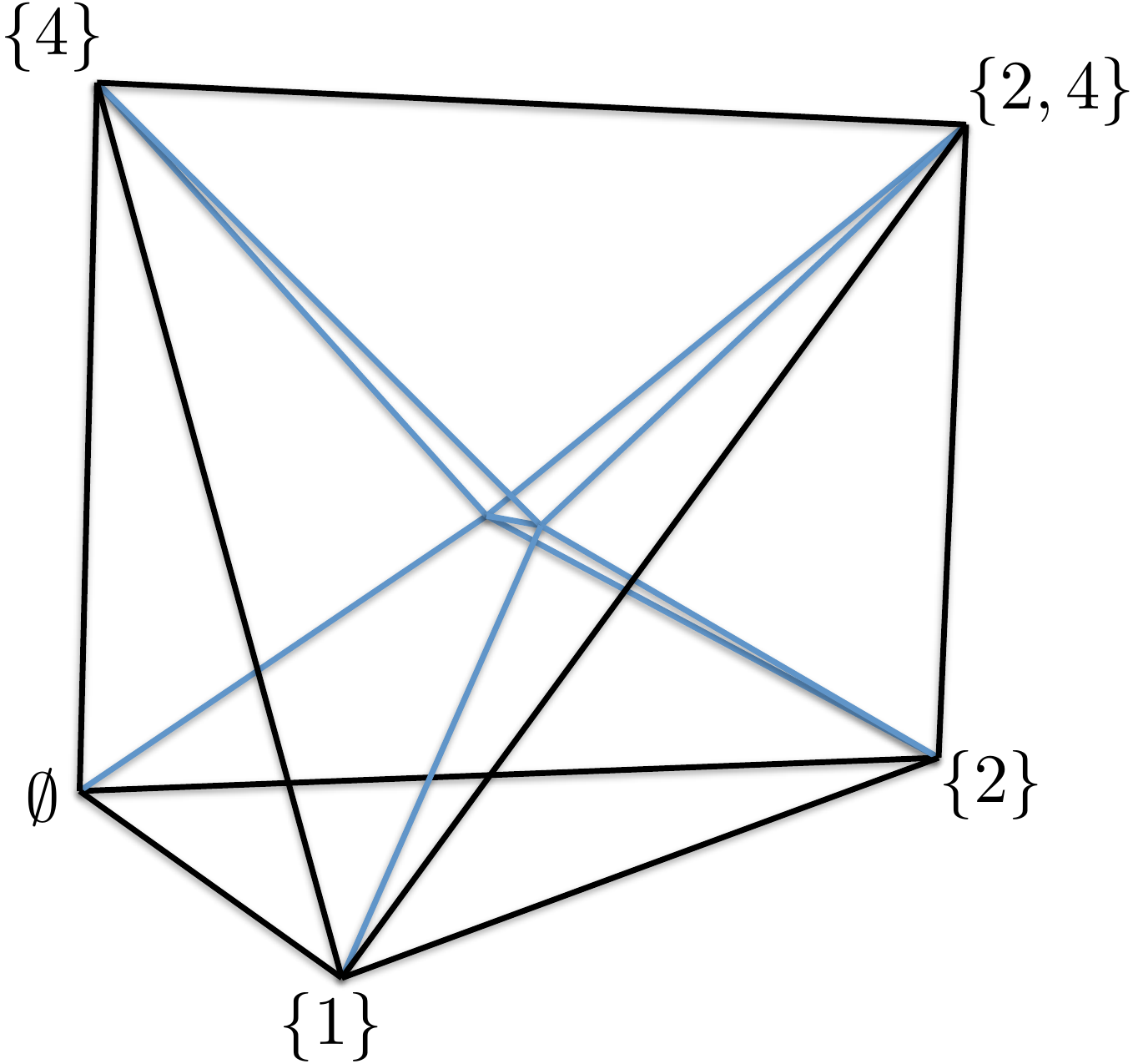}
\caption{A Schlegel diagram of $P_{C_4\rightarrow \downspoon}$ with the edges of the facet projected on in black. The left interior vertex is $\{3\}$ and the right interior vertex is $\{1,3\}$.  This polytope is described in Example~\ref{ex:polyC4Ind}.} \label{fig:Schlegel}
\end{figure}

\section{Algebras with a straightening law}\label{sec:ASL}

Algebras with a straightening law were introduced and studied in for example \cite{DeConciniEisenbudProcesi1982, hibiWatanabe1985, hibi1987}.
This is the basic setup: As before let $\mathbf{k}$ be a field. Let $R$ be a  $\mathbf{k}$-algebra with a generating subset $D$, and assume that there is a poset structure on $D$. A monomial is a product $\alpha_1\alpha_2\cdots\alpha_p$ where $\alpha_i\in D$, and it is \emph{standard} if  $\alpha_1\le\alpha_2\le\cdots\le\alpha_p$ in the poset.
The ring $R$ is \emph{an algebra with straightening laws on $D$} if
\begin{itemize}
\item[(1)] The set of standard monomials is a basis of the algebra $R$ as a vector space over $k$, and
\item[(2)] if $\alpha$ and $\beta$ in $D$ are incomparable and $\alpha\beta=\sum{r_i\gamma_{i1}\gamma_{i2}\cdots\gamma_{ip_i}},$
where $0\neq r_i\in \mathbf{k}$ and $\sum{r_i\gamma_{i1}\gamma_{i2}\cdots\gamma_{ip_i}}$ is a linear combination of standard monomials, then $\gamma_{i1}\le\alpha,\beta$ for every $i$.
\end{itemize}
Hibi~\cite{hibi1987} considered the case when $D$ is a distributive lattice and $R$ is defined to  satisfy the relations $\alpha\beta=(\alpha\vee\beta)(\alpha\wedge\beta)$, for $\alpha,\beta\in D$. He proved that $R$ is an integral domain, an algebra with straightening laws, normal semigroup and Cohen-Macaulay. This can be translated into the language of ideals of graph homomorphisms.

First we recall some basic poset theory, for example from the textbook \cite{stanley1998}. A lower ideal $L$ of a poset $P$ is a subset of $P$ satisfying that if $p\leq q\in L$ then $p\in L$. The lower ideals are ordered by inclusion in a poset $\mathcal{J}(P)$. According to Birkhoff's theorem any distributive poset $D$ is isomorphic to a poset of lower ideals $\mathcal{J}(P)$.

Now we relate to Hibi's results. For any distributive lattice $D$ isomorphic to $\mathcal{J}(P)$, define the bipartite graph $B_P$ with vertex set $P\times \{l,u\}$ and edges $(p,l)-(q,u)$ whenever $p \geq q$ in $P$.

\begin{lemma}\label{lemma:hibify}
There is a bijection $\xi$ from the set of lower ideals of $P$ to the set of maximal independent sets of $B_P$ by $\xi(L) = L\times \{l\} \cup (P \setminus L) \times \{u \}$.
\end{lemma}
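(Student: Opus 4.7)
The plan is to verify the four standard requirements for $\xi$ to be a bijection between the two specified sets: that $\xi(L)$ is independent, that it is maximal, that $\xi$ is injective, and that $\xi$ is surjective. Injectivity is immediate from the formula (reading off $L$ from the $l$-layer), so the work lies in the other three.

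For independence, I would observe that every edge of $B_P$ has the form $(p,l)$--$(q,u)$ with $p\geq q$; if both endpoints lay in $\xi(L)$ we would have $p\in L$ and $q\in P\setminus L$, contradicting that $L$ is a lower ideal. For maximality, each vertex outside $\xi(L)$ is either of the form $(p,l)$ with $p\notin L$ — in which case $(p,u)\in \xi(L)$ gives an edge via $p\geq p$ — or of the form $(q,u)$ with $q\in L$, in which case $(q,l)\in \xi(L)$ gives the required edge.

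Surjectivity is the step requiring the most care, and is where I expect the main (minor) obstacle. Given a maximal independent set $M$, I would define $L := \{p\in P \mid (p,l)\in M\}$ and try to show $L$ is a lower ideal with $\xi(L)=M$. For the lower-ideal property, assume $p\in L$ and $q\leq p$ but $q\notin L$. Then $(q,l)\notin M$, so by maximality $(q,l)$ has a neighbor $(r,u)\in M$ with $r\leq q$; but then $r\leq p$ makes $(p,l)$--$(r,u)$ an edge between two elements of $M$, a contradiction. For $\xi(L)=M$, the $l$-layer agrees by construction, and on the $u$-layer: if $p\in L$ then $(p,l)\in M$ forces $(p,u)\notin M$ via the edge $(p,l)$--$(p,u)$; conversely if $p\notin L$ and $(p,u)\notin M$, maximality forces a neighbor $(r,l)\in M$ of $(p,u)$ with $r\geq p$, so $r\in L$, and then the lower-ideal property would give $p\in L$, a contradiction.

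The only subtlety throughout is keeping the orientation of the comparison straight, since edges go from $(p,l)$ to $(q,u)$ precisely when $p\geq q$, which is what forces the direction of the lower-ideal argument above. Once this orientation is fixed, each of the four verifications is a one-line application of either the definition of $B_P$, the definition of a lower ideal, or the maximality of $M$.
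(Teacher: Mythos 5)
Your proof is correct and slightly more thorough than the paper's. The paper takes a cardinality shortcut: it first argues that every maximal independent set of $B_P$ has exactly $|P|$ elements (because $P\times\{l\}$ is a $|P|$-element independent set, and the edges $(p,l)$--$(p,u)$ cap any independent set at size $|P|$), then proves $\xi(L)$ is independent, and for surjectivity writes a maximal independent set as $L_1\times\{l\}\cup L_2\times\{u\}$, deducing $L_1\cap L_2=\emptyset$ from independence and $L_2=P\setminus L_1$ from maximality. You instead verify maximality of $\xi(L)$ element-by-element and, for surjectivity, directly check that the set $L$ read off from the $l$-layer is a lower ideal. The latter step is actually the one the paper leaves implicit: it concludes ``$\xi(L_1)=L_1\times\{l\}\cup L_2\times\{u\}$'' without explicitly confirming that $L_1$ lies in the domain of $\xi$, i.e.\ that $L_1$ is a lower ideal (though this does follow quickly from independence of $M$ once $L_2=P\setminus L_1$ is known). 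Your argument makes that explicit, at the cost of invoking maximality twice rather than once; the paper's cardinality observation is a cleaner way to dispose of maximality once and for all, and it also buys the remark that all maximal independent sets of $B_P$ have the same size.
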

\begin{proof}
First note that the maximal independent sets of $B_P$ have the same cardinality as $P$, since $P\times \{l\}$ is independent in $B_P$, and every
edge $(p,l)-(p,u)$ is present.

Any set $\xi(L)$ of vertices in $B_P$ is at least of the right cardinality. Let $L$ be a lower ideal of $P$ and assume that $\xi(L)$ is not independent in $B_P$. Then there is an edge $(p,l)-(q,u)$ where $p \geq q$, and $ p \in L$ and $q \not \in L$. This is a contradiction.

Now let $L_1\times \{l\} \cup L_2 \times \{u \}$ be a maximal independent set of $B_P$. Because of the edges $(p,l)-(p,u)$ and independence we have that $L_1 \cap L_2 = \emptyset$, and because of maximality that $L_2 = P \setminus L_1.$ We conclude that $\xi(L_1)=L_1\times \{l\} \cup L_2 \times \{u \}$.
\end{proof}

\begin{theorem}
Let $D$ be a distributive lattice isomorphic to $\mathcal{J}(P)$. Then Hibi's algebra is isomorphic to
$R_{B_P\rightarrow \downspoon}^{\mathtt{top}} / I_{B_P\rightarrow \downspoon}^{\mathtt{top}}.$
\end{theorem}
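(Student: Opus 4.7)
The plan is to build the isomorphism from the bijections already in hand. By Birkhoff's theorem, each $\alpha \in D$ is a lower ideal $L_\alpha$ of $P$, and by Lemma~\ref{lemma:hibify} the map $\alpha \mapsto \xi(L_\alpha)$ identifies $D$ with the set of maximal independent sets of $B_P$. Since the top graded part $R_{B_P \rightarrow \downspoon}^{\mathtt{top}}$ is generated precisely by the variables $r_S$ with $S$ maximal (of size $\alpha(B_P) = |P|$), the assignment $x_\alpha \mapsto r_{\xi(L_\alpha)}$ lifts to a surjective homomorphism of polynomial rings onto $R_{B_P \rightarrow \downspoon}^{\mathtt{top}}$. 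It remains to show that this surjection identifies Hibi's ideal with $I_{B_P \rightarrow \downspoon}^{\mathtt{top}}$.

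One direction is immediate from the multigrading machinery. Under Birkhoff, $\alpha \vee \beta$ and $\alpha \wedge \beta$ correspond to $L_\alpha \cup L_\beta$ and $L_\alpha \cap L_\beta$, so each Hibi relation $x_\alpha x_\beta - x_{\alpha\vee\beta} x_{\alpha\wedge\beta}$ is sent to
\[ r_{\xi(L_\alpha)} r_{\xi(L_\beta)} - r_{\xi(L_\alpha \cup L_\beta)} r_{\xi(L_\alpha \cap L_\beta)}. \]
A one-line comparison of multidegrees at $(p,l)$ and $(p,u)$ using the inclusion-exclusion identity $[p \in L_\alpha] + [p \in L_\beta] = [p \in L_\alpha \cup L_\beta] + [p \in L_\alpha \cap L_\beta]$ (and its dual on the $u$-side), together with Lemma~\ref{lemma:multigrading}, places this binomial in $I_{B_P \rightarrow \downspoon}$, and hence in $I_{B_P \rightarrow \downspoon}^{\mathtt{top}}$.

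The main obstacle is the reverse containment, that these Hibi-type binomials already generate the full ideal $I_{B_P \rightarrow \downspoon}^{\mathtt{top}}$. My strategy mirrors the argument of Theorem~\ref{theorem:restrictMarkovBases}: first observe that if $m \in R_{B_P \rightarrow \downspoon}^{\mathtt{top}}$ has total degree $d$ and $m - m' \in I_{B_P \rightarrow \downspoon}$, then by Lemma~\ref{lemma:multigrading} one has $\sum_v d_v(m') = \sum_v d_v(m) = d|P|$; since each variable contributes at most $|P|$ to this sum and $m'$ has $d$ variables, every variable in $m'$ must correspond to a maximal independent set, so $m' \in R_{B_P \rightarrow \downspoon}^{\mathtt{top}}$. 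Now invoke the Gröbner basis of Theorem~\ref{thm:bipartiteGrobner}: any binomial in $I_{B_P \rightarrow \downspoon}^{\mathtt{top}}$ is reduced to zero by a sequence of Markov steps from this basis, and by the preceding observation every intermediate monomial stays inside $R_{B_P \rightarrow \downspoon}^{\mathtt{top}}$, so only those Gröbner generators $r_{A,B} r_{C,D} - r_{A \cap C, B \cup D} r_{A \cup C, B \cap D}$ with all four variables maximal are ever used. Lemma~\ref{lemma:hibify} then forces $A = L_1 \times \{l\}$, $B = (P \setminus L_1) \times \{u\}$ and likewise for $C, D$, and a brief size check shows the two new variables are automatically maximal as well, so each such restricted Gröbner generator is the image of a Hibi relation under the map. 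This completes the isomorphism.
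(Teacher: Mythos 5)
Your proposal is correct and takes essentially the same route as the paper: invoke Birkhoff's theorem, use Lemma~\ref{lemma:hibify} to identify $D$ with maximal independent sets of $B_P$, and read off the top-graded ideal from the explicit Gr\"obner basis of Theorem~\ref{thm:bipartiteGrobner}. Your version is somewhat more careful than the paper's in one respect: you explicitly justify why restricting the Gr\"obner basis of $I_{B_P\rightarrow\downspoon}$ to $R^{\mathtt{top}}_{B_P\rightarrow\downspoon}$ still generates the top-graded ideal, adapting the mechanism of Theorem~\ref{theorem:restrictMarkovBases} together with the paper's observation (from the remark preceding the definition of the top graded part) that the multigrading keeps Markov neighbors of a top-graded monomial top-graded; the paper leaves this implicit and simply cites Theorem~\ref{thm:bipartiteGrobner}. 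The size check that the two new variables $r_{A\cap C,B\cup D}$ and $r_{A\cup C,B\cap D}$ are again maximal is also a detail the paper does not spell out, and your inclusion-exclusion argument for it is correct.
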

\begin{proof}
Using Birkhoff's theorem we can describe Hibi's algebra as
\[ \mathbf{k}[ r_{L} : L \in \mathcal{J}(P) ] / \langle r_{L_1}r_{L_2} - 
r_{L_1 \cup L_2}r_{L_2 \cap L_2} : L_1,L_2 \in \mathcal{J}(P) \rangle.
\]
According to Theorem~\ref{thm:bipartiteGrobner} characterizing the minimal basis of $I_{B_P\rightarrow \downspoon}$, we have 
that 
\[  R_{B_P\rightarrow \downspoon}^{\mathtt{top}} = 
\mathbf{k}[ r_{S} : S \textrm{ maximal independent in }B_P]
\]
and $I_{B_P\rightarrow \downspoon}^{\mathtt{top}} $ is
\[
\left \langle
\begin{array}{c}
r_{L_1\times \{l\} \cup (P \setminus L_1) \times  \{u\} }  
r_{L_2\times \{l\} \cup (P \setminus L_2) \times  \{u\} }  
 -  \\
r_{(L_1\cup L_2 ) \times \{l\} \cup (P \setminus (L_1 \cup L_2)) \times  \{u\} }  
r_{(L_1\cap L_2 ) \times \{l\} \cup (P \setminus (L_1 \cap L_2)) \times  \{u\} }  
\end{array}
 : L_1,L_2 \in \mathcal{J}(P) 
 \right \rangle
\]
since $(P\setminus L_1) \cap (P\setminus L_2) =  P \setminus (L_1 \cup L_2)$ and $(P\setminus L_1) \cup (P\setminus L_2) =  P \setminus (L_1 \cap L_2)$. Now by the bijection $\xi$ of Lemma~\ref{lemma:hibify} we are done.
\end{proof}

Starting from a distributive poset Hibi defined a binomial ideal using square-free quadratic relations, while we define toric varieties from graph homomorphisms and then prove that the corresponding ideals under certain conditions is generated by square-free quadratic binomials. One can also prove that Hibi's binomial ideal is the kernel of the homomorphism sending indeterminates corresponding to lower ideals to indeterminates corresponding to their elements (made homogenous). Using that result one can realize the ideals of graph homomorphisms from bipartite graphs to $\downspoon$ as kernels of the map from the ring whose indeterminates are the lower ideals in the poset gotten by tilting the bipartite graph horizontally and then complementing the upper part. 

This in a sense tells us that from a toric geometry point of view the study of lower ideals in posets and independent sets in bipartite graphs are almost the same, but there is a richer algebraic structure on the bipartite side since the toric ideals of distributive posets only contain the top-graded information. It would be very interesting to understand for concrete applications of Hibi's poset ideals what the not top-graded part is.

A similar connection from distributive lattices to bipartite graphs, but regarding monomial ideals and Rees algebras, was done by Herzog and Hibi \cite{HH}. One good question is if Rees algebras of monomial ideals associated to ideals of graph homomorphisms could be understood with their methods.

\section{Graph coloring} \label{sec:Colorings}

One of the main objectives of graph theory is to determine the \emph{chromatic number $\chi(G)$} of a graph $G$. This is the smallest number such that for any $n\geq \chi(G)$ there is a graph homomorphism from $G$ to $K_n$. The difficult part is usually not to find colorings, but to obstruct them, providing lower bounds for $\chi(G)$. In the turning this question into algebra with a functorial perspective, it's not uncommon to use test graphs \cite{BabsonKozlov2007}. 

This is the general setup: Say that there would exist a graph homomorphism $G \rightarrow K_n$, and that $\mathbf{A}(G,H)$ is the image under a functor into an algebraic category of the graph homomorphism $G\rightarrow H$. Then for any \emph{test graph} $T$ there would be a morphism from $\mathbf{A}(T,G)$ to $\mathbf{A}(T,K_n)$. Now, the game is that the test graph $T$ should be simple enough to calculate  $\mathbf{A}(T,K_n)$ explicitly, and then by some algebraic obstruction theory, the non-existence of a morphism from $\mathbf{A}(T,G)$ to $\mathbf{A}(T,K_n)$ would imply that there is no graph homomorphism from $G$ to $K_n$ and $\chi(G)> n$.

Applying this idea to ideals of graph homomorphism we need a test graph $T$ with $I_{T \rightarrow K_n}$ explicitly described. In the last example of Section 5 
we noted that the ideal $I_{K_3 \rightarrow K_4}$ is generated by the degree 12 binomial
 \[ 
 \begin{array}{l}
r_{123}r_{214}r_{341}r_{432}r_{231}r_{142}r_{413}r_{324}r_{312}r_{421}r_{134}r_{243} - \\
r_{124}r_{213}r_{342}r_{431}r_{234}r_{143}r_{412}r_{321}r_{314}r_{423}r_{132}r_{241}.
\end{array}
 \]
This shows that $K_3$ is a suitable test graph for four-colorings.
\begin{proposition}\label{theo:graphColoring}
Let $\xi$ be a four-coloring of $G$, that is, a graph homomorphism from $G$ to $K_4$. And let
\[ r_{\phi_1}r_{\phi_2} \cdots r_{\phi_d} -  r_{\phi_1'}r_{\phi_2'} \cdots r_{\phi_d'} \]
be a binomial without common variables in $I_{K_3 \rightarrow G}$. If $d<12$  then there exists
a permutation $\pi \in \mathcal{S}_d$ such that $\xi \circ \phi_i = \xi \circ \phi_{\pi(i)}'$.
\end{proposition}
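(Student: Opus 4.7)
The plan is to push the given binomial forward along $\xi$, land inside $I_{K_3 \to K_4}$, and then exploit the fact that this ideal is generated in degree $12$.

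First I would establish that post-composition with $\xi$ defines a ring homomorphism
\[ \xi_\ast : R_{K_3 \to G} \to R_{K_3 \to K_4}, \qquad r_\phi \mapsto r_{\xi \circ \phi}, \]
which is well defined since $\xi \circ \phi \in \mathrm{Hom}(K_3, K_4)$ by Lemma~\ref{lemma:graphHomomorphismsComposes}. The analogous map on edge rings, $s_\rho \mapsto s_{\xi \circ \rho}$, is compatible with the edge separator maps because $(\xi \circ \phi)|_e = \xi \circ (\phi|_e)$ for every edge $e$ of $K_3$. Hence $\xi_\ast$ carries $\ker \Phi_{K_3 \to G} = I_{K_3 \to G}$ into $\ker \Phi_{K_3 \to K_4} = I_{K_3 \to K_4}$, and applying it to the given binomial yields
\[ r_{\xi \circ \phi_1} \cdots r_{\xi \circ \phi_d} - r_{\xi \circ \phi_1'} \cdots r_{\xi \circ \phi_d'} \in I_{K_3 \to K_4}. \]

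Next I would invoke Example~\ref{ex:K3K4}, according to which $I_{K_3 \to K_4}$ is generated by a single binomial of degree $12$. Any Markov step using this generator replaces a degree-$12$ factor by another degree-$12$ monomial, and so cannot act on a monomial of total degree below $12$. Consequently every binomial of $I_{K_3 \to K_4}$ of degree strictly less than $12$ must be the zero polynomial. Since $d < 12$ by hypothesis, the two monomials $r_{\xi \circ \phi_1} \cdots r_{\xi \circ \phi_d}$ and $r_{\xi \circ \phi_1'} \cdots r_{\xi \circ \phi_d'}$ are equal in $R_{K_3 \to K_4}$, hence coincide as multisets of variables. The required permutation $\pi \in \mathcal{S}_d$ satisfying $\xi \circ \phi_i = \xi \circ \phi'_{\pi(i)}$ is then read off from this multiset equality.

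The only genuine obstacle is the bookkeeping in the first paragraph: verifying that the push-forward by $\xi$ is a bona fide ring homomorphism compatible with the edge separator map, so that it indeed carries $I_{K_3 \to G}$ into $I_{K_3 \to K_4}$. Once that functorial step is in place, the conclusion is forced by the existence of a single explicit degree-$12$ generator; the hypothesis that the original binomial has no common variables plays no role beyond ensuring one is looking at a reduced representative.
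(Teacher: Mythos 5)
Your proposal is correct and follows the same route as the paper's own proof: push the binomial forward along $\xi$ into $I_{K_3 \to K_4}$, observe that this ideal has a single degree-$12$ generator so every nonzero binomial in it has degree at least $12$, and conclude that the two image monomials coincide, which yields the permutation. You spell out the functoriality of $\xi_\ast$ and its compatibility with the edge separator map more carefully than the paper does, but the underlying argument is identical.
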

\begin{proof}
The map $\xi$ induces a homomorphism from $I_{K_3 \rightarrow G}$ to $I_{K_3 \rightarrow K_4}$ by sending $r_\phi$ to $r_{\xi \circ \phi}$.
The ideal $I_{K_3 \rightarrow K_4}$ is generated by one binomial of degree 12, so anything of smaller degree is sent to zero, and this can only be achieved by identifications of variables.
\end{proof}
The following example is included since it's a baby version of the equivariant method employed by Lov\'asz~\cite{LOV} in his proof of the Kneser conjecture. For a contemporary view of Lov\'asz proof we refer to Babson and Kozlov~\cite{BabsonKozlov2007}. We hope that our method can be extended to find the chromatic number of graphs with huge symmetries, where the obstruction to coloring is not local.

\begin{example}
Any four coloring of the one-skeleton of the octahedron has two antipodal vertices of the same color:
Label the vertices of the octahedron graph $O$ as in Figure~\ref{fig:octahedron}. The binomial
$r_{135}r_{146}r_{236}r_{245}-r_{136}r_{145}r_{235}r_{246}$
is in $I_{K_3 \rightarrow O}$. This binomial is of a degree less than 12, and Proposition~\ref{theo:graphColoring} applies. Let's focus on how the permutation $\pi$ will permute $i=1$. There are four different options, and they are in Table~\ref{tab:Ocolorings}. For every value of $\pi(1)$ there is an identification $\boldsymbol{\xi(u)=\xi(v)}$ with $u$ and $v$ antipodal.
\end{example}
\begin{figure}
\center
\includegraphics[scale=0.5]{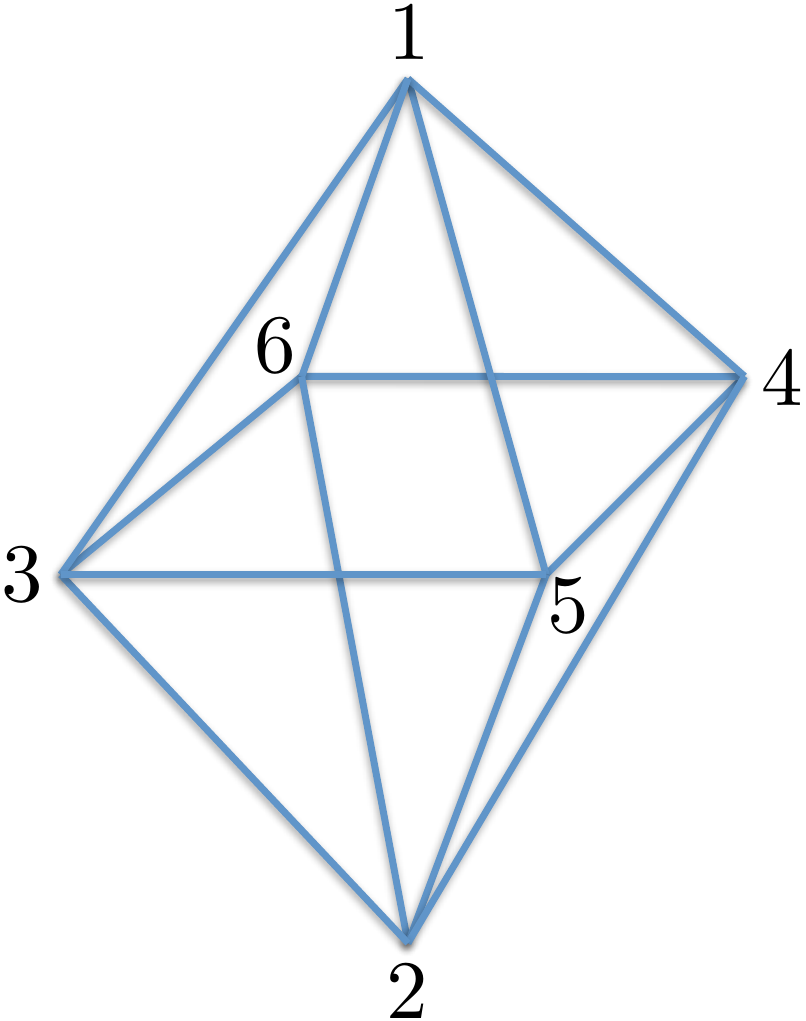}
\caption{The octahedron graph $O$ with labels as in the first example of Section 11.} \label{fig:octahedron}
\end{figure}
\begin{table}
\begin{tabular}{c|ccc}
$\pi(1)$ & \multicolumn{3}{c}{The identifications} \\
\hline
1 & $\xi(1)=\xi(1)$ & $\xi(3)=\xi(3)$ & $\boldsymbol{\xi(5)=\xi(6)}$ \\
2 & $\xi(1)=\xi(1)$ & $\boldsymbol{\xi(3)=\xi(4)}$ & $\xi(5)=\xi(5)$ \\
3 & $\boldsymbol{\xi(1)=\xi(2)}$ & $\xi(3)=\xi(3)$ & $\xi(5)=\xi(5)$ \\
4 & $\boldsymbol{\xi(1)=\xi(2)}$ & $\boldsymbol{\xi(3)=\xi(4)}$ & $\boldsymbol{\xi(5)=\xi(6)}$ \\
\end{tabular}
\caption{The identifications forced by different permutations $\pi$, with antipodal ones are in bold.}\label{tab:Ocolorings}
\end{table}
The smallest graph of chromatic number larger than four is $K_5$, and it is combinatorially trivial to establish this. With this example we demonstrate that the method doesn't break down for the first case.
\begin{example}
The graph $K_5$ is not four-colorable:
Assume to the contrary\linebreak that $K_5$ is four-colorable by a homomorphism $\xi:K_5 \rightarrow K_4$. The binomial\linebreak
$ r_{123}r_{145}r_{325}r_{341}r_{521}r_{543} - r_{125}r_{143}r_{321}r_{345}r_{523}r_{541} $
is in the ideal $I_{K_3 \rightarrow K_5}$. Let $\pi$ be the permutation promised by Proposition~\ref{theo:graphColoring}.
 In Table~\ref{tab:K5colorings} are the forced identifications of colorings for different values of $\pi(1)$. For every value of $\pi(1)$ there is an identification $\boldsymbol{\xi(u)=\xi(v)}$ with $u$ and $v$ adjacent, contradicting that $\xi$ is a graph homomorphism from $K_5$ to $K_4$.
\end{example}
\begin{table}
\begin{tabular}{c|ccc}
$\pi(1)$ & \multicolumn{3}{c}{The identifications} \\
\hline
1 & $\xi(1)=\xi(1)$ & $\xi(2)=\xi(2)$ & $\boldsymbol{\xi(3)=\xi(5)}$ \\
2 & $\xi(1)=\xi(1)$ & $\boldsymbol{\xi(2)=\xi(4)}$ & $\xi(3)=\xi(3)$ \\
3 & $\boldsymbol{\xi(1)=\xi(3)}$ & $\xi(2)=\xi(2)$ & $\boldsymbol{\xi(3)=\xi(1)}$ \\
4 & $\boldsymbol{\xi(1)=\xi(3)}$ & $\boldsymbol{\xi(2)=\xi(4)}$ & $\boldsymbol{\xi(3)=\xi(5)}$ \\
5 & $\boldsymbol{\xi(1)=\xi(5)}$ & $\xi(2)=\xi(2)$ & $\xi(3)=\xi(3)$ \\
6 & $\boldsymbol{\xi(1)=\xi(5)}$ & $\boldsymbol{\xi(2)=\xi(4)}$ & $\boldsymbol{\xi(3)=\xi(1)}$ \\
\end{tabular}
\caption{The identifications forced by different permutations $\pi$, with adjacent ones in bold.}\label{tab:K5colorings}
\end{table}
One interesting aspect of this method, is that given the low-degree binomial it is elementary to check that the proof of $\chi(G)>4$ is correct. In a sense, the binomial is a certificate that can be easily tested and communicated. There are very efficient methods to find certificates for huge pseudo-random graphs where the obstruction is local using other algebraic methods \cite{LOERAetal}. Software like 4ti2 \cite{42} is efficient to find generating sets, but there is nothing off the shelf that only gives binomials up to a certain degree without doing unnecessary calculations. The development of software for finding low degree binomials fast, would enable large scale tests of our method.

\end{document}